\newcommand{\rmk}[1]{{\color{red} #1}}
\numberwithin{equation}{section}
\newtheorem{thm}{Theorem}[section]
\newtheorem{cor}[thm]{Corollary}
\newtheorem{lem}[thm]{Lemma}
\newtheorem{prop}[thm]{Proposition}
\theoremstyle{definition}
\newtheorem{defn}[thm]{Definition}
\theoremstyle{remark}
\newtheorem{remark}[thm]{Remark}
\title{Blow-up of the one-dimensional wave equation with quadratic spatial derivative nonlinearity}
\author{Tej-eddine Ghoul\thanks{New York University Abu Dhabi; Email: teg6@nyu.edu} \and  Jie Liu\thanks{New York University Abu Dhabi; Email: jl15817@nyu.edu} \and Nader Masmoudi \thanks{New York University Abu Dhabi; Email: nm30@nyu.edu} }
\date{}
\begin{document}
	\maketitle

	\begin{abstract}
		We investigate the blow-up dynamics of smooth solutions to the one-dimensional wave equation with a quadratic spatial derivative nonlinearity, motivated by its applications in Effective Field Theory (EFT) in cosmology. Despite its relevance, explicit blow-up solutions for this equation have not been documented in the literature. In this work, we establish the non-existence of smooth, exact self-similar blow-up solutions and construct a five-parameter family of generalized self-similar solutions exhibiting logarithmic growth. Moreover, we prove the asymptotic stability of these blow-up solutions.
		
		Our proof tackles several significant challenges, including the non-self-adjoint nature of the linearized operator, the presence of unstable eigenvalues, and, most notably, the treatment of non-compact perturbations. By substantially advancing Donninger’s spectral-theoretic framework, we develop a robust methodology that effectively handles non-compact perturbations. Key innovations include the incorporation of the Lorentz transformation in self-similar variables, an adaptation of the functional framework introduced in \cite{MRR22}, and a novel resolvent estimate. This approach is general and robust, allowing for straightforward extensions to higher dimensions and applications to a wide range of nonlinear equations.

	\end{abstract}
	
	\section{Introduction}
	
	We consider the following one-dimensional wave equation with quadratic spatial derivative nonlinearity
	\begin{align}\label{eq:nlw}
		u_{tt} - u_{xx} = (u_x)^2,
	\end{align}
	where $u:\mathbb{R}^{1+1}\to \mathbb{R}$ is the unknown function. This equation arises from Effective Field Theory (EFT) applications in cosmology, particularly in studying the Universe’s late-time accelerated expansion and dark energy behavior \cite{eckmann2023instabilities}. In this context, nonlinear terms like $(u_x)^2$ are often used to capture nonlinear evolution in cosmic structures \cite{hassani2022new}. Numerical simulations reveal that such nonlinearities can lead to finite-time blow-up, where the solution diverges in finite time. Without a Laplacian term, this blow-up typically takes the form of a “V-type” singularity, characterized by rapid local curvature increase \cite{eckmann2023instabilities,hassani2022new}. However, when a Laplacian term is added, the divergence shifts to an “M-type” blow-up, where steepened edges form caustic-like structures as they propagate outward \cite{eckmann2023instabilities}. This study of singularity types in different conditions provides essential insights into the limitations of EFT models under extreme cosmic conditions and supports theoretical exploration of high-energy scale evolution in cosmic structures.
	
	Previous studies of Eq. \eqref{eq:nlw} have primarily focused on the lifespan of local solutions and the conditions under which blow-up solutions exist. For investigations in higher-dimensional settings, one may refer to \cite{rammaha1995upper,rammaha1997note,shao2024blow}, while \cite{eckmann2023instabilities,sasaki2023lifespan} addresses the one-dimensional case relevant to our study.  However, to the best of our knowledge, there are no explicit blow-up solutions or comprehensive qualitative results on the blow-up dynamics of Eq. \eqref{eq:nlw} available in the existing literature. Our aim of this paper is to fill this gap.
	\subsection{Main results}
	
	Unlike the wave equation with power nonlinearities \cite{donninger2016blowup,merle2007existence}, Eq. \eqref{eq:nlw} does not admit ODE blow-up solutions. By scaling, the self-similar ansatz for the nonlinear wave equation \eqref{eq:nlw} takes the form
	\begin{align*}
		u(t,x) = U\left(y\right), \quad \ y = \frac{x-x_0}{T-t}, \quad T>0,\ x_0\in \mathbb{R}.
	\end{align*}
	Under this ansatz, exact self-similar solutions satisfy the equation
	\begin{align*}
		2 y\partial_y U + (y^2-1) \partial_{yy}U = (\partial_y U)^2.
	\end{align*}
	In the following, we establish that there are no smooth, exact self-similar blow-up solutions to \eqref{eq:nlw}. Furthermore, no smooth solitary waves exist for \eqref{eq:nlw}. Specifically, all stationary solutions to \eqref{eq:nlw} are of the form
	$$u(x) = -\log(c_1 + x) + c_2, \quad c_1, c_2 \in \mathbb{R},$$
	which becomes singular for  $x \leq -c_1$. This suggests that type-II blow-up solutions may also not exist.
	
	Nonetheless, we construct a new five-parameter family of smooth ``generalized" self-similar blow-up solutions to \eqref{eq:nlw}. These solutions exhibit logarithmic growth and are expressed as
	\begin{align*}
		u(t,x) = U\left(s,y\right) = \alpha s + \tilde{U}(y), \quad s=-\log(T-t)+\log(T), \ y = \frac{x-x_0}{T-t}, \ \alpha \in \mathbb{R}.
	\end{align*}
	The first main result of this paper is as follows.
	
	\begin{thm}[Existence of smooth generalized self-similar blow-up solutions] \label{thm:existence} \quad 
		\begin{enumerate}[1)] 
			\item For any $T>0$ and $x_0\in \mathbb{R}$, there are no smooth exact self-similar blow-up solutions to \eqref{eq:nlw} in the backward lightcone $$\Gamma(T,x_0) :=\left\{(t, x) \in\left[0, T\right) \times \mathbb{R}:\left|x-x_0\right| \le T-t\right\}.$$
			\item There exists a five-parameter family of smooth generalized self-similar blow-up solutions to \eqref{eq:nlw} with logarithmic growth
			\begin{align*}
				u_{\alpha,\beta,\kappa,T,x_0} = -\alpha\log\left(1-\frac{t}{T}\right) + \tilde{u}_{\alpha,\beta,\kappa,T,x_0}(t,x) 
			\end{align*}
			well defined in the backward lightcone $\Gamma(T,x_0)$ for all $T>0, \kappa, x_0\in \mathbb{R},$ and either $ 0< \beta< \infty, \sqrt{1+\alpha}\in \mathbb{N}, \alpha\neq 0$ or $\beta=0, \infty, \alpha>0.$
			Here, $\tilde{u}_{\alpha,\beta,\kappa,T,x_0}$ is given by
			\begin{align*}
				&\tilde{u}_{\alpha,\beta,\kappa,T,x_0}(t,x) \\
				=&  \kappa -\alpha\log \sqrt{1+\alpha}+\int_0^{\frac{x-x_0}{T-t}} \frac{\alpha(1+z)^{\sqrt{1+\alpha}} - 2 \alpha^2\sqrt{1+\alpha}\beta(1-z)^{\sqrt{1+\alpha}}}{(1+z)^{\sqrt{1+\alpha}}(\sqrt{1+\alpha} -z) + 2 \alpha\sqrt{1+\alpha}\beta(1-z)^{\sqrt{1+\alpha}}(\sqrt{1+\alpha}+z)} dz.
			\end{align*}		
			In particular, we have
			\begin{align*}
				u_{\alpha,\infty,\kappa,T,x_0} =-\alpha\log\left(1-\frac{t}{T}\right) -\alpha\log\left(\sqrt{1+\alpha}+\frac{x-x_0}{T-t}\right) + \kappa,\\
				u_{\alpha,0,\kappa,T,x_0} =-\alpha\log\left(1-\frac{t}{T}\right) -\alpha\log\left(\sqrt{1+\alpha}-\frac{x-x_0}{T-t}\right) + \kappa.
			\end{align*}
		\end{enumerate}   
	\end{thm}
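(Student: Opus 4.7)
The plan is to reduce both statements to the analysis of a first-order ODE for $V := \tilde U'(y)$ obtained by substituting the respective self-similar ansatz into \eqref{eq:nlw}.

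\emph{Part 1 (non-existence).} From the ODE $2yU' + (y^2-1)U'' = (U')^2$ recorded in the excerpt, setting $V := U'$ gives $(y^2-1)V' + 2yV = V^2$. The left side equals $\bigl[(y^2-1)V\bigr]'$, so the substitution $w := (y^2-1)V$ converts the equation into the separable form $w' = w^2/(y^2-1)^2$. Integration by partial fractions yields
\begin{equation*}
-\frac{1}{w(y)} = -\frac{y}{2(y^2-1)} + \frac{1}{4}\log\!\left|\frac{y+1}{y-1}\right| + C.
\end{equation*}
I would then show that for every constant $C$, the product $(y^2-1)\cdot[\text{RHS}]$ is continuous at $y = \pm 1$ but fails to be $C^1$ there, because the term $(y^2-1)\log|y \mp 1|$ behaves like $(y-1)\log|y-1|$ near $y = 1$ and its derivative diverges logarithmically. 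Consequently $V = w/(y^2-1) = -1/\{(y^2-1)[\text{RHS}]\}$ is non-smooth at $y = \pm 1$ for every nontrivial choice of $C$. The only smooth branch is $w \equiv 0$, giving $U \equiv \text{const}$, which is not a blow-up profile; this completes part 1.

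\emph{Part 2 (existence).} Substituting $u(t,x) = \alpha s + \tilde U(y)$ with $s = -\log(T-t) + \log T$ reduces \eqref{eq:nlw} to the Riccati equation
\begin{equation*}
(y^2-1)V' + 2yV + \alpha = V^2, \qquad V = \tilde U'.
\end{equation*}
I would first guess two explicit particular solutions of the form $V = c_1/(c_2 + y)$ and verify $V_0 = -\alpha/(a+y)$ and $V_1 = \alpha/(a-y)$ with $a := \sqrt{1+\alpha}$, using $a^2 - 1 = \alpha$ to close the algebra. The general solution follows from the classical two-solution identity for Riccati,
\begin{equation*}
\frac{V - V_0}{V - V_1} = \mathrm{const}\cdot \exp\!\left(\int \frac{V_0 - V_1}{y^2 - 1}\, dy\right),
\end{equation*}
whose integrand evaluates via partial fractions (using $\alpha = a^2-1$) to a multiple of $1/(y^2-1) + 1/(a^2-y^2)$. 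Exponentiating produces the factor $(1+y)^a(a-y)/[(1-y)^a(a+y)]$; parameterising the free constant as a suitable multiple of $-1/\beta$ and solving algebraically for $V$ yields
\begin{equation*}
V(y) = \frac{\alpha(1+y)^a - 2\alpha^2 a \beta(1-y)^a}{(1+y)^a(a-y) + 2\alpha a \beta(1-y)^a(a+y)}.
\end{equation*}
Integrating in $y$ from $0$ and adding the constant $\kappa - \alpha\log\sqrt{1+\alpha}$ produces $\tilde u_{\alpha,\beta,\kappa,T,x_0}$; the limits $\beta \to 0$ and $\beta \to \infty$ recover the two explicit logarithmic profiles, and the parameters $T$ and $x_0$ enter through the self-similar variables, so five parameters are accounted for.

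The remaining and most delicate step is to identify the parameter values that yield a profile smooth on the closed interval $[-1,1]$. The factors $(1\pm y)^a$ are smooth there only if $a \in \mathbb{N}$ or the corresponding factor drops out. In the endpoint cases $\beta \in \{0,\infty\}$ only one of the two factors survives, and $V$ reduces to $\pm\alpha/(a\mp y)$, which is smooth on $[-1,1]$ precisely when $a > 1$, i.e., $\alpha > 0$. For $\beta \in (0,\infty)$ both fractional-power factors persist, so smoothness forces $a \in \mathbb{N}$; the value $a = 1$ is excluded by part 1. For $a \in \mathbb{N}$ with $a \geq 2$ the formula is rational, and smoothness reduces to nonvanishing of the denominator, which follows from strict positivity of each summand on $(-1,1)$ together with strictly positive boundary values at $y = \pm 1$ when $\alpha, \beta > 0$. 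The main obstacle I anticipate is rigorously ruling out hidden cancellations between the two fractional-power summands when $a \notin \mathbb{N}$; a Frobenius analysis of the linear second-order equation $(y^2-1)^2 w'' + 4y(y^2-1)w' = \alpha w$, obtained from the substitution $V = -(y^2-1)w'/w$, at the regular singular points $y = \pm 1$ with indicial exponents $(-1 \pm a)/2$, should cleanly exclude such cancellations and complete the classification.
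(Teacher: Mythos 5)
Your proposal is correct, and for part 2 it is essentially the paper's own construction: the paper also reduces the generalized ansatz to the Riccati equation $(y^2-1)\tilde U'' + 2y\tilde U' + \alpha = (\tilde U')^2$, exploits the explicit particular solutions $-\alpha\log(\sqrt{1+\alpha}\pm y)$, and writes down the full solution family (the paper via the standard one-particular-solution reduction for $W=\tilde U+\alpha\log(\sqrt{1+\alpha}+y)$, you via the two-solution cross-ratio identity; the resulting one-parameter family in $\beta$ is the same), then classifies smoothness exactly as you do: $\beta\in\{0,\infty\}$ gives $\mp\alpha\log(\sqrt{1+\alpha}\pm y)$, smooth for $\alpha>0$, while $0<\beta<\infty$ requires $\sqrt{1+\alpha}\in\mathbb{N}$, in which case numerator and denominator are polynomials and the denominator is strictly positive on $[-1,1]$. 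One remark: the converse you single out as the ``main obstacle'' (ruling out smoothness when $\sqrt{1+\alpha}\notin\mathbb{N}$ and $0<\beta<\infty$) is not asserted in the theorem at all — it appears only in a remark — so no Frobenius analysis is needed to complete the stated result.

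The only genuine divergence is in part 1. After the same explicit integration, the paper simply notes that the denominator $p_c(y)=c(y^2-1)+\tfrac14\bigl(2y+(y^2-1)\log\bigl|\tfrac{y-1}{y+1}\bigr|\bigr)$ takes the values $-\tfrac12$ and $\tfrac12$ at $y=-1$ and $y=1$, so by the intermediate value theorem it vanishes at some $y_0\in(-1,1)$: every nontrivial exact self-similar profile is singular \emph{strictly inside} the light cone. This is both simpler and stronger than your endpoint argument (it also excludes profiles that are smooth merely in the open cone, which your boundary-regularity argument does not). Your route does prove the statement for the closed cone, but to be airtight you should make explicit two steps you leave implicit: (i) any hypothetical smooth solution on $[-1,1]$ with $U'\not\equiv 0$ coincides on all of $(-1,1)$ with your formula for a single constant $C$ (interior uniqueness for $w'=w^2/(y^2-1)^2$, then continuity up to the endpoints); and (ii) the quantity $(y^2-1)\cdot[\mathrm{RHS}]$ equals $\mp\tfrac12\neq 0$ at $y=\pm1$, so its failure to be $C^1$ there indeed transfers to $V=-1/\{(y^2-1)[\mathrm{RHS}]\}$ and hence contradicts smoothness of $U$ up to the boundary.
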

	
	\begin{remark}[Other choices of parameters]
		For $\alpha< 0$, $u_{\alpha,\beta,\kappa,T,x_0}$ also solves Eq. \eqref{eq:nlw}. However, there is always a singular point inside the light cone, regardless the choice of $\beta$. Thus, these solutions does not influence the evolution of smooth solutions to Eq. \eqref{eq:nlw} due to the propagation of regularity of wave equations. And we require $\beta\ge 0$ for the same reason. Moreover, for $0<\beta<\infty$ and $\sqrt{1+\alpha}\notin \mathbb{N}$, $u_{\alpha,\beta,\kappa,T,x_0}$ does not belong to $C^k$ for $k> \sqrt{1+\alpha}$.
	\end{remark}
	\begin{remark}[Global-in-space smooth blow-up solutions]
		By the finite propagation speed of wave equations and a cut-off of $u_{\alpha,\beta,\kappa,T,x_0}$, we can also construct global in space smooth blow-up solutions to Eq. \eqref{eq:nlw}. 
	\end{remark}
	\begin{remark}[Free parameters and symmetries]
		Parameters $\kappa, T, x_0$ corresponds to different symmetries of the equation \eqref{eq:nlw}. i.e. translation invariance in profile $u$, time $t$, and space $x$.
	\end{remark}
	\begin{remark}[Interpretation as exact self-similar blow-up solutions]
		The generalized self-similar blow-up solutions above can be regarded as \textit{exact} self-similar blow-up solutions to a transformed version of \eqref{eq:nlw} obtained by taking the spatial derivative, see Section \ref{subsec:interpretation}. This perspective provides a more natural interpretation of the generalized self-similar blow-up solutions and elucidates the origin of the logarithmic growth. For the same reason, we believe that these generalized self-similar blow-up solutions give the generic blow-up dynamics.
	\end{remark}
	\begin{remark}[Comparison with numerical simulations]
		The logarithmic growth exhibited by the profile  $u_{\alpha,\beta,\kappa,T,x_0}$  has not been observed in prior numerical simulations \cite{eckmann2023instabilities}. While numerical methods are invaluable tools, they may sometimes overlook subtle aspects of blow-up solutions. In particular, numerical observations of M-type blow-up typically capture the blow-up behavior at the first blow-up time  $T$. Building on this, we conjecture that such behavior could correspond to a linear combination of finitely many generalized blow-up solutions, with the parameter $ \alpha$  approaching  0  as $ t \to T$. One can refer to \cite{merle2007existence,merle2012existence,merle2012isolatedness} for a similar phenomenon observed in one-dimensional semilinear nonlinear wave equations with power nonlinearities.
	\end{remark}
	
	Let $k_{\alpha} = \lceil \sqrt{1+\alpha}\rceil$ denote the smallest integer that is greater than or equal to $\sqrt{1+\alpha}$. The second main result of this paper establishes the asymptotic stability of the generalized self-similar blow-up solutions $u_{\alpha,\infty,\kappa,T,x_0}$.  
	\begin{thm}\label{thm:main}
		Let $\alpha_0,T_0>0, \kappa_0, x_0\in \mathbb{R}$, $k\ge k_{\alpha_0}+3$. For all $0<\delta<1$, there exists a constant $\epsilon>0$ such that for any real-valued functions $(f, g)\in H^{k+1}(\mathbb{R})\times H^k(\mathbb{R})$ with 
		$$\|(f, g)\|_{H^{k+1}(\mathbb{R})\times H^k(\mathbb{R})} \le \epsilon $$
		there exist parameters $\alpha^*, T^*>0, \kappa^*\in \mathbb{R}$, and a unique solution $u: \Gamma(T^*, x_0)\mapsto \mathbb{R}$ of Eq. \eqref{eq:nlw} with initial data
		$$ u(0,x) = u_{\alpha_0,\infty,\kappa_0,T_0,x_0}(0,x) + f(x), \quad \partial_t u(0,x) = \partial_t u_{\alpha_0,\infty,\kappa_0,T_0,x_0}(0,x) + g(x), \quad x\in B_{T^*}(x_0), $$
		such that the bounds
		\begin{align*}
			&(T^*-t)^{-\frac{1}{2} + s} \| u(t,\cdot) - u_{\alpha^*, \infty, \kappa^*, T^*, x_0}\|_{\dot{H}^{s}(B_{T^*-t}(x_0))} \lesssim (T^*-t)^{1-\delta} ,
		\end{align*}
		for $s=0,1,\ldots,k+1$, and 
		\begin{align*}    
			(T^*-t)^{-\frac{1}{2} + s} \| \partial_t u(t,\cdot) - \partial_t u_{\alpha^*, \infty, \kappa^*, T^*, x_0}\|_{ \dot{H}^{s-1}(B_{T^*-t}(x_0))} \lesssim (T^*-t)^{1-\delta} ,
		\end{align*}
		for $s=1,\ldots,k+1$, hold for all $0\le t < T^*.$ Moreover, 
		\begin{align*}
			|\alpha^*-\alpha_0| + |\kappa^* -\kappa_0| + \left|\frac{T^*}{T_0}-1\right| \lesssim \epsilon.
		\end{align*}
	\end{thm}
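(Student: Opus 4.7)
The plan is to recast the problem in self-similar variables, linearize around the profile $u_{\alpha_0,\infty,\kappa_0,T_0,x_0}$, and run a modulation/bootstrap argument in which the three free parameters $(\alpha^*, T^*, \kappa^*)$ absorb the unstable directions of the linearized flow while the perturbation decays on the complementary stable subspace. Concretely, I would pass to $s = -\log(T-t)+\log T$ and $y = (x-x_0)/(T-t)$, work with the pair $(u,\partial_t u)$ on the rescaled backward lightcone $y \in [-1,1]$ (where the profile becomes stationary modulo the additive $\alpha s$), and derive an evolution equation $\partial_s v = \mathcal{L} v + \mathcal{N}(v)$ for the perturbation $v$, with $\mathcal{L}$ the linearization and $\mathcal{N}$ the quadratic remainder.

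Next I would analyze the spectrum of $\mathcal{L}$. Differentiating the family $u_{\alpha,\infty,\kappa,T,x_0}$ in $\alpha$, $\kappa$ and $T$ yields three explicit eigenfunctions of $\mathcal{L}$ with non-negative real part, and I would show that these are the only elements of $\sigma(\mathcal{L})$ with $\operatorname{Re}\lambda \ge -\omega$ for some $\omega > 0$, together with a resolvent estimate $\|(\lambda - \mathcal{L})^{-1}\| \lesssim 1$ on $\{\operatorname{Re}\lambda \ge -\omega\}$ away from those three eigenvalues. Because $\mathcal{L}$ is non-self-adjoint and contains a first-order perturbation of the self-similar transport operator (a consequence of the $(u_x)^2$ nonlinearity) that is genuinely non-compact, I would first symmetrize the principal part via a Lorentz transformation in $y$ and then adapt the weighted Hilbert-space framework of \cite{MRR22}, so that weighted energy identities produce both dissipativity on the stable subspace and the required resolvent bound.

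I would then close the nonlinear argument. Writing $u = u_{\alpha(s),\infty,\kappa(s),T(s),x_0} + v$ with $v(s)$ constrained to the stable spectral projection, I would derive modulation ODEs for $(\alpha,\kappa,T)$ from the orthogonality conditions and run a bootstrap on $v$ in the Sobolev norms appearing in the statement using Duhamel's formula together with the semigroup bound on $e^{s\mathcal{L}}$ restricted to the stable subspace, which yields the announced $(T^*-t)^{1-\delta}$ decay. The correct final blow-up time $T^*$ is selected by a Brouwer-type topological argument in the one genuinely unstable direction (shifts in $T$) after the neutral modes tied to $\alpha$ and $\kappa$ have been removed by modulation; the same argument yields the parameter estimate $|\alpha^*-\alpha_0|+|\kappa^*-\kappa_0|+|T^*/T_0-1| \lesssim \epsilon$.

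The hard part will be the resolvent/semigroup estimate on the stable subspace. The first-order nature of the nonlinearity forces $\mathcal{L}$ to be a non-compact perturbation of a hyperbolic operator with characteristic boundary points at $y = \pm 1$, so neither a standard compact-perturbation argument nor Donninger's original framework applies directly. Extracting a quantitative resolvent bound uniform up to the spectral gap $\omega$, and then transferring the resulting exponential semigroup decay back through Sobolev norms on the physical backward lightcone $B_{T^*-t}(x_0)$ --- precisely the content of the novel resolvent estimate and Lorentz-adapted framework advertised in the abstract --- is the technical heart of the proof, and all the other steps (modulation, bootstrap, topological selection of $T^*$) are expected to follow along by now-standard lines once that estimate is in hand.
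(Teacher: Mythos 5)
Your overall architecture (self-similar variables, a spectral gap with unstable directions tied to the symmetries, semigroup decay on a stable subspace, Duhamel plus a topological choice of parameters) is the same as the paper's, but two of your steps have genuine gaps. First, the spectral structure is mis-stated: differentiating the family in $\kappa$ and $T$ does give eigenfunctions at $\lambda=0$ and $\lambda=1$, but $\partial_\alpha u_{\alpha,\infty,\kappa,T,x_0}$ is \emph{not} a third eigenfunction; it is a generalized eigenfunction at $0$, i.e. $\mathbf{L}_\alpha\mathbf{g}_{0,\alpha}=\mathbf{f}_{0,\alpha}$, so the Riesz projection at $0$ has rank $2$ with a nontrivial Jordan block and the linearized flow on its range grows like $s$ (Propositions \ref{prop-spectrum}, \ref{prop-growth-bound} and Lemma \ref{lem:rank}). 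Treating the $\alpha,\kappa$ directions as "neutral modes removed by modulation" and reserving the topological argument for $T$ alone therefore does not match the actual linear dynamics; the paper kills all three directions at once by a Lyapunov--Perron correction term and a Brouwer fixed point in $(\alpha^*,\kappa^*,T^*)$ jointly, after expanding the data operator to quadratic order in the parameters (Lemma \ref{lem-initial}, Propositions \ref{prop-modified-existence}, \ref{prop-main}). Second, and more importantly, the step you yourself flag as the heart of the proof is only asserted. The route you propose (Lorentz symmetrization of the principal part, then weighted energy identities) is not what the paper does and is itself doubtful: the Lorentz change of variables in self-similar coordinates conjugates the eigenvalue problem through the $\lambda$-dependent weight $\bigl((1-\gamma y')/\sqrt{1-\gamma^2}\bigr)^{-\lambda}$, whose derivatives grow polynomially in $|\lambda|$, so a resolvent bound uniform in $\operatorname{Im}\lambda$ in $\mathcal{H}^k$ does not transfer back; the paper uses that transformation only for mode stability, converting the Heun-type eigen-equation into a hypergeometric one and solving the connection problem. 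The linear estimate that actually closes the argument consists of (i) a higher-order dissipation estimate in $H^{k+1}\times H^k$ with $k\ge k_\alpha+2$, where the gain $k-\sqrt{1+\alpha}-\tfrac12$ beats the size of the non-compact derivative potential, (ii) an MRR22-type subcoercivity lemma turning the lower-order remainder into a finite-rank projection, yielding $\mathbf{L}_\alpha=\hat{\mathbf{L}}_\alpha+\hat{\mathbf{P}}_\alpha$ with $\hat{\mathbf{L}}_\alpha$ maximally dissipative, and (iii) a direct resolvent bound obtained by separating the real and imaginary parts of $\langle(\lambda-\mathbf{L}_\alpha)\mathbf{q},\mathbf{q}\rangle$, fed into Gearhart--Pr\"uss--Greiner. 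Without some version of (i)--(iii) your plan does not close precisely at the point where compact-perturbation and soft semigroup arguments fail.

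On the nonlinear step, a modulation-plus-bootstrap scheme is a legitimate alternative to the paper's Lyapunov--Perron construction, but as written it would have to be reworked to accommodate the Jordan block (the generalized $0$-mode is secularly growing, not neutral, and couples the $\alpha$- and $\kappa$-modulation equations), and your mode-stability input (that $0$ and $1$ exhaust the spectrum in $\{\operatorname{Re}\lambda\ge-\omega\}$) still requires the connection-problem analysis that the Lorentz transformation is designed for in the paper, together with control of the essential spectrum, which is exactly what the decomposition $\mathbf{L}_\alpha=\hat{\mathbf{L}}_\alpha+\hat{\mathbf{P}}_\alpha$ provides.
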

	\begin{remark}
		The same result also holds for $u_{\alpha,0,\kappa,T,x_0}$. Indeed, we have 
		$$u_{\alpha,0,\kappa,T,x_0}(t,x) = u_{\alpha,\infty,\kappa,T,-x_0}(t,-x),$$ 
		and any solution $u(t,x)$ to Eq. \eqref{eq:nlw} implies that $u(t,-x)$ also solves Eq. \eqref{eq:nlw}. 
	\end{remark}
	\begin{remark}
		For $0<\beta<\infty$ and $\sqrt{1+\alpha}\in \mathbb{N}, \alpha>0$, we conjecture that $u_{\alpha,\beta,\kappa,T,x_0}$ are only co-dimensionally stable. For instance, when $\alpha=3, \beta = \frac{1}{12}$, the following explicit solution 
		$$ u_{3,\frac{1}{12},\kappa,T,x_0} = -3\log\left(1-\frac{t}{T}\right) + \frac{3}{2}\left(\frac{x-x_0}{T-t}\right)^2 +\kappa $$
		solves Eq. \eqref{eq:nlw}. However, analysis of the linearized equation of \eqref{eq:nlw} around $u_{3,\frac{1}{12},\kappa,T,x_0}$ reveals the presence of two additional unstable eigenvalues, i.e. eigenvalues with positive real part, which not associated with any symmetries of the equation. Consequently, we expect that,  for $0<\beta<\infty$, perturbations around $u_{\alpha,\beta,\kappa,T,x_0}$ would always converge to either $u_{\alpha,0,\kappa,T,x_0}$ or $u_{\alpha,\infty,\kappa,T,x_0}$. In other words, $u_{\alpha,0,\kappa,T,x_0}$ and $u_{\alpha,\infty,\kappa,T,x_0}$ serve as two stable attractors for the dynamics.
	\end{remark}

	\subsection{Related work} Over the past two decades, significant progress has been made in the blow-up theory of nonlinear wave equations, particularly for semilinear wave equations with power-type nonlinearities.
	
	For one-dimensional nonlinear wave equations with power nonlinearities, the blow-up theory has been comprehensively developed through a series of works by Merle and Zaag \cite{merle2003determination,merle2007existence,merle2008openness,merle2012existence,merle2012isolatedness}, as well as Côte and Zaag \cite{cote2013construction}. In higher dimensions with sub-conformal nonlinearities, the blow-up rate has been established \cite{merle2003determination,merle2005determination}, and the stability of the one-dimensional analogous self-similar solutions has been proven by Merle and Zaag \cite{merle2015stability,merle2016dynamics}. A different approach to stability is also presented in \cite{donninger2012stable}. 
	
	However, in the super-conformal case, the understanding remains limited. In a series of studies \cite{donninger2014stable,donninger2016blowup,donninger2017stable,donninger2017strichartz,donninger2020blowup,wallauch2023strichartz,ostermann2024stable}, Donninger and collaborators applied spectral-theoretic methods to establish the stability of ODE blow-up solutions, i.e., solutions of the associated ODE  $u^{\prime \prime}=u^p$. This method has also been extended to demonstrate the stability of other self-similar blow-up solutions for supercritical nonlinear wave equations \cite{glogic2020threshold,glogic2021co,csobo2024blowup,chen2024co}. Additionally, noteworthy contributions include the work of Dai and Duyckaerts \cite{dai2021self}, Duyckaerts and Negro \cite{duyckaerts2024global}, and Kim \cite{kim2022self}, which focus on the construction of self-similar solutions. In \cite{kim2022self}, Kim also established the co-dimensional stability of these self-similar profiles using the functional framework proposed in \cite{MRR22}.
	 
	For self-similar blow-up in other wave-type equations, relevant results can be found in studies of wave maps \cite{donninger2011stable,costin2016proof,donninger2023optimal}, the Yang-Mills equation \cite{costin2016stability}, and the Skyrme model \cite{chen2023singularity,mcnulty2024singularity}.
	
	For energy-critical and supercritical nonlinear wave equations, another type of blow-up solution, known as Type-II blow-up, arises. Comprehensive surveys on Type-II blow-up can be found in \cite{krieger2009slow,krieger2014full,krieger2020stability,hillairet2012smooth,collot2018type,duyckaerts2012universality,ghoul2018construction} and the references therein.
	
	

	\subsection{Outline of the proof}
	We outline the key difficulties and new ideas involved in the proof of Theorem \ref{thm:main}. The stability analysis of the generalized self-similar blow-up solutions presents three primary challenges that significantly complicate the argument.
	\begin{enumerate}
		\item \textbf{Non-existence of dissipative energy functionals} $\Longrightarrow$ \textit{direct energy methods fail}. In their seminal works \cite{merle2007existence,merle2015stability,merle2016dynamics} on the stability of self-similar solutions to sub-conformal nonlinear wave equations, Merle and Zaag rely heavily on the existence of a Lyapunov energy functional that satisfies the following dissipative property:
		\begin{align*}
			\frac{d E[\eta]}{ds} = -\frac{4}{p-1} \int_{-1}^1 (\partial_s \eta)^2 \frac{\rho}{1-y^2} dy \le0,
		\end{align*}
		where $E[\eta]$ is an appropriately chosen Lyapunov functional, $p>1$, and $\rho\ge 0$ is a weight function. However, in our setting, such a Lyapunov structure does not exist due to the non-Hamiltonian nature of the nonlinearity. Specifically, the natural energy estimate for the linearized equation \eqref{linear-ss}  around $U_{\alpha, \infty,\kappa}$ yields 
		\begin{align*}
			\frac{d E[\eta]}{ds} = \int_{-1}^1 \frac{2\alpha y}{\sqrt{1+\alpha} + y }(\partial_s \eta)^2 \frac{\rho}{1-y^2} dy,
		\end{align*}
		where the term  $2\alpha y$ has indefinite signs on the interval $[-1, 1]$, thus preventing the energy from dissipating. This fundamental difference renders direct energy methods ineffective in our analysis.
		
		\item \textbf{Non-self-adjoint structure of the linearized operator with unstable eigenvalues $0$ and $1$} $\Longrightarrow$ \textit{traditional spectral methods fail}. The stability analysis of self-similar blow-up solutions often involves non-self-adjoint operators with unstable eigenvalues, which arise naturally from the symmetries of the equation. This poses significant challenges, as discussed in \cite{merle2007existence,donninger2011stable,donninger2024spectral}. In our case, the linearized operator has unstable eigenvalues 0 and 1, making traditional spectral methods ineffective. This limitation arises because the spectral decomposition for non-self-adjoint operators is inherently more complex and lacks the orthogonality properties of self-adjoint operators, preventing the clear separation of stable and unstable modes needed for decay estimates. Furthermore, this difficulty explains the lack of dissipative energy functionals, as noted earlier. The unstable components of the energy cannot be effectively isolated, which prevents energy functionals from exhibiting dissipative properties.


		\item \textbf{Non-compact perturbations} $\Longrightarrow$ \textit{Losing information about the essential spectrum}. Compared to nonlinear wave equations with power nonlinearities, an additional difficulty in our setting arises from the non-compact perturbations introduced by the nonlinear term. Specifically, the linearized operator can be expressed as		
		\begin{align*}
			{\mathbf{L}}_{\alpha}=\mathbf{L} + \mathbf{L}_{\alpha,1}, \quad \mathbf{L}_{\alpha,1}\begin{pmatrix} q_1\\ q_2 \end{pmatrix}= \begin{pmatrix} 0\\ 2 \partial_y U_{\alpha,\infty,\kappa} \partial_y q_1 \end{pmatrix},
		\end{align*}
		where 
		\begin{align*}
			\mathbf{L} \mathbf{q} := \begin{pmatrix}
				-y\partial_y q_1+ q_2 \\
				\partial_{yy} q_1 -q_2-y\partial_y q_2
			\end{pmatrix}
		\end{align*}
		 is the free wave operator and $\mathbf{L}_{\alpha,1}$ is the linearized term arising from the nonlinearity, see \eqref{linear-ss-system}. Notably, in our setting, $\mathbf{L}_{\alpha,1}$ constitutes only a bounded perturbation in $H^{k+1} \times H^k$, yet it lacks compactness relative to $\mathbf{L}$ due to the presence of the derivative term and the degeneracy of $\mathbf{L}$ on the light cone $y = \pm 1$. Specifically, if $\mathbf{q}$ and $\mathbf{L} \mathbf{q}$ are bounded in $H^{k+1} \times H^k$, the definition only allows one to deduce $(y^2 - 1)\partial_{yy} q_1 \in H^k$, rather than $\partial_{yy} q_1 \in H^k$. This implies that no additional regularity for $\partial_y q_1$ can be gained at $y = \pm 1$. Consequently, $\mathbf{L}_{\alpha,1}$ is not relatively compact with respect to $\mathbf{L}$. This degeneracy is also evident from the scalar form of the linearized equation \eqref{linear-ss}, which highlights the singular behavior of the free wave operator at $y = \pm 1$. Additionally, unlike the Skyrme model studied in \cite{chen2023singularity}, our problem lacks structural properties that could eliminate the derivative term. 
		 
		 This lack of compactness results in a complete loss of information about the essential spectrum, as the essential spectrum of the linearized operator is no longer preserved under non-compact perturbations \cite{kato1995}, which in turn leads to the breakdown of Donninger’s standard spectral-theoretic framework. Even if spectral information were accessible, a significant challenge would remain in translating it into growth bounds for the corresponding semigroup. As highlighted in \cite{chen2023singularity}, none of the soft arguments commonly used in previous studies can be applied to the non-compact case.
	\end{enumerate}
	
	To address these challenges, we advance Donninger’s spectral-theoretic framework for non-self-adjoint operators, originally developed by Donninger for wave maps \cite{donninger2011stable,donninger2012wavemap}. For an overview of this framework, we refer to the introductory note \cite{donninger2024spectral}. To  illustrate, we briefly outline the main steps of Donninger’s approach below. Consider an abstract Cauchy problem written in the operator form 
	\begin{align*}
		\partial_s \mathbf{q} = \mathbf{L}_\alpha \mathbf{q} + \mathbf{N}(\mathbf{q}), \quad \mathbf{q}(0,\cdot)  = \mathbf{q}_0, 
	\end{align*}
	where $\mathbf{L}_\alpha$ is the linearized operator and $\mathbf{N}(\mathbf{q})$ represents the nonlinear term.
	\begin{itemize}
		\item[Step 1] Establish the mode stability of the linearized operator by excluding all unstable eigenvalues except $\{0,1\}$. This reduces to solving the eigenvalue problem  $\mathbf{L}_{\alpha} \mathbf{q} = \lambda \mathbf{q}$, $\operatorname{Re}\lambda \ge 0$;
		\item[Step 2] Prove the generation of a semigroup $\mathbf{S}_{\alpha}(s)$ associated with the linearized operator $\mathbf{L}_\alpha$ and characterize its essential spectrum. The key idea is to decompose the linearized operator into a combination of a maximally dissipative operator (i.e., the free wave operator) and a compact perturbation, given by ${\mathbf{L}}_{\alpha}=\mathbf{L} + \mathbf{L}_{\alpha,1}$;
		\item[Step 3] Establish linear stability within the stable subspace. This is achieved by isolating the unstable modes using the Riesz projection $\mathbf{P}_{\lambda,\alpha} :=\frac{1}{2 \pi \mathrm{i}} \int_{\gamma_{\lambda}} \mathbf{R}_{\mathbf{L}_\alpha}(z) d z$ where  $\gamma_{\lambda}$ is a suitable contour for $\lambda= 0,1$.
		The growth bound of the semigroup is then derived using uniform resolvent estimates;
		\item[Step 4] Prove nonlinear stability by applying modulation techniques or the Lyapunov-Perron method, based on the following semigroup formulation
		\begin{align*}
			\mathbf{q}(s,\cdot) = \mathbf{S}_\alpha(s)\mathbf{q}(0,\cdot) + \int_0^{s} \mathbf{S}_\alpha(s-\tau) N(\mathbf{q}(\tau)) d\tau.
		\end{align*}
	\end{itemize}
	This spectral-theoretic framework is highly effective for analyzing non-self-adjoint operators with unstable eigenvalues and has been successfully applied to prove the stability of ODE blow-up solutions for nonlinear wave equations with power nonlinearities \cite{donninger2014stable,donninger2016blowup,wallauch2023strichartz,ostermann2024stable}. However, several new challenges arise in Steps 1–3 within our setting. Specifically,
	\begin{itemize}
		\item In Step 1, the eigen-equation for the eigenvalue $\lambda$ is given by
		\begin{align}\label{eigen-1-intro}
			(\lambda^2+\lambda )\varphi + \left((2\lambda+2) y+ \frac{2\alpha}{\sqrt{1+\alpha}+y} \right)\partial_y \varphi  +(y^2-1)\partial_{yy} \varphi =0,
		\end{align}    
		which is a Heun type ODE with four regular singular points at $\pm 1, -\sqrt{1+\alpha},\infty$. The existence of smooth solutions to such ODEs remains widely open, commonly referred to as the ``connection problem" \cite{donninger2024spectral}. In contrast, in the context of ODE blow-up stability  \cite{donninger2014stable,donninger2016blowup,wallauch2023strichartz,ostermann2024stable}, the corresponding ODE has only three regular singular points, making it a hypergeometric ODE. The connection problem for hypergeometric ODEs has already been solved in the literature; see, for example, \cite{olver2010nist}. To solve \eqref{eigen-1-intro}, one possible approach is to employ the ODE techniques developed by Cosin, Glogic, and Donninger  \cite{costin2016stability,costin2017mode,glogic2020threshold,glogic2021co}. However, such ODE analysis are highly problem-specific and technically delicate. To circumvent these challenges, we introduce a transformation inspired by the Lorentz transformation in self-similar variables:
		\begin{align*}
			\psi(y') = \left( \frac{1-\gamma y'}{\sqrt{1-\gamma^2}}\right)^{-\lambda} \varphi\left(\frac{y'-\gamma}{1-\gamma y'}\right), \quad \gamma = \frac{1}{\sqrt{1+\alpha}}.
		\end{align*}
		This transformation reduces \eqref{eigen-1-intro} to a hypergeometric ODE:
		\begin{align*}
			(\lambda^2-\lambda )\psi + \left(2\lambda y'+ 2 \sqrt{1+\alpha} \right)\partial_{y'} \psi  +(y'^2-1)\partial_{y'y'} \psi =0. 
		\end{align*}
		For further details, see Section \ref{subsec:mode-stability}. 
		\item In Step 2, as mentioned above, the non-compactness of the perturbation $\mathbf{L}_{\alpha,1}$ leads to a loss of information about the essential spectrum, which in turn causes the breakdown of Donninger’s spectral-theoretic framework. Since $\mathbf{L}_{\alpha,1}$ is merely a bounded operator, it becomes necessary to incorporate $\mathbf{L}_{\alpha,1}$ into the free wave operator $\mathbf{L}$ and identify the intrinsic dissipative structure of the full linearized operator $\mathbf{L}_{\alpha}$. This is achieved by establishing the following stronger dissipation estimate for higher derivatives:
		\begin{align*}
			\operatorname{Re }\langle\langle \mathbf{L}_{\alpha} \mathbf{q}, \mathbf{q}\rangle\rangle_k \le -\left(k -\sqrt{1+\alpha}-\frac{1}{2} -\frac{\epsilon}{2}\right) \left\|\mathbf{q} \right\|_{\mathcal{H}^k} + c_{\epsilon,k,\alpha} \left\|\mathbf{q} \right\|_{\mathcal{H}^{k-1}},
		\end{align*} 
		(see Proposition \ref{prop-hat-L-alpha}). Additionally, inspired by the abstract functional setting in \cite{MRR22,kim2022self}, we control the lower-order derivatives through a subcoercivity estimate (see Lemma \ref{lem-subcoer}). By combining these two properties, we derive a new decomposition to recover compactness:
		$$ \mathbf{L}_{\alpha} = \hat{\mathbf{L}}_{\alpha} + \hat{\mathbf{P}}_{\alpha},$$
		where 
		\begin{align*}
			\hat{\mathbf{P}}_{\alpha}=\sum_{i=1}^N\left \langle\cdot, {\Pi}_{\alpha,i}\right\rangle_k {\Pi}_{\alpha,i},
		\end{align*}
		is a finite rank projection operator and $\hat{\mathbf{L}}_{\alpha}$ is maximally dissipative operator (see Proposition \ref{prop-hat-L-alpha}).  This new strategy for the spectral analysis of non-self-adjoint operators is robust and well-suited to handling non-compact perturbations of the free wave operator. Moreover, it provides a versatile framework that can potentially be applied to the study of self-similar solutions in other equations. 

		\item In Step 3, uniform resolvent estimates are crucial for translating spectral information into growth bounds for semigroups. Previous studies on uniform resolvent estimates, as noted in \cite{chen2023singularity}, are limited to compact perturbations of the free wave operator. Instead of relying on soft arguments, our approach directly estimates the resolvent by establishing a lower bound for the inner product $\langle (\lambda - \mathbf{L}_{\alpha}) \mathbf{q}, \mathbf{q}\rangle_k$. The key innovation lies in decomposing the inner product into its real and imaginary parts. The real part allows us to control the highest-order norm using the aforementioned stronger dissipation of higher derivatives. The  $L^2$  norm is then controlled by exploiting the structure of the imaginary part (see Proposition \ref{prop:resolvent-esti}). By combining this direct resolvent estimate method with the improved spectral analysis strategy outlined above, we successfully address the limitations of Donninger’s spectral-theoretic framework for non-compact perturbations.

		\item Furthermore, there are additional technical challenges that require careful consideration. For instance, due to the translation invariance of the profile $u$  (i.e., if  $u$  is a solution to \eqref{eq:nlw}, then  $u+\kappa$  is also a solution), even the free wave operator in self-similar variables has an unstable eigenvalue $0$. To address this issue, we introduce a modified free wave operator, which is constructed by adding a compact perturbation to the original operator. This modification effectively removes the eigenvalue  $0$  and ensures the generation of the semigroup for the linearized operator  $\mathbf{L}_{\alpha}$ .
		
		Another key aspect of our analysis is the existence of a generalized eigenfunction corresponding to the eigenvalue  $0$  (see Proposition \ref{prop-spectrum}). This new unstable mode arises from the free parameter $ \alpha$  and reflects the non-self-adjoint nature of the linearized operator  $\mathbf{L}_{\alpha}$. Consequently, the algebraic multiplicity of the eigenvalue  $0$  exceeds its geometric multiplicity, adding complexity to the linear analysis (see Proposition \ref{prop-growth-bound}).

	\end{itemize}

     Finally, we emphasize that the approach developed in this paper is both general and robust. Its flexibility allows for straightforward extensions to higher-dimensional settings, providing a natural framework for analyzing similar nonlinear wave equations in diverse contexts, as long as the self-similar solution is explicit. Moreover, the methodology is not limited to the specific equation studied here; it is adaptable to a broad class of nonlinear models with analogous structures. These features make it a powerful tool for investigating complex blow-up dynamics and stability properties in various applications. While this paper focuses on the one-dimensional case, we plan to explore specific higher-dimensional examples and other nonlinear models in future work, further demonstrating the adaptability of our approach.

	
	
   \subsection{Structure of the paper} 
 In Section \ref{sec:2}, we construct smooth generalized self-similar blow-up solutions through a direct ODE analysis, providing the proof of Theorem \ref{thm:existence}. Section \ref{sec:3} is devoted to establishing the mode stability of the generalized self-similar solutions. In Section \ref{sec:4}, we perform a detailed spectral analysis of the linearized operator, which constitutes a central part of our proof. To address the non-compact perturbation arising from the derivative nonlinear term, we develop a new decomposition of the linearized operator to restore compactness (see Section \ref{subsec:4.3}). We then derive the uniform resolvent estimate in Section \ref{subsec:resolvent}. In Section \ref{sec:5}, we establish the linearized stability along the stable subspace. Finally, in Section \ref{sec:6}, we prove the nonlinear stability of the solutions using the Lyapunov-Perron method.

	\subsection{Notation}
	
	In this paper, we adopt the following notations. The open interval $(x_0-r,x_0+r)$ centered at $x_0\in \mathbb{R}$ is denoted by $B_r(x_0)$, with $B_r = B_r(0), B = B_1$. The closure of $B_r(x_0)$ is denoted by $\overline{B_r(x_0)} = [x_0-r,x_0+r]$. For $T>0, x_0\in \mathbb{R}$, the backward light cone is defined as
	$$\Gamma(T,x_0) :=\left\{(t, x) \in\left[0, T\right) \times \mathbb{R}:\left|x-x_0\right| \le T-t\right\}.$$ 
	For $a,b\in \mathbb{R},$ we use the notation $a\lesssim b$ to mean that there exists a constant $C>0$ such that $a\le Cb$, and $a\lesssim_{\alpha} b$ indicates that the constant $C$ depends on $\alpha$.
	
	For functions $f:\mathbb{R}\to \mathbb{R}, y\mapsto f(y)$, we denote the first derivative by  $f' = \partial_y f = \partial f$ and the $i$-th order derivative by $\partial_y^{i} f = \partial^{i} f$. For a domain $\Omega \subset \mathbb{R}$,  the set $C^{\infty}(\bar{\Omega})$  consists of all smooth functions on $\Omega$ whose derivatives are continuous up to the boundary of $\Omega$. If $\Omega$ is bounded, we define the inhomogeneous Sobolev norm and homogeneous Sobolev seminorm for $k \in \mathbb{N} \cup\{0\}$ as
	\begin{align*}
		\|f\|_{H^k(\Omega)}=\left(\sum_{0 \le i \leq k}\left\| \partial^i f\right\|_{L^2(\Omega)}^2\right)^{\frac{1}{2}} \quad \text { and } \quad\|f\|_{\dot{H}^k(\Omega)}=\left\| \partial^{k} f\right\|_{L^2(\Omega)}
	\end{align*}
	for $f \in C^{\infty}(\bar{\Omega})$, respectively. The Sobolev space $H^k(\Omega)$ is then defined as the completion of $C^{\infty}(\overline{\Omega)}$ with respect to the inhomogeneous Sobolev norm.  For $k\ge 0$, we define $\mathcal{H}^k =H^{k+1}(-1,1) \times H^{k}(-1,1)$.
	
	We use boldface notation for tuples of functions, for example:
	\begin{align*}
		\mathbf{f} \equiv\left(f_1, f_2\right) \equiv\left[\begin{array}{l}
			f_1 \\
			f_2
		\end{array}\right] \quad \text { or } \quad \mathbf{q}(t, .) \equiv\left(q_1(t, .), q_2(t, .)\right) \equiv\left[\begin{array}{l}
			q_1(t, .) \\
			q_2(t, .)
		\end{array}\right].
	\end{align*}
	Linear operators that act on tuples of functions are also displayed in boldface notation. For a closed linear operator $\mathbf{L}$ on a Banach space, we denote its domain by $\mathcal{D}(\mathbf{L})$, its spectrum by $\sigma(\mathbf{L})$, and its point spectrum by $\sigma_p(\mathbf{L})$. The resolvent operator is denoted by $\mathbf{R}_{\mathbf{L}}(z):=(z-\mathbf{L})^{-1}$ for $z \in \rho(\mathbf{L})=\mathbb{C} \backslash \sigma(\mathbf{L})$. The space of bounded operators on a Banach space $\mathcal{X}$ is denoted by $\mathcal{L}(\mathcal{X})$.
	
	For details on the spectral theory of linear operators, we refer to \cite{kato1995}. The theory of strongly continuous operator semigroups is covered in the textbook \cite{engel2000one}.
	
	
	\section{Construction of smooth blow-up solutions}\label{sec:2}
	In this section, we prove that all exact self-similar blow-up solutions to Eq. \eqref{eq:nlw} are singular inside the backward lightcone. Then, we derive our construction of smooth generalized blow-up solutions. 
	
	\subsection{Non-existence of smooth self-similar solutions}
	Eq. \eqref{eq:nlw} is invariant under the scaling transformation 
	\begin{align*}
		u\mapsto u^{\lambda} \quad \text{with~} \quad u^{\lambda}(t,x) = u\left(\frac{t}{\lambda},\frac{x}{\lambda}\right)
	\end{align*}
	for any $\lambda>0$. For any $T>0, x_0\in \mathbb{R}$, the self-similar ansatz of the nonlinear wave equation \eqref{eq:nlw} is 
	\begin{align*}
		u(t,x) = U\left(s,y\right),
	\end{align*}
	where
	$$  s=-\log(T-t)+\log(T) = -\log\left(1-\frac{t}{T}\right), \quad \text{and~} y = \frac{x-x_0}{T-t}.$$
	In terms of the self-similar variables $s $ and $y$, Eq. \eqref{eq:nlw} can be rewritten as
	\begin{align}\label{eq:nlw-ss}
		\partial_{ss} U + \partial_{s} U +2y\partial_{y}\partial_s U + 2 y\partial_y U  + (y^2-1) \partial_{yy}U = (\partial_y U)^2.
	\end{align}
	We first consider the exact self-similar solutions, i.e. stationary solutions $U = U(y)$ to Eq. \eqref{eq:nlw-ss}, satisfying
	\begin{align}\label{eq:ss}
		2 y\partial_y U + (y^2-1) \partial_{yy}U = (\partial_y U)^2,
	\end{align}
	which is equivalent to
	$$\partial_y ((y^2-1)\partial_y U) = (\partial_y U)^2.$$
	Regarding the above equation as a first-order ODE, we can solve \eqref{eq:ss} explicitly as 
	\begin{align*}
		\partial_y U = \left(c(y^2-1) +\frac14 \left(2y+(y^2-1)\log \left|\frac{y-1}{y+1}\right|\right)\right)^{-1} \text{~or~} 0,
	\end{align*}
	where $c\in \mathbb{R}$ is an arbitrary constant. We find that all non-trivial solutions have a singular point inside the light cone $|y|<1$. Indeed, denote the denominator $$p_c(y):= c(y^2-1) +\frac14 \left(2y+(y^2-1)\log \left|\frac{y-1}{y+1}\right|\right),$$ then we have 
	$$p_c(-1) = -\frac{1}{2}, \quad p_c(1) = \frac{1}{2}.$$
	By the Intermediate Value Theorem, there exists a point $y_0 \in (-1,1)$ such that $p_c(y_0) = 0$ which is a singular point of $\partial_y U$.
	
	\subsection{Smooth generalized self-similar blow-up solutions with a log growth}
	Instead of taking exact self-similar ansatz $U=U(y)$, we now consider the following generalized self-similar profiles 
	\begin{align}\label{eq:ss-profile-log}
		u(t,x) =U(s,y) = \alpha s + \tilde{U}(y). 
	\end{align}
	for any non-zero constant $\alpha\in \mathbb{R}$, which allows for a linear growth in $s$. Note that this introduces a logarithmic growth in $t$ since $s= -\log\left(1-\frac{t}{T}\right)$. Then, $\tilde{U}$ satisfies
	\begin{align}\label{regu-profile}
		2 y\partial_y \tilde{U} +  (y^2-1) \partial_{yy}\tilde{U} + \alpha
		= (\partial_y \tilde{U})^2. 
	\end{align}
	Fortunately, we find that for any $\alpha>0$, there exist regular solutions to \eqref{regu-profile} inside the light cone. Actually, let $\tilde{U}=a\log(b\pm y)$, we can solve \eqref{regu-profile} for 
	$$a=-\alpha,\quad  b=\sqrt{1+\alpha},$$
	i.e. 
	$$\tilde{U} = -\alpha\log(\sqrt{1+\alpha} \pm y).$$
	Since \eqref{regu-profile} is a Riccati type equation, we can further obtain all solutions to \eqref{regu-profile} as
	$$\tilde{U}_{\alpha,\beta,\kappa}= -\alpha\log(\sqrt{1+\alpha} + y) + W_{\alpha,\beta,\kappa}, \quad  y \in [-1,1],$$
	where 
	\begin{align}\label{eq:W}
		\begin{aligned}
			\partial_y W_{\alpha,\beta,\kappa} &=  \left(\frac{1+y}{1-y}\right)^{\sqrt{1+\alpha}} \frac{1}{ (y+\sqrt{1+\alpha})^2} \left( \beta + \int_{-1}^{y} \frac{(1+z)^{\sqrt{1+\alpha}-1}}{(1-z)^{\sqrt{1+\alpha}+1}} \frac{1}{ (z+\sqrt{1+\alpha})^2} dz \right)^{-1}\\
			&=  \frac{2 \alpha\sqrt{1+\alpha}}{(\sqrt{1+\alpha}-y)(\sqrt{1+\alpha}+y)} \left( \beta\left(\frac{1-y}{1+y}\right)^{\sqrt{1+\alpha}}  \frac{2 \alpha\sqrt{1+\alpha}(\sqrt{1+\alpha}+y)}{\sqrt{1+\alpha}-y} + 1 \right)^{-1}
		\end{aligned}
	\end{align}
	and 
	\begin{align*}
		W_{\alpha,\beta,\kappa}(0) =\kappa.
	\end{align*}
	The parameter $\beta$ is given by
	$$ \beta = \frac{1}{(1+\alpha)\partial_y W_{\alpha,\beta,\kappa}(0)} -\frac{1}{2 \alpha\sqrt{1+\alpha}},$$
	i.e.
	$$ \partial_y W_{\alpha,\beta,\kappa}(0) = \frac{2\alpha}{2\alpha(1+\alpha)\beta + \sqrt{1+\alpha}}.$$
	Moreover, we have
	\begin{itemize}
		\item When $\beta=0,$ $$\tilde{U}_{\alpha,0,\kappa} = -\alpha\log(\sqrt{1+\alpha}-y)+\kappa;$$
		\item When $\beta=+\infty,$ 
		$$\tilde{U}_{\alpha,\infty,\kappa} = -\alpha\log(\sqrt{1+\alpha}+y)+\kappa;$$
	\end{itemize}    
	Thus, for $\alpha>0$, both $\tilde{U}_{\alpha,0,\kappa}$ and $\tilde{U}_{\alpha,\infty,\kappa}$ are smooth inside the light cone. However, when $0<\beta<\infty, \alpha>0$, $\tilde{U}_{\alpha,\beta,\kappa}$ is smooth only if $\sqrt{1+\alpha}\in \mathbb{N}$. This gives countable    families of smooth blow-up profiles $\tilde{U}_{n^2-1,\beta,\kappa}, 0<\beta<\infty$ for $n\ge 2, n\in \mathbb{N}$. 
	
	Thus, we obtain the following smooth generalized self-similar profiles 
	$$U_{\alpha,\beta,\kappa}(s,y) = \alpha s+\tilde{U}_{\alpha,\beta,\kappa}.$$ 
	Going back to the original variables $t,x$, we have
	\begin{align}\label{gss}
		u_{\alpha,\beta,\kappa,T,x_0}(t,x) = -\alpha\log\left(1-\frac{t}{T}\right) -\alpha\log\left(\sqrt{1+\alpha}+\frac{x-x_0}{T-t}\right) + w_{\alpha,\beta,\kappa,T,x_0}(t,x), 
	\end{align}
	where 
	\begin{align*}
		&w_{\alpha,\beta,\kappa,T,x_0}(t,x) = W_{\alpha,\beta,\kappa}(s,y)\\ 
		& = \kappa + \int_0^{\frac{x-x_0}{T-t}} \frac{2\alpha\sqrt{1+\alpha}}{(\sqrt{1+\alpha}-z)(\sqrt{1+\alpha}+z)} \left( \beta\left(\frac{1-z}{1+z}\right)^{\sqrt{1+\alpha}}  \frac{2\alpha\sqrt{1+\alpha}(\sqrt{1+\alpha}+z)}{\sqrt{1+\alpha}-z} + 1 \right)^{-1} dz.
	\end{align*}
	Denote $\tilde{u}_{\alpha,\beta,\kappa,T,x_0}(t,x):=-\alpha\log\left(\sqrt{1+\alpha}+\frac{x-x_0}{T-t}\right) + w_{\alpha,\beta,\kappa,T,x_0}(t,x),$ we have 
	\begin{align*}
		&\tilde{u}_{\alpha,\beta,\kappa,T,x_0}(t,x) \\
		=&  \kappa -\alpha\log \sqrt{1+\alpha}+\int_0^{\frac{x-x_0}{T-t}} \frac{\alpha(1+z)^{\sqrt{1+\alpha}} - 2 \alpha^2\sqrt{1+\alpha}\beta(1-z)^{\sqrt{1+\alpha}}}{(1+z)^{\sqrt{1+\alpha}}(\sqrt{1+\alpha} -z) + 2 \alpha\sqrt{1+\alpha}\beta(1-z)^{\sqrt{1+\alpha}}(\sqrt{1+\alpha}+z)} dz.
	\end{align*}		

	\subsection{Interpretation of generalized self-similar blow-up solutions}\label{subsec:interpretation}
	In this subsection, we demonstrate that the generalized self-similar blow-up solutions \eqref{gss} can be regarded as \textit{exact} self-similar blow-up solutions to a transformed version of \eqref{eq:nlw} obtained by taking the spatial derivative. More precisely, let $h = u_x$, then 
	\begin{align}\label{eq:h}
		h_{tt} - h_{xx} = 2hh_x.
	\end{align}
	Consider the corresponding self-similar ansatz 
	\begin{align*}
		h(t,x) = \frac{1}{T-t} H\left(s, y\right), \quad s= -\log(T-t) + \log(T), \ y=\frac{x-x_0}{T-t}.
	\end{align*}
	Then, $H$ satisfies
	\begin{equation}\label{ss-eq-h}
		\partial_{ss} H + 3\partial_s H + 2 y\partial_y \partial_s H + 4 y\partial_y H +  (y^2-1) \partial_{yy}H + 2H
		= 2H \partial_y H.
	\end{equation}
	The stationary solutions to \eqref{ss-eq-h} satisfy
	\begin{align}\label{ss-h}
		4 y\partial_y H +  (y^2-1) \partial_{yy}H + 2H
		= 2H \partial_y H. 
	\end{align}
	which is equivalent to 
	\begin{align*}
		\partial_y ((y^2-1)\partial_y H +  2yH )
		= \partial_y H^2. 
	\end{align*}
	Thus, 
	\begin{align*}
		(y^2-1)\partial_y H +  2yH +\alpha
		= H^2. 
	\end{align*}
	This goes back to the equation \eqref{regu-profile}. Using the same method, we can solve \eqref{ss-h} as
	\begin{align}
		H_{\alpha, \beta}(y)= \frac{\alpha}{\sqrt{1+\alpha} + y} \frac{1 - 2 \alpha\sqrt{1+\alpha}\beta \left(\frac{1-y}{1+y}\right)^{\sqrt{1+\alpha}} }{\frac{\sqrt{1+\alpha} -y}{\sqrt{1+\alpha} +y} + 2 \alpha\sqrt{1+\alpha}\beta \left(\frac{1-y}{1+y}\right)^{\sqrt{1+\alpha}} }, \quad y\in [-1,1].
	\end{align}
	Here $H_{\alpha,\beta}(y)$ is always continuous inside the light cone for all $\alpha>0, \beta\ge 0$, but $H_{\alpha,\beta}(y)$ does not belong to $C^k[-1,1]$ for $k>\sqrt{1+\alpha}$, when $\sqrt{1+\alpha}\notin \mathbb{N}$ and $0<\beta<\infty$.
	And for each fixed $y\in [-1,1]$, $H_{\alpha,\beta}(y)$ is monotone decreasing with respect to $\beta$, i.e.
	$$ H_{\alpha,\infty}(y) \le H_{\alpha, \beta}(y) \le H_{\alpha,0}(y).$$
	In particular, when $\beta=0$, 
	$$H_{\alpha,0}(y) = \frac{\alpha}{\sqrt{1+\alpha} - y};$$
	when $\beta=\infty,$
	$$H_{\alpha,\infty}(y) = \frac{-\alpha}{\sqrt{1+\alpha} + y}.$$
	Note that for $0<\beta<\infty$, 
	\begin{align*}
		&\lim_{y\to -1} H_{\alpha,\beta}(y) = -1 - \sqrt{1+\alpha} = H_{\alpha,\infty}(-1),\\
		&\lim_{y\to 1} H_{\alpha,\beta}(y) = 1 + \sqrt{1+\alpha} = H_{\alpha,0}(1).
	\end{align*}
	Thus, $H_{\alpha,\beta}$ connects to $H_{\alpha,0}$ and $H_{\alpha,\infty}$ at $-1$ and $1$ respectively. 
	Moreover, we have the following symmetry
	\begin{align*}
		H_{\alpha,\beta}(-y) = - H_{\alpha, \beta'} (y), \quad \beta' = \frac{1}{4\alpha^2(1+\alpha) \beta}.
	\end{align*}
	In particular, when $\beta_* = \frac{1}{2 \alpha\sqrt{1+\alpha}}$, $H_{\alpha, \beta_*}$ is odd, $H_{\alpha,0} (-y) = -H_{\alpha, \infty}(y)$.


	\section{Mode stability}\label{sec:3}
	
	In this section, we take the first step in studying the linear stability of the smooth generalized self-similar blow-up solutions $U_{\alpha, \infty,\kappa}$. Specifically, we aim to establish the mode stability of the linearized equation at  $U_{\alpha, \infty,\kappa}$, demonstrating that the linearized operator has no unstable eigenvalues other than $0$ and $1$, which arise from the symmetries of \eqref{eq:nlw}.
	
	
	Let $U = U_{\alpha, \infty,\kappa} + \eta $, then by \eqref{eq:nlw-ss} the corresponding linearized equation of $\eta$ reads
	\begin{align}\label{linear-ss}
		\partial_{ss} \eta + \partial_s \eta + 2 y\partial_y \partial_s \eta + \left(2y - 2 \partial_y U_{\alpha, \infty,\kappa} \right)\partial_y \eta + ( y^2 -1) \partial_{yy} \eta =0.  
	\end{align}
	Denote $\mathbf{q} = (q_1, q_2)^T = (\eta, \partial_s \eta + y\partial_y \eta)^T$, then we can rewrite \eqref{linear-ss} as a first-order PDE system
	\begin{align}\label{linear-ss-system}
		\partial_s \left(\begin{array}{l}
			q_1  \\
			q_2  
		\end{array}\right) &= \left(\begin{array}{c}
			- y\partial_y q_1 + q_2 \\
			- q_2 - y\partial_y q_2 + \partial_{yy}q_1  + 2 \partial_y U_{\alpha,\infty,\kappa} \partial_y q_1 
		\end{array}\right)  =: \tilde{\mathbf{L}}_{\alpha}  \left(\begin{array}{l}
			q_1  \\
			q_2  
		\end{array}\right),
	\end{align}
	where the potential
	$$ \partial_y U_{\alpha,\infty,\kappa}(s, y) = -\frac{\alpha}{\sqrt{1+\alpha} + y} $$
	is independent of time $s$. Note that the linearized operator $\tilde{\mathbf{L}}_{\alpha}$ above does not depend on the parameters $\kappa$, $T$, and $x_0$, and it is non-self-adjoint, which makes the stability analysis of \eqref{linear-ss-system} difficult.  
	\subsection{Unstable eigenvalues induced by symmetries}
	
	We postpone the rigorous functional setup of the linearized operator $\tilde{\mathbf{L}}_{\alpha}$ to the next section, and regard $\tilde{\mathbf{L}}_{\alpha}$ as a formal differential operator. We first study the possible unstable eigenvalues of $\tilde{\mathbf{L}}_{\alpha}$ as a step towards proving its mode stability. 
	\begin{defn}
		$\lambda \in \mathbb{C}$ is called an eigenvalue of $\tilde{\mathbf{L}}_{\alpha}$ if there exists a nontrivial smooth function $\varphi\in C^\infty[-1,1]$ such that 
		\begin{align}\label{eigen-eq}
			(\lambda^2+\lambda )\varphi + \left((2\lambda+2) y+ \frac{2\alpha}{\sqrt{1+\alpha}+y} \right)\partial_y \varphi  +(y^2-1)\partial_{yy} \varphi =0. 
		\end{align}
		We call an eigenvalue $\lambda$ is unstable if its real part is non-negative, i.e. $\operatorname{Re} \lambda \ge 0$. Otherwise, we call $\lambda$ a stable eigenvalue.
	\end{defn}
	Actually, if $\tilde{\mathbf{L}}_{\alpha} \mathbf{q} = \lambda \mathbf{q}$ for some $\mathbf{q} = (q_1,q_2)^T \in (C^\infty[-1,1])^2$, then $q_1$ satisfies \eqref{eigen-eq} and $\lambda $ is an eigenvalue of $\tilde{\mathbf{L}}_{\alpha}$. Conversely, if $\lambda $ is an eigenvalue of $\tilde{\mathbf{L}}_{\alpha}$ with a smooth eigenfunction $\varphi(y)$, then $\mathbf{q} := (\varphi, \lambda\varphi + y\partial_y \varphi)^T$ satisfies $\tilde{\mathbf{L}}_{\alpha} \mathbf{q} = \lambda \mathbf{q}$.
	
	\begin{remark}
		It is equivalent to define eigenvalues of $\tilde{\mathbf{L}}_{\alpha}$ such that the corresponding eigenfunctions are only in the Sobolev space $H^{k+1}(-1,1)$ for $k\ge k_{\alpha}+2$. This is due to the fact that the eigen-equation \eqref{eigen-eq} is a second-order elliptic equation whose solutions are always $C^\infty$ smooth in $(-1,1)$. Moreover, we shall prove that any $H^{k+1}(-1,1)$ solution to \eqref{eigen-eq} is also smooth (and even analytical) at the end points $\pm 1$, see Proposition \ref{prop-1dsol-space} in the Appendix.    
	\end{remark}

	First, the symmetries of Eq. \eqref{eq:nlw-ss} give us some explicit unstable eigenvalues. More precisely, if $U(s,y)$ is a solution, then the following transformations of $U(s,y)$ are also solutions to \eqref{eq:nlw-ss}:
	\begin{itemize}
		\item (Space translation in $x$) for any $a \in \mathbb{R}$, the function
		\begin{align*}
			U_1(s, y)= U\left(s, y+a e^s\right);
		\end{align*}
		\item (Time translation in $t$) for any $b \in \mathbb{R}$, the function 
		\begin{align*}
			U_2(s, y)= U\left(s-\log \left(1+b e^s\right), \frac{y}{1+b e^s}\right);
		\end{align*} 
		\item(Time translation in $s$)  for any $c \in \mathbb{R}$, the function 
		\begin{align*}
			U_3(s,y) = U(s+c, y);
		\end{align*}
		\item (Translation in $u$)  for any $d \in \mathbb{R}$, the function 
		\begin{align*}
			U_4(s,y)=U(s, y) +d;
		\end{align*}
		\item(Scaling)  for any $\mu \in \mathbb{R}$, the function 
		\begin{align*}
			U_5(s,y)= U\left(\frac{y}{1+ (\mu-1)T e^s}, s-\log \left(1+(\mu-1)T e^s\right) + \log(\mu) \right).
		\end{align*}
		This invariance can be viewed as a composition of the time translation in $t$ and $s$, with $b=(\mu-1)T$ and $c= \log(\mu)$.
	\end{itemize}
	For generalized self-similar blow-up solutions 
	$$U_{\alpha,\infty,\kappa}(s,y )= \alpha s - \alpha\log(\sqrt{1+\alpha}+y) + \kappa,$$
	the parameter $\alpha$ also gives another family of solutions to  Eq. \eqref{eq:nlw-ss}. Moreover, we observe that  
	\begin{itemize}
		\item $\partial_{a} U_1|_{a=0}$ and $ \partial_{b} U_2|_{b=0}$ generate the same 1d eigenspace of eigenvalue 1;
		\item $\partial_{c} U_3|_{c=0}$ and $ \partial_{d} U_4|_{d=0}$ generate the same 1d eigenspace of eigenvalue 0;
		\item $\partial_{\alpha} U_{\alpha,\infty,\kappa}$ is a generalized eigenfunction of eigenvalue 0, i.e. 
		for $\mathbf{q} = (\partial_{\alpha} U_{\alpha,\infty,\kappa}, y\partial_y\partial_{\alpha} U_{\alpha,\infty,\kappa})^T$, $\tilde{\mathbf{L}}_{\alpha} \mathbf{q} =(1,0)^T \neq 0$ and $\tilde{\mathbf{L}}_{\alpha}^2 \mathbf{q} =0$.
	\end{itemize}
	\begin{remark} Note that when $\beta=\infty$, $\partial_\beta U_{\alpha,\beta,\kappa} \equiv 0$; when $\beta=0$, $\partial_\beta U_{\alpha,\beta,\kappa} $ is singular at $y=-1$. Therefore, the parameter $\beta$ does not generate an unstable eigenvalue of $\tilde{\mathbf{L}}_{\alpha}$.
	\end{remark}
	In summary, the linearized operator $\tilde{\mathbf{L}}_{\alpha}$ has at least two unstable eigenvalues, $0$ and $1$, each with an explicit eigenfunction. Nevertheless, we introduce the following notion of mode stability, see \cite{donninger2011stable,donninger2024spectral}. 
	\begin{defn}[Mode stability]
		We say that the blow-up solution $U_{\alpha,\infty,\kappa}$ is mode stable if the existence of a nontrivial smooth function $\varphi \in C^{\infty}[-1,1]$ that satisfies Eq. \eqref{eigen-eq} necessarily implies that $\lambda \in \{0,1\}$ or $\operatorname{Re} \lambda<0$.
	\end{defn}
	
	\begin{remark}
		In fact, we shall prove a stronger result than the mode stability above, replacing  $\operatorname{Re} \lambda<0$ by $\operatorname{Re} \lambda\le -1$. This could help us derive a uniform spectral gap, see Proposition \ref{prop-spectrum}. 
	\end{remark}
	
	To study the mode stability of $U_{\alpha,\infty,\kappa}$, we need to solve Eq. \eqref{eigen-eq}, which can be rewritten as a Heun type ODE:
	\begin{align}\label{eigen-eq-y}
		\varphi'' + \left[\frac{\lambda+\sqrt{1+\alpha}}{y-1} + \frac{\lambda - \sqrt{1+\alpha}}{y+1} + \frac{2}{y+ \sqrt{1+\alpha}}\right] \varphi' + \frac{\lambda^2+\lambda}{y^2-1} \varphi = 0.
	\end{align}
	Note that Eq. \eqref{eigen-eq-y} has four regular singular points $\pm 1, -\sqrt{1+\alpha}$ and $\infty$.  The standard form of a Heun type equation reads
	\begin{align*}
		g^{\prime \prime}(z)+\left[\frac{\gamma}{z}+\frac{\delta}{z-1}+\frac{\epsilon}{z-d}\right] g^{\prime}(z)+\frac{ab z-c}{z(z-1)(z-d)} g(z)=0,
	\end{align*}
	where $a, b, c, d, \gamma, \delta, \epsilon \in \mathbb{C}$ and $\epsilon = a+b +1 -\gamma-\delta.$ 
	Let $y=2z-1$, we can rewrite \eqref{eigen-eq-y} as the standard form
	\begin{align}\label{eigen-eq-z}
		\varphi'' + \left[\frac{\lambda - \sqrt{1+\alpha}}{z}+ \frac{\lambda+\sqrt{1+\alpha}}{z-1}  + \frac{2}{z+ \frac{\sqrt{1+\alpha}-1}{2}}\right] \varphi' + \frac{\lambda^2+\lambda}{z(z-1)} \varphi = 0,
	\end{align}
	where we take $\gamma = \lambda -\sqrt{1+\alpha}, \delta = \lambda +\sqrt{1+\alpha}, d = -\frac{\sqrt{1+\alpha}-1}{2}, a=\lambda, b = \lambda+1, c= -\frac12 (\lambda^2+\lambda) (\sqrt{1+\alpha}-1), \epsilon = 2 = \alpha+\beta +1 -\gamma-\delta.$ 
	
	However, the existence of smooth solutions to such type of ODEs remains largely open, which is also known as ``connection problem" \cite{donninger2024spectral}. We bypass this difficulty using the Lorentz transformation in self-similar variables to convert the Heun type ODE \eqref{eigen-eq-z} to a new hypergeometric ODE, see \eqref{eigen-eq-z'}. 
	

	\subsection{Proof of the mode stability}\label{subsec:mode-stability}
	The idea is motivated by our new observation that the blow-up solution $U_{\alpha,\infty,\kappa}$ can be transformed to the ODE blow-up solution of a new derivative NLW equation using the Lorentz transformation. More precisely, we recall the Lorentz transformation and its inverse
	\begin{align}
		\left\{ \begin{aligned}
			t' = \frac{t-\gamma x}{\sqrt{1-\gamma^2}}, \\
			x' = \frac{x-\gamma t}{\sqrt{1-\gamma^2}},
		\end{aligned}\right. \quad \text{~and~} \left\{ \begin{aligned}
			t = \frac{t'+\gamma x'}{\sqrt{1-\gamma^2}}, \\
			x = \frac{x'+\gamma t'}{\sqrt{1-\gamma^2}},
		\end{aligned}\right.
	\end{align}
	where $\gamma\in(-1,1)$. Let $v(t',x') = u(t,x) = u(t(t',x'),x(t',x'))$, then if $u$ solves \eqref{eq:nlw}, $v(t',x')$ satisfies
	\begin{align}\label{nlw-transform}
		v_{t't'} - v_{x'x'} = \frac{1}{1-\gamma^2}\left(v_{x'}^2 -2\gamma v_{x'} v_{t'} +\gamma^2 v_{t'}^2\right).
	\end{align}
	And under this transformation, the generalized self-similar blow-up solutions
	\begin{align*}
		u_{\alpha,\infty,\kappa,T,x_0}(t,x) &= -\alpha\log\left(1-\frac{t}{T}\right) -\alpha\log\left(\sqrt{1+\alpha}+\frac{x-x_0}{T-t}\right) +\kappa \\
		&= -\alpha \log\left(\sqrt{1+\alpha}(T-t)+x-x_0\right) +\kappa +\log(T)
	\end{align*}
	correspond to 
	\begin{align*}
		v_{\alpha,\infty,\kappa,T,x_0}(t',x') &:= -\alpha \log\left(\sqrt{1+\alpha}(T-t)+x-x_0\right) +\kappa +\log(T)  \\
		& = -\alpha \log\left(\sqrt{1+\alpha}T - \frac{\sqrt{1+\alpha}-\gamma}{\sqrt{1-\gamma^2}}t' + \frac{1 - \sqrt{1+\alpha}\gamma}{\sqrt{1-\gamma^2}} x'- x_0\right) +\kappa +\log(T). 
	\end{align*}
	In particular, if we take $\gamma = \frac{1}{\sqrt{1+\alpha}}$, then $v_{\alpha,\infty,\kappa,T,x_0}$ is independent of the spacial variable $x'$, i.e $v_{\alpha,\infty,\kappa,T,x_0}$ is an ODE blow-up solution to Eq. \eqref{nlw-transform}. Note that even with this transformation it is not easy to directly reduce the stability of $u_{\alpha,\infty,\kappa,T,x_0}$ to the stability of $v_{\alpha,\infty,\kappa,T,x_0}$ due to the following reasons:
	\begin{enumerate}
		\item The initial perturbations to $u_{\alpha,\infty,\kappa,T,x_0}$ and $v_{\alpha,\infty,\kappa,T,x_0}$ are imposed on different regions, i.e. $\{(t,x) \in \mathbb{R}^2 : t=0\}$ and  $\{(t',x')  \in \mathbb{R}^2 : t'=0\} = \{ (t,x) \in \mathbb{R}^2 : t-\gamma x=0\}$ respectively.
		\item The stable evolution in $t'$ direction does not necessarily imply a stable evolution in $t$ direction.
	\end{enumerate}
	Nevertheless, the advantage of this transformation is that it allows us to study the spectrum of the linearized operator at $u_{\alpha,\infty,\kappa,T,x_0}$ by analyzing the spectrum of the linearized operator at $v_{\alpha,\infty,\kappa,T,x_0}$ in self-similar variables, which facilitates a rigorous proof of the mode stability of $U_{\alpha,\infty,\kappa}$. To begin with, we consider the Lorentz transformation in self-similar variables which was first introduced by \cite{merle2007existence} for NLW with power nonlinearities. 
	
	For $T'>0, x_0'\in \mathbb{R}$, define the corresponding self-similar variables for $t'$ and $x'$ as 
	$$ s' = -\log(T'-t') + \log(T'), \quad y' =\frac{x'-x_0'}{T'-t'},$$
	and let $V(s',y') = v(t',x')$, if $v(t',x')$ solves Eq. \eqref{nlw-transform}, then $V(s',y')$ satisfies 
	\begin{align}\label{nlw-ss-transform}
		\begin{aligned}
			&\partial_{s's'} V + \partial_{s'} V +2y'\partial_{y'}\partial_{s'} V + 2 y'\partial_{y'} V  + ({y'}^2-1) \partial_{y'y'} V \\
			& = \frac{1}{1-\gamma^2}\left( (\partial_{y'} V)^2 -2\gamma \partial_{y'} V (\partial_{s'} V + y'\partial_{y'} V ) + \gamma^2 (\partial_{s'} V + y'\partial_{y'} V )^2  \right).
		\end{aligned}
	\end{align}
	A direct computation implies the following transformation in self-similar variables.
	\begin{prop}[The Lorentz transform in self-similar variables]\label{prop-lorentz-ss} Consider $U(s,y)$ a solution to \eqref{eq:nlw-ss} defined for all $|y|<1$, and introduce for any $\gamma \in (-1,1)$, the function $V \equiv \mathcal{T}_{\gamma} U$ defined by
		\begin{align*}
			V(s',y') = U(s,y), \text{~where~}  s =s' - \log\frac{1-\gamma y'}{\sqrt{1-\gamma^2}}, \quad y=\frac{y'-\gamma}{1-\gamma y'},
		\end{align*}
		then $V(s',y')$ solves \eqref{nlw-ss-transform} for all $|y'|<1$.
	\end{prop}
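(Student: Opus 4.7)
The plan is to reduce the verification to a conceptual check that composes two changes of variables, avoiding as much of the chain-rule grind as possible. Recall from the discussion preceding the proposition that if $u(t,x)$ solves \eqref{eq:nlw} and $v(t',x') := u(t,x)$ with $(t,x)$ and $(t',x')$ related by the Lorentz boost of parameter $\gamma\in(-1,1)$, then $v$ solves \eqref{nlw-transform}. Passing to self-similar variables in the primed frame turns \eqref{nlw-transform} into \eqref{nlw-ss-transform}, by precisely the same derivation that takes \eqref{eq:nlw} to \eqref{eq:nlw-ss}, since the linear part of the wave operator is Lorentz-invariant and only the quadratic form on the right-hand side carries the $\gamma$-dependent coefficients. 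Consequently, once I identify the map $(s',y')\mapsto(s,y)$ stated in the proposition as the composition of the primed self-similar inverse, the Lorentz boost, and the unprimed self-similar change of variables, the identity $V(s',y')=U(s,y)$ becomes a restatement of $v(t',x')=u(t,x)$, and the proposition follows without any further work on the PDE side.

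To execute the alignment, I would fix $x_0=x_0'=0$ and match the blow-up times by setting $T'=T\sqrt{1-\gamma^2}$, so that the blow-up point $(T,0)$ in the unprimed frame is the image of $(T',0)$ under the boost. Substituting the Lorentz formulas $t=(t'+\gamma x')/\sqrt{1-\gamma^2}$ and $x=(x'+\gamma t')/\sqrt{1-\gamma^2}$ together with $T'-t'=T'e^{-s'}$ and $x'=y'(T'-t')$ yields
$$T-t=Te^{-s'}\,\frac{1-\gamma y'}{\sqrt{1-\gamma^2}},\qquad x=Te^{-s'}\,\frac{y'-\gamma}{\sqrt{1-\gamma^2}}.$$
Taking logarithms gives $s=s'-\log\frac{1-\gamma y'}{\sqrt{1-\gamma^2}}$, and dividing produces $y=(y'-\gamma)/(1-\gamma y')$, exactly the change of variables stated in the proposition. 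The Jacobian is never singular for $|y'|<1$ since $|1-\gamma y'|\geq 1-|\gamma|>0$, and the map sends $\{|y'|<1\}$ bijectively onto $\{|y|<1\}$, so the correspondence is valid throughout the backward light cone.

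An alternative, purely computational, route is to verify the proposition by a direct chain-rule substitution: using
$$\partial_{s'}s=1,\quad \partial_{s'}y=0,\quad \partial_{y'}s=\frac{\gamma}{1-\gamma y'},\quad \partial_{y'}y=\frac{1-\gamma^2}{(1-\gamma y')^2},$$
one expresses $\partial_{s'}V$, $\partial_{y'}V$, and their second derivatives in terms of derivatives of $U$, substitutes into \eqref{nlw-ss-transform}, and after multiplying by $(1-\gamma y')^2/(1-\gamma^2)$ the equation collapses onto \eqref{eq:nlw-ss}. I would not expect any genuine obstacle in either route; the only place where care is needed is to check that the quadratic combination $v_{x'}^2-2\gamma v_{x'}v_{t'}+\gamma^2 v_{t'}^2$ in \eqref{nlw-ss-transform} transforms back to the single square $(\partial_y U)^2$, which is automatic because this combination is, up to the factor $1-\gamma^2$, exactly $(u_x)^2$ written in the boosted frame. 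Thus the whole proof is a bookkeeping exercise, and I would present the conceptual route with the direct calculation relegated to a remark or omitted altogether.
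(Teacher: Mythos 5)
Your overall strategy (factor the stated change of variables through the physical variables, using the boost together with the two self-similar reductions) is legitimate, and your fallback chain-rule verification is precisely the paper's (omitted) ``direct computation''; the Jacobian entries you list are correct. However, the alignment step on which your primary route rests is wrong as written. With $x_0=x_0'=0$ the blow-up points cannot be matched at all: the boost sends the tip $(t',x')=(T',0)$ of the primed cone to $(t,x)=\bigl(T'/\sqrt{1-\gamma^2},\,\gamma T'/\sqrt{1-\gamma^2}\bigr)$, whose spatial coordinate is nonzero for $\gamma\neq 0$. With your choice $T'=T\sqrt{1-\gamma^2}$, $x_0'=0$, the correct substitution gives $T-t=Te^{-s'}(1-\gamma y')$ and $x=\gamma T+Te^{-s'}(y'-\gamma)$, not the two formulas you display; consequently $y=\frac{\gamma e^{s'}+y'-\gamma}{1-\gamma y'}$, which is not the M\"obius map of the proposition, and the argument as stated does not go through. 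The correct matching is $T'=T/\sqrt{1-\gamma^2}$ and $x_0'=-\gamma T'$ (the boosted image of $(T,0)$). With that choice one gets $T-t=\frac{T e^{-s'}(1-\gamma y')}{1-\gamma^2}$ and $x-x_0=\frac{T e^{-s'}(y'-\gamma)}{1-\gamma^2}$, hence $y=\frac{y'-\gamma}{1-\gamma y'}$ as desired, but $s=s'-\log(1-\gamma y')+\log(1-\gamma^2)$, which differs from the proposition's map by the constant shift $\tfrac12\log(1-\gamma^2)$ in $s$. So even after fixing the matching, the composition does not reproduce the stated transformation ``exactly''; you must add the (easy, but necessary) observation that \eqref{eq:nlw-ss} and \eqref{nlw-ss-transform} are autonomous in $s$, $s'$, so a constant time shift preserves the solution property — a step your write-up omits because it asserts the formulas match on the nose.

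In short: the conceptual route can be repaired along the lines above (correct boosted blow-up point, plus $s$-translation invariance to absorb the constant), but as presented its central computation is false. The cleanest complete proof remains the direct substitution you sketch as an ``alternative'': with $s=s'-\log\frac{1-\gamma y'}{\sqrt{1-\gamma^2}}$, $y=\frac{y'-\gamma}{1-\gamma y'}$ one has $\partial_{s'}s=1$, $\partial_{s'}y=0$, $\partial_{y'}s=\frac{\gamma}{1-\gamma y'}$, $\partial_{y'}y=\frac{1-\gamma^2}{(1-\gamma y')^2}$, and inserting these into \eqref{nlw-ss-transform} and multiplying by $(1-\gamma y')^2/(1-\gamma^2)$ reduces it to \eqref{eq:nlw-ss}; this is exactly the computation the paper has in mind, and it should be carried out (or at least its key cancellations recorded) rather than relegated to a remark, since it is the only part of your argument that is currently fully correct.
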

	
	Note that for $U_{\alpha,\infty,\kappa} = \alpha s -\alpha\log(\sqrt{1+\alpha}+y) +\kappa$, the corresponding function $V_{\alpha,\infty,\kappa} \equiv \mathcal{T}_{\gamma} U_{\alpha,\infty,\kappa}$ solves Eq. \eqref{nlw-ss-transform} and satisfies
	\begin{align*}
		V_{\alpha,\infty,\kappa}(s',y') &= \alpha\left(s' - \log\frac{1-\gamma y'}{\sqrt{1-\gamma^2}}\right) - \alpha\log\left(\sqrt{1+\alpha}+\frac{y'-\gamma}{1-\gamma y'}\right) +\kappa\\
		& = \alpha s' -\alpha\log\frac{1}{\sqrt{1-\gamma^2}}\left( \sqrt{1+\alpha}-\gamma + (1-\sqrt{1+\alpha}\gamma)y' \right) +\kappa.
	\end{align*}
	Again, when $\gamma = \frac{1}{\sqrt{1+\alpha}}$, $V_{\alpha,\infty,\kappa}$ is independent of the variable $y'$. Hereafter, we always take $\gamma = \frac{1}{\sqrt{1+\alpha}}$. Let $V= V_{\alpha,\infty,\kappa} + \xi$, then the linearized equation of  Eq. \eqref{nlw-ss-transform}  at $V_{\alpha,\infty,\kappa}$ is
	\begin{align}\label{linearized-ss-transform}
		\partial_{s's'} \xi - \partial_{s'} \xi +2y'\partial_{y'}\partial_{s'} \xi + 2\sqrt{1+\alpha} \partial_{y'} \xi  + ({y'}^2-1) \partial_{y'y'} \xi  = 0.
	\end{align}
	Denote $\mathbf{r} = (r_1, r_2) = (\xi, \partial_{s'} \xi + y'\partial_{y'} \xi)$, then we can rewrite \eqref{linearized-ss-transform} as a first-order PDE system
	\begin{align}\label{linear-ss-system-transform}
		\partial_s \left(\begin{array}{l}
			r_1  \\
			r_2  
		\end{array}\right) &= \left(\begin{array}{c}
			- y'\partial_{y'} r_1 + r_2 \\
			r_2 - y'\partial_{y'} r_2 + \partial_{y'y'}r_1  - 2 \sqrt{1+\alpha}\partial_{y'} r_1 
		\end{array}\right)  =: \mathbf{L}^{\prime}_\alpha  \left(\begin{array}{l}
			r_1  \\
			r_2  
		\end{array}\right).
	\end{align}
	\begin{defn}
		$\lambda \in \mathbb{C}$ is called an eigenvalue of $\mathbf{L}^{\prime}_\alpha$ if there exists a nontrivial smooth function $\psi\in C^{\infty}[-1,1]$ such that 
		\begin{align}\label{eigen-eq-transform}
			(\lambda^2-\lambda )\psi + \left(2\lambda y'+ 2 \sqrt{1+\alpha} \right)\partial_{y'} \psi  +(y'^2-1)\partial_{y'y'} \psi =0. 
		\end{align}
	\end{defn}
	
	Same as Proposition \ref{prop-lorentz-ss}, the Lorentz transformation also transforms the linearized equation at $U_{\alpha,\infty,\kappa}$ to the linearized equation at $V_{\alpha,\infty,\kappa}$.
	\begin{prop}[Transformation of the linearized operator at $U_{\alpha,\infty,\kappa}$]\label{prop-transform-linear-op} Consider $U(s,y) = U_{\alpha,\infty,\kappa} + \eta(s,y) $, where $\eta$ satisfies the linearized equation \eqref{linear-ss}, and introduce $\xi \equiv \mathcal{T}_\gamma \eta$ with $\gamma = \frac{1}{\sqrt{1+\alpha}}$, then $\xi$ satisfies \eqref{linearized-ss-transform}.
	\end{prop}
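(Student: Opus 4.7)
The plan is to leverage Proposition \ref{prop-lorentz-ss} together with the linearity of the pointwise change of variables $\mathcal{T}_\gamma$. Since $\mathcal{T}_\gamma$ is defined by composition with a fixed coordinate change, it acts linearly on functions: for every $\epsilon \in \mathbb{R}$,
\[
\mathcal{T}_\gamma\bigl(U_{\alpha,\infty,\kappa} + \epsilon\, \eta\bigr) \;=\; \mathcal{T}_\gamma U_{\alpha,\infty,\kappa} + \epsilon\, \mathcal{T}_\gamma \eta \;=\; V_{\alpha,\infty,\kappa} + \epsilon\, \xi.
\]
So it suffices to show that $\xi$ satisfies the linearization of the fully nonlinear equation \eqref{nlw-ss-transform} around $V_{\alpha,\infty,\kappa}$, and then verify that this linearization coincides with \eqref{linearized-ss-transform}.

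For the first part, I would set $U_\epsilon := U_{\alpha,\infty,\kappa} + \epsilon\,\eta$ and observe that the hypothesis that $\eta$ solves \eqref{linear-ss} is precisely the statement that $U_\epsilon$ solves \eqref{eq:nlw-ss} up to an error of order $O(\epsilon^2)$. Applying Proposition \ref{prop-lorentz-ss} to $U_\epsilon$ gives that $V_\epsilon := \mathcal{T}_\gamma U_\epsilon = V_{\alpha,\infty,\kappa} + \epsilon\,\xi$ solves \eqref{nlw-ss-transform} to the same order. Differentiating in $\epsilon$ at $\epsilon = 0$ then yields the equation satisfied by $\xi$, namely the linearization of \eqref{nlw-ss-transform} at $V_{\alpha,\infty,\kappa}$.

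For the second part, the crucial simplification is that, as noted immediately before the proposition, the choice $\gamma = 1/\sqrt{1+\alpha}$ makes $V_{\alpha,\infty,\kappa}$ independent of the spatial variable $y'$, with
\[
\partial_{s'} V_{\alpha,\infty,\kappa} = \alpha, \qquad \partial_{y'} V_{\alpha,\infty,\kappa} = 0.
\]
Substituting $V = V_{\alpha,\infty,\kappa} + \epsilon\,\xi$ into the right-hand side of \eqref{nlw-ss-transform} and retaining only the first-order terms in $\epsilon$, every contribution involving $\partial_{y'}V_{\alpha,\infty,\kappa}$ vanishes, so the nonlinear RHS reduces to
\[
\frac{2\alpha}{1-\gamma^2}\Bigl(-\gamma\,\partial_{y'}\xi + \gamma^2\bigl(\partial_{s'}\xi + y'\partial_{y'}\xi\bigr)\Bigr)
\;=\; -2\sqrt{1+\alpha}\,\partial_{y'}\xi + 2\,\partial_{s'}\xi + 2y'\partial_{y'}\xi,
\]
where I used $1-\gamma^2 = \alpha/(1+\alpha)$. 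Matching this against the linearization of the LHS of \eqref{nlw-ss-transform} and cancelling the common term $2y'\partial_{y'}\xi$ yields exactly \eqref{linearized-ss-transform}.

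There is no real analytic obstacle here; the content of the proposition is captured by the fact that $V_{\alpha,\infty,\kappa}$ is an ODE profile in the primed coordinates, which makes the linearization particularly clean. The only bookkeeping hazard is the sign of the $\partial_{s'}\xi$ coefficient, which flips from $+1$ on the LHS to $-1$ after absorbing the $+2\partial_{s'}\xi$ contribution from the nonlinearity, so I would double-check this cancellation before declaring the computation complete.
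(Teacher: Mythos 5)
Your strategy is sound and genuinely different from the paper's: the paper proves the proposition by a direct chain-rule computation on $\xi=\mathcal{T}_\gamma\eta$ itself, whereas you derive the linearized transformation from the nonlinear one by embedding $\eta$ in the family $U_\epsilon=U_{\alpha,\infty,\kappa}+\epsilon\eta$, transporting the family with $\mathcal{T}_\gamma$ (using its linearity), and extracting the coefficient of $\epsilon$. Your second step is correct as computed: with $\partial_{y'}V_{\alpha,\infty,\kappa}=0$, $\partial_{s'}V_{\alpha,\infty,\kappa}=\alpha$ and $1-\gamma^2=\alpha/(1+\alpha)$, the first-order part of the right-hand side of \eqref{nlw-ss-transform} is indeed $-2\sqrt{1+\alpha}\,\partial_{y'}\xi+2\partial_{s'}\xi+2y'\partial_{y'}\xi$, and after cancelling $2y'\partial_{y'}\xi$ one recovers \eqref{linearized-ss-transform}, including the sign flip $+\partial_{s'}\xi-2\partial_{s'}\xi=-\partial_{s'}\xi$. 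What your route buys is conceptual clarity (linearization commutes with the exact pointwise change of variables, and the primed profile is an ODE profile, so the linearized coefficients are immediate); what the paper's route buys is that no auxiliary statement about approximate solutions is needed.

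The one step you must tighten is the appeal to Proposition \ref{prop-lorentz-ss} for $U_\epsilon$: as stated, that proposition maps exact solutions of \eqref{eq:nlw-ss} to exact solutions of \eqref{nlw-ss-transform}, and it says nothing about how the residual of a non-solution transforms, so "solves up to $O(\epsilon^2)$ is preserved" does not follow from its statement alone. The repair is short: the residual of \eqref{eq:nlw} is invariant under the Lorentz transformation (the d'Alembertian is Lorentz invariant and $u_x=(v_{x'}-\gamma v_{t'})/\sqrt{1-\gamma^2}$ reproduces the nonlinearity in \eqref{nlw-transform} exactly), and passing to self-similar variables multiplies each residual by the nonvanishing factors $(T-t)^{-2}$, $(T'-t')^{-2}$; hence
\begin{align*}
\mathrm{Res}_{\eqref{nlw-ss-transform}}[\mathcal{T}_\gamma U](s',y')=\left(\tfrac{1-\gamma y'}{\sqrt{1-\gamma^2}}\right)^{2}\mathrm{Res}_{\eqref{eq:nlw-ss}}[U](s,y)
\end{align*}
for \emph{arbitrary} $U$, with the factor bounded and nonvanishing on $|y'|\le 1$. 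Since both residuals are quadratic polynomials in $\epsilon$ along your families, equating the $\epsilon$-coefficients then gives exactly that $\xi$ satisfies the linearization of \eqref{nlw-ss-transform} at $V_{\alpha,\infty,\kappa}$, and your computation identifies that linearization with \eqref{linearized-ss-transform}. With this residual identity stated (it is precisely the content of the direct computation behind Proposition \ref{prop-lorentz-ss}), your proof is complete.
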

	\begin{proof}
		The proof follows from a direct computation. 
	\end{proof}

	\begin{cor}\label{cor-eigen}
		$\lambda$ is an eigenvalue of $\tilde{\mathbf{L}}_{\alpha}$ if and only if $\lambda$ is an eigenvalue of $\mathbf{L}'_\alpha$.
	\end{cor}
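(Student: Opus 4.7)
The plan is to make rigorous the observation that the Lorentz transformation in self-similar variables, already shown to carry the linearized equation \eqref{linear-ss} to \eqref{linearized-ss-transform} via Proposition \ref{prop-transform-linear-op}, maps eigenfunctions to eigenfunctions. Concretely, I will show that the explicit relation
\[
\psi(y') = \left( \frac{1-\gamma y'}{\sqrt{1-\gamma^2}}\right)^{-\lambda} \varphi\!\left(\frac{y'-\gamma}{1-\gamma y'}\right), \qquad \gamma = \frac{1}{\sqrt{1+\alpha}},
\]
defines a bijection between the eigenspace of $\tilde{\mathbf{L}}_\alpha$ at $\lambda$ and the eigenspace of $\mathbf{L}'_\alpha$ at $\lambda$.

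For the forward direction, suppose $\varphi \in C^\infty[-1,1]$ satisfies \eqref{eigen-eq}. Then $\eta(s,y) := e^{\lambda s}\varphi(y)$ is a classical mode solution of the scalar linearized equation \eqref{linear-ss}. Apply $\mathcal{T}_\gamma$ with $\gamma = 1/\sqrt{1+\alpha}$ to obtain $\xi := \mathcal{T}_\gamma \eta$, which by Proposition \ref{prop-transform-linear-op} solves \eqref{linearized-ss-transform}. Using the change of variables $s = s' - \log\!\big((1-\gamma y')/\sqrt{1-\gamma^2}\big)$, $y = (y'-\gamma)/(1-\gamma y')$, a direct substitution gives $\xi(s',y') = e^{\lambda s'}\psi(y')$, and separation of variables in \eqref{linearized-ss-transform} then forces $\psi$ to satisfy \eqref{eigen-eq-transform}. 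Smoothness of $\psi$ on $[-1,1]$ follows because (i) $|\gamma| < 1$, so $1-\gamma y' > 0$ on $[-1,1]$ and the prefactor is smooth there; and (ii) the Möbius map $y' \mapsto (y'-\gamma)/(1-\gamma y')$ is a $C^\infty$ diffeomorphism of $[-1,1]$ onto $[-1,1]$ (it sends $\pm 1$ to $\pm 1$ since $|\gamma| < 1$), so $\varphi \in C^\infty[-1,1]$ transfers to a smooth function of $y'$.

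For the reverse direction, I use the fact that the Lorentz transformation is invertible: replacing $\gamma$ by $-\gamma$ gives $\mathcal{T}_{-\gamma} = \mathcal{T}_\gamma^{-1}$, and the Möbius inverse $y \mapsto (y+\gamma)/(1+\gamma y)$ is likewise a smooth diffeomorphism of $[-1,1]$. Starting from $\psi \in C^\infty[-1,1]$ satisfying \eqref{eigen-eq-transform}, I set $\xi(s',y') = e^{\lambda s'}\psi(y')$, which solves \eqref{linearized-ss-transform}, and then $\eta := \mathcal{T}_{-\gamma}\xi$ solves \eqref{linear-ss}. Computing $\eta$ explicitly and separating variables recovers a smooth $\varphi$ solving \eqref{eigen-eq}, completing the equivalence.

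There is no serious obstacle here beyond careful bookkeeping of the change of variables and verifying that the Möbius map is well behaved at the endpoints $y' = \pm 1$. The structural work has already been carried out in Proposition \ref{prop-transform-linear-op}; the corollary is essentially the statement that this transformation intertwines the two linearized operators and therefore identifies their point spectra (on smooth functions) including multiplicities.
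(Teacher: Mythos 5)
Your proposal is correct and follows essentially the same route as the paper: apply the Lorentz transformation $\mathcal{T}_\gamma$ with $\gamma=1/\sqrt{1+\alpha}$ to the mode solution $e^{\lambda s}\varphi$, use Proposition \ref{prop-transform-linear-op} to get a mode solution $e^{\lambda s'}\psi$ of \eqref{linearized-ss-transform} with $\psi$ given by \eqref{transform-eigen-eq}, and invert the transformation for the converse. Your added remarks on the positivity of $1-\gamma y'$ and on the M\"obius map being a smooth diffeomorphism of $[-1,1]$ fixing $\pm 1$ simply make explicit the smoothness bookkeeping the paper leaves implicit.
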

	\begin{proof}
		If $\lambda$ is an eigenvalue of $\tilde{\mathbf{L}}_{\alpha}$, then there exists a smooth function $\varphi\in  C^{\infty}[-1,1]$ such that $e^{\lambda s} \varphi(y)$ solves the linearized equation \eqref{linear-ss}. By the Lorentz transformation and Proposition \ref{prop-transform-linear-op}, we have
		$$\mathcal{T}_{\gamma}(e^{\lambda s} \varphi)(s',y') = e^{\lambda s'} \left( \frac{1-\gamma y'}{\sqrt{1-\gamma^2}}\right)^{-\lambda} \varphi\left(\frac{y'-\gamma}{1-\gamma y'}\right) := e^{\lambda s'} \psi(y') $$
		solves the linearized equation \eqref{linearized-ss-transform}, where $\gamma = \frac{1}{\sqrt{1+\alpha}}$. Thus, $\psi(y')\in  C^{\infty}[-1,1]$ satisfies \eqref{eigen-eq-transform} and $\lambda$ is also an eigenvalue of $\mathbf{L}'_\alpha$. Conversely, by the inverse Lorentz transformation, any eigenvalue of $\mathbf{L}'_\alpha$ is also an eigenvalue of $\tilde{\mathbf{L}}_{\alpha}$. 
	\end{proof}
	
	\begin{remark}
		According to the proof of Corollary \ref{cor-eigen}, the following transformation (induced by the Lorentz transformation $\mathcal{T}_{\gamma}$)
		\begin{align}\label{transform-eigen-eq}
			\psi(y') = \left( \frac{1-\gamma y'}{\sqrt{1-\gamma^2}}\right)^{-\lambda} \varphi\left(\frac{y'-\gamma}{1-\gamma y'}\right), \quad \gamma = \frac{1}{\sqrt{1+\alpha}}
		\end{align}
		actually gives the transformation from the eigen-equation \eqref{eigen-eq} to \eqref{eigen-eq-transform}.
	\end{remark}
	Thus, the mode stability of $U_{\alpha,\infty,\kappa}$ is equivalent to the mode stability of $V_{\alpha,\infty,\kappa}$. Instead of the Heun type ODE \eqref{eigen-eq}, the new eigen-equation \eqref{eigen-eq-transform} has only three singular points $\pm1,\infty$, which is known as the \textit{hypergeometric differential equation}, see \cite{olver2010nist}. Fortunately, the connection problem of the hypergeometric ODE can be solved which leads to a rigorous proof of the mode stability of  $V_{\alpha,\infty,\kappa}$.

	More precisely, recall that the standard form of the hypergeometric differential equation is given by
	$$ z'(1- z')\partial_{z'z'} \psi + \left( c - (a+b+1) z' \right)\partial_{z'} \psi -ab\psi =0,$$
	where $a,b,c$ are constants. Let $y' = 2z'-1$, Eq. \eqref{eigen-eq-transform} can be rewritten as the following standard hypergeometric ODE
	\begin{align}\label{eigen-eq-z'}
		z'(1- z')\partial_{z'z'} \psi + \left( (\lambda - \sqrt{1+\alpha}) - 2\lambda z' \right)\partial_{z'} \psi -\lambda(\lambda-1)\psi =0, 
	\end{align}
	where we take $a=\lambda, b=\lambda-1, c= \lambda - \sqrt{1+\alpha}.$ We shall prove that 
	
	\begin{prop}\label{prop-mode-stability} For all $\alpha>0$, there are no non-trivial smooth solutions $\psi \in C^{\infty}[-1,1]$ to Eq. \eqref{eigen-eq-z'} for $\operatorname{Re} \lambda >-1$, expect $0$ and $1$. As a corollary, $U_{\alpha,\infty,\kappa}$ is mode stable.
	\end{prop}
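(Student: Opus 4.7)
The plan is to view Eq.~\eqref{eigen-eq-z'} as a Gauss hypergeometric equation with parameters $a=\lambda$, $b=\lambda-1$, $c=\lambda-\sqrt{1+\alpha}$, and to combine Frobenius analysis at the two regular singular points $z'=0$ and $z'=1$ with the classical connection formulas \cite{olver2010nist} to identify exactly which $\lambda$ admit a $C^\infty[0,1]$ solution. The characteristic exponents are $\{0,\,1-c\}=\{0,\,1-\lambda+\sqrt{1+\alpha}\}$ at $z'=0$ and $\{0,\,c-a-b\}=\{0,\,1-\sqrt{1+\alpha}-\lambda\}$ at $z'=1$; since $\alpha>0$ forces $\sqrt{1+\alpha}>1$, these non-trivial exponents are generically non-integer and the space of local smooth solutions at each endpoint is one-dimensional.

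In the generic case (when neither $c\in\mathbb{Z}_{\le 0}$ nor $c-a-b\in\mathbb{Z}_{\ge 0}$), the unique solution analytic at $z'=0$ is $\psi_0(z')=F(\lambda,\lambda-1;\lambda-\sqrt{1+\alpha};z')$, and Gauss's relation gives
\begin{align*}
\psi_0(z') =\; & A\,F(\lambda,\lambda-1;\lambda+\sqrt{1+\alpha};1-z') \\
& + B\,(1-z')^{1-\sqrt{1+\alpha}-\lambda}\,F\bigl(-\sqrt{1+\alpha},\,1-\sqrt{1+\alpha};\,2-\sqrt{1+\alpha}-\lambda;\,1-z'\bigr),
\end{align*}
with
\begin{align*}
B = \frac{\Gamma(\lambda-\sqrt{1+\alpha})\,\Gamma(\lambda-1+\sqrt{1+\alpha})}{\Gamma(\lambda)\,\Gamma(\lambda-1)}.
\end{align*}
The factor $(1-z')^{1-\sqrt{1+\alpha}-\lambda}$ is not $C^\infty$ at $z'=1$ (its exponent has real part strictly less than $1$ and is non-integer), so smoothness of $\psi_0$ on $[0,1]$ forces $B=0$. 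The Gamma-quotient vanishes precisely when the denominator has a pole while the numerator remains finite; $\Gamma(\lambda)\Gamma(\lambda-1)$ diverges exactly at $\lambda\in\{1,0,-1,-2,\ldots\}$, and at these values the numerator is finite for $\alpha>0$. Intersecting with $\operatorname{Re}\lambda>-1$ leaves only $\lambda\in\{0,1\}$.

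The remaining work handles two degenerate parameter configurations. When $c=\lambda-\sqrt{1+\alpha}\in\mathbb{Z}_{\le 0}$, i.e., $\lambda=\sqrt{1+\alpha}-n$ for some $n\in\mathbb{N}\cup\{0\}$, the function $\psi_0$ is undefined and the smooth solution at $z'=0$ is instead $\psi_0^*(z')=z'^{1-c}F(\lambda+1-c,\lambda-c;2-c;z')$; applying the analogous connection formula to $\psi_0^*$ yields a similar Gamma-quotient whose zeros, intersected with $\operatorname{Re}\lambda>-1$, again reduce to $\lambda\in\{0,1\}$. When $c-a-b=1-\sqrt{1+\alpha}-\lambda\in\mathbb{Z}_{\ge 0}$, which inside our range forces $n=0$ and $\lambda=1-\sqrt{1+\alpha}$ (only relevant for $\alpha<3$), the two indicial exponents at $z'=1$ coincide and a logarithmic Frobenius solution enters; one verifies by taking a suitable limit in Gauss's formula that the coefficient of the $\log(1-z')$ term does not vanish at this specific $\lambda$, so $\psi_0$ still fails to be smooth at $z'=1$.

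The main obstacle is precisely the analysis of these degenerate cases. The generic Gamma-quotient argument is essentially a one-line calculation once the connection formula is in hand, but for exceptional values of $\alpha$ and $\lambda$ the numerator and denominator of $B$ can simultaneously blow up, or a logarithmic mode can appear in the Frobenius basis at $z'=1$; either phenomenon threatens to produce an additional smooth eigenfunction outside $\{0,1\}$. Carefully taking limits in the connection formula, tracking the surviving residues via contiguous relations, and checking that the logarithmic coefficient does not accidentally vanish is the essential technical content needed to close the classification.
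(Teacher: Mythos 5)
Your generic-case computation is correct, but the route you chose (Gauss connection formulas plus degenerate-case analysis) is not the one the paper takes, and the parts you defer are exactly where the difficulty sits. Two concrete gaps. First, the two-term connection formula you invoke is valid only when $c-a-b=1-\lambda-\sqrt{1+\alpha}\notin\mathbb{Z}$; it degenerates for \emph{every} integer value, not only for $c-a-b\in\mathbb{Z}_{\ge 0}$. Your case list therefore misses $\lambda+\sqrt{1+\alpha}\in\{2,3,\dots\}$, which for $\sqrt{1+\alpha}\in\mathbb{N}$ contains candidate unstable eigenvalues such as $\lambda=2,3,\dots$ that most need to be excluded; for these parameters the coefficient $B$ is not even defined ($\Gamma(a+b-c)$ and $\Gamma(c-a-b)$ have poles) and one must work with the logarithmic connection formula and track residues. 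Second, in the case $c\in\mathbb{Z}_{\le 0}$ you declare the smooth-at-$0$ solution to be $z'^{\,1-c}F(a+1-c,b+1-c;2-c;z')$, but this presumes the companion Frobenius solution at $z'=0$ carries a nonvanishing logarithm; if that log coefficient vanished, the space of solutions smooth at $z'=0$ would be two-dimensional and smoothness at $z'=0$ would impose no constraint, so this must be verified, not assumed. The sentences ``one verifies by taking a suitable limit that the log coefficient does not vanish'' and ``a similar Gamma-quotient whose zeros again reduce to $\{0,1\}$'' are assertions rather than proofs, and, as your own closing paragraph concedes, they are the essential technical content. (Minor point: your claim that the numerator of $B$ is finite at $\lambda\in\{0,1\}$ fails when $\sqrt{1+\alpha}\in\mathbb{N}$, though that configuration is degenerate anyway, and vanishing of $B$ at $\lambda\in\{0,1\}$ is not needed for the statement.)

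The paper sidesteps all of this with a uniform argument that needs no connection formula. By the Frobenius analysis at $z'=1$, any smooth solution is, up to a scalar, $\psi(z')={ }_2 F_1(\lambda,\lambda-1;\lambda+\sqrt{1+\alpha};1-z')$. Its Taylor coefficients at $\tilde z=1-z'=0$ are $a_n=\frac{(\lambda)_n(\lambda-1)_n}{(\lambda+\sqrt{1+\alpha})_n\,n!}$; if $\operatorname{Re}\lambda>-1$ and $\lambda\notin\{0,1\}$ these never vanish and $a_{n+1}/a_n\to 1$, so the radius of convergence is exactly $1$. Since the only finite singular points of the ODE are $z'=0,1$, the series can only be obstructed at $\tilde z=1$, i.e.\ $\psi$ fails to be analytic at $z'=0$; but the Frobenius analysis at $z'=0$ shows that any smooth solution there must be analytic, a contradiction. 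This ratio-test argument treats all parameter configurations (including $\sqrt{1+\alpha}\in\mathbb{N}$ and integer $\lambda$) at once. If you wish to keep your approach, you must genuinely carry out the limiting/logarithmic analysis for all integer values of $\lambda+\sqrt{1+\alpha}$ and $\sqrt{1+\alpha}-\lambda$, including the cases your enumeration omitted; otherwise the classification is not closed.
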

	
	To prove Proposition \ref{prop-mode-stability}, we need the following standard Frobenius analysis.
	\begin{lem}[Frobenius theory \cite{donninger2024spectral,Teschl2012}]\label{lem:Frobenius}
		Consider
		\begin{align}\label{Fuchsian-eq}
			f^{\prime \prime}(z)+p(z) f^{\prime}(z)+q(z) f(z)=0
		\end{align}
		where $p$ and $q$ are given functions and $f$ is the unknown. Denote $\mathbb{D}_R:=$ $\{z \in \mathbb{C}:|z|<R\}$. Let $R>0$ and let $p, q: \mathbb{D}_R \backslash\{0\} \rightarrow \mathbb{C}$ be holomorphic. Suppose that the limits
		\begin{align*}
			p_0:=\lim _{z \rightarrow 0}[z p(z)], \quad q_0:=\lim _{z \rightarrow 0}\left[z^2 q(z)\right]
		\end{align*}
		exist and let $s_{ \pm} \in \mathbb{C}$ satisfy $P\left(s_{ \pm}\right)=0$, where
		\begin{align*}
			P(s):=s(s-1)+p_0 s+q_0
		\end{align*}
		is the indicial polynomial. Let $\operatorname{Re} s_{+} \geq \operatorname{Re} s_{-}$. Then there exists a holomorphic function $h_{+}: \mathbb{D}_R \rightarrow \mathbb{C}$ with $h_{+}(0)=1$ and such that $f: \mathbb{D}_R \backslash(-\infty, 0] \rightarrow \mathbb{C}$, given by $f(z)=z^{s_{+}} h_{+}(z)$, satisfies Eq. \eqref{Fuchsian-eq}. Furthermore, 
		\begin{itemize}
			\item if $s_{+}-s_{-} \notin \mathbb{N}_0$, there exists a holomorphic function $h_{-}$: $\mathbb{D}_R \rightarrow \mathbb{C}$ with $h_{-}(0)=1$ and such that $f(z)=z^{s_{-}} h_{-}(z)$ is another solution of Eq. \eqref{Fuchsian-eq} on $\mathbb{D}_R \backslash(-\infty, 0]$;
			\item if $s_{+}-s_{-} \in \mathbb{N}_0$, there exist a constant $c \in \mathbb{C}$ and a holomorphic function $h_{-}: \mathbb{D}_R \rightarrow \mathbb{C}$ with $h_{-}(0)=1$ such that 
			\begin{align*}
				f(z)=z^{s_{-}} h_{-}(z)+c z^{s_{+}} h_{+}(z) \log z
			\end{align*}
			is another solution of Eq. \eqref{Fuchsian-eq} on $\mathbb{D}_R \backslash(-\infty, 0]$. The constant $c$ might be zero unless $s_{+}=s_{-}$. 
		\end{itemize}
	\end{lem}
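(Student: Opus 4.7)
The plan is to prove the lemma by the classical Frobenius method. For each candidate exponent $s \in \{s_+, s_-\}$ I would substitute a formal series ansatz $f(z) = z^s \sum_{n \ge 0} a_n z^n$ into the ODE, extract a recurrence governed by the indicial polynomial $P$, and prove convergence on $\mathbb{D}_R$ by a majorant estimate. The logarithmic case will be handled by a $\log z$-modified ansatz whose coefficient is fixed by a Fredholm-type solvability condition at the resonant index.

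Since $zp(z)$ and $z^2 q(z)$ extend holomorphically to $\mathbb{D}_R$ by hypothesis, expand them as
$$zp(z) = \sum_{n=0}^{\infty} p_n z^n, \qquad z^2 q(z) = \sum_{n=0}^{\infty} q_n z^n,$$
convergent on $\mathbb{D}_R$. Substituting the ansatz with $a_0 = 1$ and collecting the coefficient of $z^{s+n-2}$ yields the recurrence
$$P(s+n)\, a_n = -\sum_{k=0}^{n-1} \bigl[(s+k)\, p_{n-k} + q_{n-k}\bigr] a_k, \qquad n \ge 1,$$
while $n = 0$ reproduces $P(s) = 0$. For part (1), take $s = s_+$: since $\mathrm{Re}(s_+ + n) > \mathrm{Re}\, s_+ \ge \mathrm{Re}\, s_-$ for $n \ge 1$, the value $s_+ + n$ avoids both roots of $P$, so $a_n$ is uniquely determined. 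For part (2), the hypothesis $s_+ - s_- \notin \mathbb{N}_0$ similarly ensures $P(s_- + n) \ne 0$ for every $n \ge 1$, and the same recipe with $s = s_-$ produces the second independent solution. To pass from formal to genuine holomorphy, I would apply the majorant method: for any $0 < r < R$ one has $|p_n| + |q_n| \le C r^{-n}$, while $|P(s_\pm + n)| \gtrsim n^2$ for large $n$, so induction on the recurrence gives $|a_n| \le M r^{-n}$; letting $r \nearrow R$ shows $h_\pm$ is holomorphic on $\mathbb{D}_R$, and $a_0 = 1$ gives $h_\pm(0) = 1$.

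For part (3), $s_+ - s_- = N \in \mathbb{N}_0$, I would seek a solution of the prescribed form
$$f_-(z) = z^{s_-} \sum_{n=0}^{\infty} b_n z^n + c\, z^{s_+} h_+(z) \log z, \qquad b_0 = 1.$$
Substituting into the ODE, the coefficient of $\log z$ in the residual vanishes identically because $z^{s_+} h_+(z)$ already solves the equation; what remains is an inhomogeneous recurrence for the $b_n$ with a source term proportional to $c$ arising from the $z^{-1}$ contributions produced by differentiating $\log z$. For $n < N$ the recurrence is nondegenerate and determines $b_n$ uniquely; at the resonant index $n = N$ one has $P(s_- + N) = P(s_+) = 0$, and $c$ is then forced by the requirement that the right-hand side at level $N$ be orthogonal to the cokernel, i.e., simply vanish. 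For $n > N$ the recurrence is again nondegenerate, and the same majorant estimate as in part (1)–(2) yields holomorphy of $\sum b_n z^n$ on $\mathbb{D}_R$. In the edge case $N = 0$ one may alternatively invoke reduction of order: from Abel's identity the Wronskian satisfies $W(z) \sim C z^{-p_0}$ near $0$, and $f_-(z) = f_+(z) \int W(\zeta) f_+(\zeta)^{-2} d\zeta$ produces the logarithmic term from the $\zeta^{-1}$ coefficient in the Laurent expansion of the integrand.

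The hard part will be the convergence bookkeeping in part (3): because $b_N$ is free while $c$ is fixed by the resonance, one must propagate the geometric majorant bound $|b_n| \le M r^{-n}$ across the single degenerate level while carrying the $c$-dependent inhomogeneity coming from $h_+$. The saving fact is that the source term at each level inherits a geometric decay from the already-established bounds on $h_+$, so the inductive majorant survives with a possibly enlarged constant $M$. Everything else is a careful but routine verification of indicial algebra and series manipulation in the spirit of the standard references \cite{Teschl2012,donninger2024spectral}.
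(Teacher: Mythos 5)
The paper does not prove this lemma at all: it is quoted verbatim as a standard result with citations to \cite{donninger2024spectral,Teschl2012}, so there is no in-paper argument to compare against. Your proposal is the classical Frobenius-method proof and is correct in all essentials: the recurrence $P(s+n)a_n=-\sum_{k<n}[(s+k)p_{n-k}+q_{n-k}]a_k$ is the right one, the non-vanishing of $P(s_{+}+n)$ for $n\ge 1$ (and of $P(s_{-}+n)$ when $s_{+}-s_{-}\notin\mathbb{N}_0$) follows exactly as you say from $\operatorname{Re}s_{+}\ge\operatorname{Re}s_{-}$, and the majorant argument for convergence on all of $\mathbb{D}_R$ is standard. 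One point worth making explicit in the resonant case: the $\log$-residual equals $z^{-2}\bigl(2zg'-g+(zp)g\bigr)$ with $g=z^{s_+}h_+$, whose leading coefficient is $P'(s_+)=s_{+}-s_{-}=N$; hence your ``solvability condition at level $N$'' determines $c$ only when $N\ge 1$, while for $N=0$ that equation is vacuous and $c$ is instead free but necessarily nonzero for independence. You already flag this and cover $N=0$ by reduction of order via Abel's identity (where $W/f_{+}^{2}=Cz^{-N-1}h(z)$ with $h(0)\neq 0$, so the $z^{-1}$ Laurent coefficient produces the logarithm), which in fact handles all $N$ uniformly and is arguably the cleaner route. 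No gaps beyond routine bookkeeping.
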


	By Frobenius theory, we can always find an infinite series \textit{local} solution of the form
	\begin{align}\label{series-exp-psi}
		\psi(z') = (z'-z'_0)^s \sum_{k=0}^\infty a_k (z'-z'_0)^k 
	\end{align}
	near each singular point $z'_0\in \{0,1\}$. More precisely, by substituting \eqref{series-exp-psi} into \eqref{eigen-eq-z'} and matching the lowest order term, we have the following indicial polynomials 
	\begin{align*}
		P'_0(s) = s(s-1+c) =0 \text{~for~} z'_0 =0, 
	\end{align*}
	and
	\begin{align*}
		P'_1(s) = s(s+a+b-c) =0 \text{~for~} z'_0 =1. 
	\end{align*}
	
	\paragraph{Frobenius analysis at $z'_0=0$} For $z'_0 =0$,
	$$s_+ = 1+\sqrt{1+\alpha} -\lambda,\quad s_-= 0, \quad \textit{~if~}  -1< \operatorname{Re}\lambda \le  1+\sqrt{1+\alpha} $$
	and 
	$$s_+ =0, \quad s_-= 1+\sqrt{1+\alpha} -\lambda, \quad \textit{~if~} \operatorname{Re} \lambda > 1+\sqrt{1+\alpha}. $$
	Therefore, we have
	\begin{enumerate}
		\item[Case 1] if $-1< \operatorname{Re}\lambda \le 1+\sqrt{1+\alpha}$ and $ s_+-s_-\notin \mathbb{N}_0$, the two independent local solutions are given by 
		\begin{align*}
			\psi_{01} = {z'}^{s_+} h_+(z') \quad \text{~and~} \quad \psi_{02} = h_-(z'),
		\end{align*}
		with
		$$h_+(z')={ }_2 F_1(a+1-c, b+1-c ; 2-c ; z'), \quad h_-(z') = { }_2 F_1(a, b ; c ; z'),$$
		where
		$$ { }_2 F_1(a, b ; c ; z')=\sum_{n=0}^{\infty} \frac{(a)_n(b)_n}{(c)_n} \frac{z'^n}{n!}=1+\frac{a b}{c} \frac{z'}{1!}+\frac{a(a+1) b(b+1)}{c(c+1)} \frac{z'^2}{2!}+\cdots$$
		is the hypergeometric function defined for all $|z'|<1$, see \cite{olver2010nist}.
		Note that $\psi_{01}$ is not smooth at $0$ since $s_+\notin \mathbb{N}_0$. Thus, any local smooth solution around $z_0'=0$ must be a multiple of $\psi_{02}$, thus is also analytic at $0$. 
		
		\item[Case 2] if $-1< \operatorname{Re}\lambda \le  1+\sqrt{1+\alpha}$ and $ s_+-s_-\in \mathbb{N}_0$, the two independent local solutions are given by 
		\begin{align*}
			\psi_{01} = z'^{s_+} h_+(z') \quad \text{~and~} \quad \psi_{02} = h_-(z') + c'z^{s_+} h_+(z') \log(z'),
		\end{align*}
		where 
		$$ h_+(z')={ }_2 F_1(a+1-c, b+1-c ; 2-c ; z'), $$
		and 
		$h_{-}$ is an analytic function around $0$ and $c'\in \mathbb{C}$ is a constant. Note that $z^{s_+} h_+(z') \log(z')$ is not a smooth function, thus a local smooth solution around $z_0'=0$ exists only if it is a multiple of $\psi_{01}$ or $c'=0$. In either case, the local smooth solution must also be analytic at 0.  
		
		\item[Case 3] if $\operatorname{Re} \lambda > 1+\sqrt{1+\alpha}$,
		the two independent local solutions are given by 
		\begin{align*}
			\psi_{01} = h_+(z') \quad \text{~and~} \quad \psi_{02} =  z'^{s_-}h_-(z') + c' h_+(z') \log(z'),
		\end{align*}
		where
		$$h_+(z')= { }_2 F_1(a, b ; c ; z')$$
		and 
		$h_{-}$ is an analytic function around $0$ and $c'\in \mathbb{C}$ is a constant. Similarly, any local smooth solution around $z_0'=0$ must be also analytic at $0$. 
	\end{enumerate}
	
	\paragraph{Frobenius analysis at $z'_0=1$}
	For $z'_0 =1$,
	$$ s_+ =1 -\sqrt{1+\alpha} -\lambda, \quad s_{-} = 0, \quad \text{if~} -1< \operatorname{Re} \lambda \le 1 -\sqrt{1+\alpha}$$
	and
	$$ s_+ = 0,\quad  s_{-} = 1 -\sqrt{1+\alpha} -\lambda, \quad \text{if~} \operatorname{Re} \lambda > 1 -\sqrt{1+\alpha}. $$
	Therefore, we have
	\begin{itemize}
		\item[Case 1']  if  $-1< \operatorname{Re}\lambda \le 1-\sqrt{1+\alpha}$ and $ s_+ - s_{-}\notin \mathbb{N}_0$, the two independent local solutions are given by 
		\begin{align*}
			\psi_{11} = (z'-1)^{s_+} h_+(z'-1) \quad \text{~and~} \quad \psi_{12} = h_-(z'-1),
		\end{align*}
		where  
		$$h_-(z'-1)= { }_2 F_1(a, b ; 1+a+b-c ; 1-z')$$
		and 
		$h_{+}(z'-1)$ is an analytic function around $1$.
		Note that $\psi_{11}$ is not smooth at $1$ since $s_+\notin \mathbb{N}_0$. Thus, any local smooth solution must be a multiple of $\psi_{12}$, thus is also analytic at 1. 
		\item[Case 2'] if $-1< \operatorname{Re}\lambda \le 1-\sqrt{1+\alpha}$ and $ s_+ - s_{-}\in \mathbb{N}_0$, then $s_{+} = s_{-} =0$, since $s_{+} \in \mathbb{N}_0$ and $\operatorname{Re} s_+ <1$ for all $\operatorname{Re}\lambda >-1$. Thus, the two independent local solutions are given by 
		\begin{align*}
			\psi_{11} = h_+(z'-1) \quad \text{~and~} \quad \psi_{12} = h_-(z'-1) + c' h_+(z'-1) \log(z'-1).
		\end{align*}
		where
		$$h_+(z'-1)= { }_2 F_1(a, b ; 1+a+b-c ; 1-z'),$$
		and 
		$h_{-}(z'-1)$ is an analytic function around $1$ and the constant
		$c'\neq 0$ since $s_{+} = s_{-}$. Note that $ h_+(z'-1) \log(z'-1)$ is not smooth at $1$. Thus, any local smooth solution must be a multiple of $\psi_{11}$, and is also analytic at 1.    
		\item[Case 3'] if $\operatorname{Re} \lambda > 1-\sqrt{1+\alpha}$,
		the two independent local solutions are given by 
		\begin{align*}
			\psi_{11} = h_+(z-1) \quad \text{~and~} \quad \psi_{12} =  (z-1)^{s_-}h_-(z-1) + c' h_+(z-1) \log(z-1),
		\end{align*}
		where 
		$$h_+(z'-1)= { }_2 F_1(a, b ; 1+a+b-c ; 1-z'),$$
		and 
		$h_{-}$ is an analytic function around $0$ and $c'\in \mathbb{C}$ is a (possibly vanishing) constant. Since $\operatorname{Re} s_{-} = 1-\sqrt{1+\alpha} -\operatorname{Re}\lambda<0$, $\psi_{12}$ is not smooth at $1$ regardless the choice of $c'\in \mathbb{C}$. Thus, any local smooth solution around $z'_0=1$ is a multiple of $\psi_{11}$, and is also analytic at 1. 
	\end{itemize}
	
    Now we are ready to prove the mode stability of $U_{\alpha,\infty,\kappa}$. 

\begin{proof}[Proof of Proposition \ref{prop-mode-stability}]
	By the Frobenius analysis above, for any $\lambda \in \mathbb{C}$ with $\operatorname{Re}\lambda >-1$ and any nontrivial smooth solution $\psi(z')\in C^{\infty}[0,1]$ to \eqref{eigen-eq-z'},  $\psi(z')$ must be also analytic on $[0,1]$. We aim to show that $\psi(z')$ must fail to be analytic at $z'=0$ unless $\lambda \in \{0,1\}$.
	
	The Frobenius analysis at $z_0'=1$ above shows that in all three cases, any local smooth solution around $z_0'=1$ must be given by a multiple of 
	$$ { }_2 F_1(a, b ; 1+a+b-c ; 1-z').$$
	Therefore, without the loss of generality, we assume
	$$\psi(z') = { }_2 F_1(a, b ; 1+a+b-c ; 1-z'). $$
	Let $\tilde{z}=1-z'$, then $\psi(z') = \psi(1-\tilde{z}) \equiv \tilde{\psi}(\tilde{z})$ is an analytic function around $\tilde{z}=0$. Now the finite regular singular points of the hypergeometric ODE \eqref{eigen-eq-z'} are $\{0,1\}$. Thus, $\psi(z') = \tilde{\psi}(\tilde{z})$ fails to be analytic at $z'=0 $ (i.e. $\tilde{z} =1$) precisely when the radius of convergence of the series 
	$$ \tilde{\psi}(\tilde{z}) =  { }_2 F_1(a, b ; 1+a+b-c ; \tilde{z})$$
	is equal to one.
	
	By the definition of the hypergeometric function, the coefficients of the series expansion of ${ }_2 F_1(a, b ; 1+a+b-c ; \tilde{z})$ are
	$$ a_n(\lambda) = \frac{(a)_n(b)_n}{(1+a+b-c)_n n!} = \frac{(\lambda)_n(\lambda-1)_n}{(\lambda +\sqrt{1+\alpha})_n n!}, $$
	where $(a)_n = a(a+1)\cdots (a+n-1).$ Therefore, if $\lambda \notin \{0,1\}$ and $\operatorname{Re}\lambda > -1$, we have $a_n(\lambda) \neq 0$ for all $n\in \mathbb{N}$, and
	\begin{align*}
		r_n(\lambda):=\frac{a_{n+1}(\lambda)}{a_{n}(\lambda)} = \frac{(\lambda + n)(\lambda + n-1)}{(\lambda + \sqrt{1+\alpha} + n)(n+1)} \to 1, \quad \text{as~} n\to \infty.
	\end{align*}
	This implies that the radius of convergence of $\tilde{\psi}(\tilde{z})$ must be 1.
	
\end{proof}

\section{Spectral analysis of $\tilde{\mathbf{L}}_{\alpha}$}\label{sec:4}
In this section, we study the essential spectrum of $\tilde{\mathbf{L}}_{\alpha}$. The key idea is to decompose $\tilde{\mathbf{L}}_{\alpha}$ as a maximally dissipative operator plus a compact perturbation. To begin with, we present the functional setup. 

\subsection{Functional setup}
By \eqref{linear-ss-system}, the linearized equation can be written as
$$ \partial_s \mathbf{q} = \tilde{\mathbf{L}}_{\alpha} \mathbf{q}, \quad \tilde{\mathbf{L}}_{\alpha} \mathbf{q} =: \tilde{\mathbf{L}}_{\alpha} \begin{pmatrix}
	q_1\\ q_2
\end{pmatrix} =  \begin{pmatrix}
	-y\partial_y q_1 + q_2\\
	\partial_{yy}q_1 -\frac{2\alpha}{\sqrt{1+\alpha}+y} \partial_y q_1 -q_2-y\partial_y q_2
\end{pmatrix}.$$
We can further decompose the operator
$$ \tilde{\mathbf{L}}_{\alpha} = \tilde{\mathbf{L}} + \mathbf{L}_{\alpha,1},$$
where 
\begin{align*}
	\tilde{\mathbf{L}} \mathbf{q} := \begin{pmatrix}
		-y\partial_y q_1+ q_2 - q_1(-1)\\
		\partial_{yy} q_1 -q_2-y\partial_y q_2
	\end{pmatrix}
\end{align*}
is a modified \textit{free wave operator}, and 
\begin{align*}
	\mathbf{L}_{\alpha,1} \mathbf{q} = \begin{pmatrix}
		q_1(-1)\\
		-\frac{2\alpha}{\sqrt{1+\alpha}+y} \partial_y q_1
	\end{pmatrix}
\end{align*}
is called  \textit{potential}. We introduce the following sesquilinear form 
$$ \langle \mathbf{q}, \tilde{\mathbf{q}} \rangle_0 := \int_{-1}^1 \partial_y q_1 \overline{\partial_y \tilde{q}_1} dy + \int_{-1}^1  q_2 \overline{ \tilde{q}_2} dy + q_1(-1)\overline{\tilde{q}_1(-1)} $$
on the space $ C^1[-1,1]\times C^0[-1,1]$, and for $k\ge 1$
$$ \langle \mathbf{q}, \tilde{\mathbf{q}} \rangle_k := \int_{-1}^1 \partial^{k+1}_y q_1 \overline{\partial^{k+1}_y \tilde{q}_1} dy + \int_{-1}^1  \partial^k_y q_2 \overline{  \partial^k_y \tilde{q}_2} dy + \langle \mathbf{q}, \tilde{\mathbf{q}} \rangle_0  $$
on the space $ C^{k+1}[-1,1]\times C^k[-1,1]$. Define the corresponding norms
$$ \|\mathbf{q}\|_{k}^2 := \langle \mathbf{q}, \mathbf{q} \rangle_k.$$
We claim that the norm $\|\cdot\|_k$ is equivalent to the Sobolev norm $H^{k+1} \times H^{k}$.
\begin{lem}\label{lem-norm} For $k\ge 0$ and all $\mathbf{q}\in C^{k+1}[-1,1]\times C^k[-1,1]$, it holds
	\begin{align*}
		\| \mathbf{q} \|_{k} \simeq \|q_1\|_{H^{k+1}} + \|q_2\|_{H^k}.
	\end{align*}
	And the completion of $ C^{k+1}[-1,1]\times C^k[-1,1]$ with the norm $\|\cdot\|_k$ is the Hilbert space $H^{k+1}(-1,1) \times H^{k}(-1,1)$. 
\end{lem}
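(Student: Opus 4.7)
The plan is to prove the two-sided equivalence on the dense subspace $C^{k+1}[-1,1]\times C^k[-1,1]$ and then pass to completions. The inequality $\|\mathbf{q}\|_k \lesssim \|q_1\|_{H^{k+1}} + \|q_2\|_{H^k}$ is immediate: every summand in $\|\mathbf{q}\|_k^2$ is bounded by the right-hand side, where the boundary term $|q_1(-1)|$ is controlled via the one-dimensional Sobolev embedding $H^1(-1,1)\hookrightarrow C^0([-1,1])$, giving $|q_1(-1)|\lesssim \|q_1\|_{H^1}\le \|q_1\|_{H^{k+1}}$.

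The reverse bound is the substantive direction, and I would handle the two components separately. For $q_2$, the estimate $\|q_2\|_{H^k}\lesssim \|\partial^k q_2\|_{L^2}+\|q_2\|_{L^2}$ follows from the standard Gagliardo--Nirenberg interpolation on the bounded interval $(-1,1)$: for $0<j<k$,
\begin{equation*}
\|\partial^j q_2\|_{L^2}\lesssim \|\partial^k q_2\|_{L^2}^{j/k}\|q_2\|_{L^2}^{1-j/k}\lesssim \|\partial^k q_2\|_{L^2}+\|q_2\|_{L^2}.
\end{equation*}
For $q_1$, the same interpolation yields $\|\partial^j q_1\|_{L^2}\lesssim \|\partial^{k+1}q_1\|_{L^2}+\|\partial q_1\|_{L^2}$ for $1\le j\le k$, so it remains to bound $\|q_1\|_{L^2}$. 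Writing $q_1(y)=q_1(-1)+\int_{-1}^{y}\partial q_1(z)\,dz$ and using Cauchy--Schwarz gives the pointwise estimate $|q_1(y)|\le |q_1(-1)|+\sqrt{2}\,\|\partial q_1\|_{L^2}$, hence $\|q_1\|_{L^2}\lesssim |q_1(-1)|+\|\partial q_1\|_{L^2}$. Combining these pieces gives $\|q_1\|_{H^{k+1}}+\|q_2\|_{H^k}\lesssim \|\mathbf{q}\|_k$.

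With the two norms equivalent on $C^{k+1}[-1,1]\times C^k[-1,1]$, their completions coincide as topological vector spaces. Since $C^{k+1}[-1,1]\times C^k[-1,1]$ is dense in $H^{k+1}(-1,1)\times H^k(-1,1)$ (by standard mollification on the bounded interval, using an extension operator if needed), the completion under $\|\cdot\|_k$ equals $H^{k+1}(-1,1)\times H^k(-1,1)$. Finally, polarising the definition of $\|\cdot\|_k$ shows that $\langle\cdot,\cdot\rangle_k$ is a Hermitian inner product inducing the norm, so the completed space is a Hilbert space, as claimed. No step here is a real obstacle; the only mild care is ensuring that the boundary anchor $|q_1(-1)|$ together with $\|\partial q_1\|_{L^2}$ genuinely controls $\|q_1\|_{L^2}$, which is exactly what the fundamental-theorem-of-calculus argument above provides.
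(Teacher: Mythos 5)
Your proposal is correct and follows essentially the same route as the paper: the key step in both is the fundamental-theorem-of-calculus bound $\|q_1\|_{L^2}\lesssim |q_1(-1)|+\|\partial q_1\|_{L^2}$, with the control of intermediate derivatives and the completion argument being standard facts that the paper leaves implicit and you simply spell out.
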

\begin{proof}
	By the definition of $\|\cdot\|_k$, it suffices to show that $$ \|\mathbf{q}\|_{0} \gtrsim \|q_1\|_{L^2} $$
	for all $\mathbf{q}\in C^{1}[-1,1]\times C^0[-1,1]$.
	This follows from 
	\begin{align*}
		|q_1(y)|^2 = \left|q_1(-1) + \int_{-1}^{y} \partial_z q_1 dz\right|^2 \lesssim |q_1(-1)|^2 +  \left| \int_{-1}^{y} \partial_z q_1 dz\right|^2
		\lesssim |q_1(-1)|^2 + \int_{-1}^{1} \left|\partial_y q_1\right|^2 dy
	\end{align*}
	and
	\begin{align*}
		\|q_1\|_{L^2}^2 = \int_{-1}^1 |q_1|^2 dy \lesssim |q_1(-1)|^2 + \int_{-1}^{1} \left|\partial_y q_1\right|^2 dy \lesssim \|\mathbf{q}\|_{0}^2.
	\end{align*}
\end{proof}

\subsection{Generation of the semigroup} For $\alpha>0$, let $\mathcal{D}(\tilde{\mathbf{L}}_{\alpha}) = \mathcal{D}(\tilde{\mathbf{L}}) =  C^{\infty}[-1,1]\times C^{\infty}[-1,1]$. Define $\mathcal{H}^k =H^{k+1}(-1,1) \times H^{k}(-1,1)$, then the free wave operator $\tilde{\mathbf{L}}: \mathcal{D}(\tilde{\mathbf{L}}) \subset \mathcal{H}^k \rightarrow  \mathcal{H}^k$ is a well-defined operator on $\mathcal{H}^k$ for all $k\ge 0$.

\begin{lem}[Dissipativity of the free operator]\label{lem-dissipativity} For $k\ge 0$ and $\mathbf{q}\in  C^{k+1}[-1,1]\times C^k[-1,1]$,
	\begin{align*}
		\operatorname{Re} \langle \tilde{\mathbf{L}} \mathbf{q}, \mathbf{q} \rangle_k \le -\frac{1}{2} \| \mathbf{q} \|_{k}^2.
	\end{align*}
\end{lem}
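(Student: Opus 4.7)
The plan is to prove the inequality by direct integration by parts, separating the $k=0$ base case (where the modification $-q_1(-1)$ plays a role) from the higher-derivative contributions (where it is inert). First I would handle $k=0$. Expanding $\langle \tilde{\mathbf{L}}\mathbf{q},\mathbf{q}\rangle_0$, the first component of $\tilde{\mathbf{L}}\mathbf{q}$ has derivative $-\partial_y q_1 - y\partial_{yy}q_1 + \partial_y q_2$ (the constant $-q_1(-1)$ drops under $\partial_y$), and the central identity
\[
-\operatorname{Re}\int_{-1}^1 y\,\partial_y f\cdot\overline{f}\,dy = -\tfrac{1}{2}|f(1)|^2 - \tfrac{1}{2}|f(-1)|^2 + \tfrac{1}{2}\int_{-1}^1 |f|^2\,dy
\]
produces both boundary contributions and a half-power of the interior $L^2$ norm. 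The two cross terms $\operatorname{Re}\int \partial_y q_2\,\overline{\partial_y q_1}$ and $-\operatorname{Re}\int \partial_y q_1\,\overline{\partial_y q_2}$ (the latter from integrating $\int \partial_{yy}q_1\,\overline{q_2}$ by parts) cancel.

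The key observation is that the surviving boundary quantities rearrange into complete squares. At $y=1$ one gets $-\tfrac{1}{2}|\partial_y q_1(1)-q_2(1)|^2$, and at $y=-1$ one gets $-\tfrac{1}{2}|\partial_y q_1(-1)+q_2(-1)|^2$, both non-positive. The definition of $\langle\cdot,\cdot\rangle_0$ includes a boundary term at $y=-1$, which interacts with the evaluation of $(\tilde{\mathbf{L}}\mathbf{q})_1$ at $y=-1$ to produce
\[
\operatorname{Re}\bigl[(\partial_y q_1(-1)+q_2(-1))\overline{q_1(-1)}\bigr] - |q_1(-1)|^2,
\]
and here the $-q_1(-1)$ modification is essential: applying $\operatorname{Re}(a\bar b)\le \tfrac{1}{2}|a|^2+\tfrac{1}{2}|b|^2$ this is absorbed into the $-\tfrac{1}{2}|\partial_y q_1(-1)+q_2(-1)|^2$ term, leaving $-\tfrac{1}{2}|q_1(-1)|^2$. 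Combining all contributions one obtains exactly $\operatorname{Re}\langle\tilde{\mathbf{L}}\mathbf{q},\mathbf{q}\rangle_0 \le -\tfrac{1}{2}\|\mathbf{q}\|_0^2$.

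For $k\ge 1$, setting $u = \partial_y^{k+1}q_1$ and $v = \partial_y^k q_2$ and applying Leibniz gives
\[
\partial_y^{k+1}(\tilde{\mathbf{L}}\mathbf{q})_1 = -y\partial_y u - (k+1)u + \partial_y v, \qquad \partial_y^k(\tilde{\mathbf{L}}\mathbf{q})_2 = \partial_y u - (k+1)v - y\partial_y v,
\]
where the constant correction $-q_1(-1)$ vanishes under $\partial_y^{k+1}$. The computation is structurally identical to the $k=0$ case: the boundary terms again collapse to $-\tfrac{1}{2}|u(1)-v(1)|^2 - \tfrac{1}{2}|u(-1)+v(-1)|^2\le 0$, the two cross terms cancel in the real part, and the interior contribution is $-(k+\tfrac{1}{2})\int_{-1}^1(|u|^2+|v|^2)\,dy$ (the $-(k+1)$ from Leibniz, plus $+\tfrac{1}{2}$ recovered from integrating $-y\partial_y u\cdot\overline{u}$ and $-y\partial_y v\cdot\overline{v}$). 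Adding this to the $k=0$ estimate yields
\[
\operatorname{Re}\langle\tilde{\mathbf{L}}\mathbf{q},\mathbf{q}\rangle_k \le -\bigl(k+\tfrac{1}{2}\bigr)\bigl(\|\partial^{k+1}q_1\|_{L^2}^2 + \|\partial^k q_2\|_{L^2}^2\bigr) - \tfrac{1}{2}\|\mathbf{q}\|_0^2 \le -\tfrac{1}{2}\|\mathbf{q}\|_k^2.
\]

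The only delicate step is the assembly of boundary terms at $y=-1$ in the $k=0$ case; this is exactly where the $-q_1(-1)$ modification is needed, since without it the would-be null direction of constants in $q_1$ could not be controlled and the dissipativity would fail at the zero mode. Everything else is a routine Leibniz-plus-integration-by-parts bookkeeping.
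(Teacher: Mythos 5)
Your proposal is correct and follows essentially the same route as the paper: direct integration by parts, the identity for $-\operatorname{Re}\int y\,\partial_y f\,\overline{f}$, cancellation of the cross terms up to boundary contributions that assemble into the non-positive squares $-\tfrac12|u(1)-v(1)|^2$, $-\tfrac12|u(-1)+v(-1)|^2$, and absorption of the $-q_1(-1)$ modification at $y=-1$ to retain $-\tfrac12|q_1(-1)|^2$. The only cosmetic difference is that you absorb the boundary cross term at $y=-1$ via $\operatorname{Re}(a\bar b)\le\tfrac12|a|^2+\tfrac12|b|^2$, whereas the paper completes the square as $-\tfrac12\,|y\partial_y q_1-q_2+q_1|^2(-1)-\tfrac12|q_1(-1)|^2$; the two are equivalent for the stated bound.
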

\begin{proof}
	By definition, for $k=0$, we have
	\begin{align*}
		\operatorname{Re} \left\langle  \tilde{\mathbf{L}} \mathbf{q}, \mathbf{q} \right\rangle_{0}
		& = \operatorname{Re} \int_{-1}^1 \partial_y \left(-y\partial_y q_1 + q_2\right) \overline{\partial_y q_1 } dy + \operatorname{Re}\left( \left(- y\partial_y q_1 + q_2 -q_1\right) \overline{ q_1 }|_{-1} \right)\\
		& \quad + \operatorname{Re} \int_{-1}^1  \left(- q_2 - y\partial_y q_2 +\partial_{yy} q_1\right) \overline{q_2} dy\\
		& = -\frac12 \left(\|\partial_y q_1\|_{L^2}^2 + \| q_2\|_{L^2}^2 \right) - \frac12|y\partial q_1 - q_2 |^2(\pm 1) - |q_1(-1)|^2 \\
		& \quad - \operatorname{Re}\left( \left( y\partial_y q_1 - q_2\right)(-1) \overline{ q_1 }(-1)\right)\\
		& = -\frac12 \left(\|\partial_y q_1\|_{L^2}^2 + \| q_2\|_{L^2}^2 \right) - \frac12|y\partial q_1 - q_2 |^2(1) - \frac{1}{2}|q_1(-1)|^2 \\
		& \quad -\frac12 \left| y\partial_y q_1 - q_2 +q_1\right|^2(-1)\\
		& \le -\frac{1}{2} \| \mathbf{q} \|_{0}^2.
	\end{align*}
	For $k\ge 1$, we have
	\begin{align*}
		\operatorname{Re} \left\langle \tilde{\mathbf{L}} \mathbf{q}, \mathbf{q} \right\rangle_{k}& = \operatorname{Re}\int_{-1}^1 \partial_y^{k+1} \left(- y\partial_y q_1 + q_2\right) \overline{\partial_y^{k+1} q_1 } dy \\
		& \quad + \operatorname{Re} \int_{-1}^1 \partial^{k} \left(- q_2 - y\partial_y q_2 +\partial_{yy} q_1\right) \overline{\partial^{k} q_2} dy + \operatorname{Re} \left\langle  \tilde{\mathbf{L}} \mathbf{q}, \mathbf{q} \right\rangle_{0}\\
		& = -\left(k+\frac12\right) \left(\|\partial^{k+1} q_1\|_{L^2}^2 + \|\partial^{k} q_2\|_{L^2}^2 \right) - \frac12|y\partial^{k+1} q_1 - \partial^{k} q_2 |^2(\pm 1) \\
		&\quad + \operatorname{Re} \left\langle  \tilde{\mathbf{L}} \mathbf{q}, \mathbf{q} \right\rangle_{0}\\
		&\le -\frac12\left(\|\partial^{k+1} q_1\|_{L^2}^2 + \|\partial^{k} q_2\|_{L^2}^2 +  \| \mathbf{q} \|_{0}^2 \right) = -\frac12 \|\mathbf{q}\|_k^2.
	\end{align*}
\end{proof}

Next, we show that the range of $\lambda - \tilde{\mathbf{L}}$ is dense in $\mathcal{H}^k$ for some $\lambda>-1$ (we simply take $\lambda =0$ for convenience) and all $k\ge 0$.
\begin{lem}\label{lem-dense}
	Let $k\ge0$ and $\mathbf{f} \in \mathcal{H}^k$. For any $\epsilon>0$, there exists a function $\mathbf{q}\in (C^{\infty}[-1,1])^2$ such that 
	$$ \|  - \tilde{\mathbf{L}} \mathbf{q} - \mathbf{f} \|_{k} \le \epsilon. $$
\end{lem}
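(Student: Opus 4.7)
The plan is to reduce Lemma~\ref{lem-dense} to the exact solvability of $-\tilde{\mathbf{L}}\mathbf{q} = \mathbf{f}$ when $\mathbf{f}$ is smooth, and then invoke density of $(C^\infty[-1,1])^2$ in $\mathcal{H}^k = H^{k+1}(-1,1) \times H^k(-1,1)$. Given $\mathbf{f}_n \in (C^\infty[-1,1])^2$ with $\|\mathbf{f}_n - \mathbf{f}\|_k < \epsilon$, it will suffice to produce $\mathbf{q}_n \in \mathcal{D}(\tilde{\mathbf{L}})$ solving $-\tilde{\mathbf{L}}\mathbf{q}_n = \mathbf{f}_n$ exactly, since then $\|-\tilde{\mathbf{L}}\mathbf{q}_n - \mathbf{f}\|_k = \|\mathbf{f}_n - \mathbf{f}\|_k < \epsilon$.

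To solve $-\tilde{\mathbf{L}}\mathbf{q} = \mathbf{f}$ for smooth $\mathbf{f}$, I will use the first component equation $y\partial_y q_1 - q_2 + q_1(-1) = f_1$ to eliminate $q_2 = y\partial_y q_1 + q_1(-1) - f_1$. Substituting into the second equation gives the degenerate ODE
\begin{equation*}
\partial_y\bigl((y^2 - 1)\partial_y q_1\bigr) = F(y) - q_1(-1), \qquad F := \partial_y(y f_1) + f_2.
\end{equation*}
Integrating from $y = -1$ (where the left-hand side vanishes identically) yields $(y^2-1)\partial_y q_1 = G(y) - q_1(-1)(y+1)$ with $G(y) := \int_{-1}^y F\,dz$. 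For $\partial_y q_1$ to extend smoothly across $y = 1$, the right-hand side must vanish there, which forces the boundary constant to be $q_1(-1) = \tfrac{1}{2} G(1) = \tfrac{1}{2}\int_{-1}^1 F\,dy$.

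With this choice, the numerator $N(y) := G(y) - q_1(-1)(y+1)$ lies in $C^\infty[-1,1]$ and vanishes at both endpoints, so by the standard factorization of smooth functions vanishing on the boundary, $N(y) = (y^2-1)R(y)$ with $R \in C^\infty[-1,1]$. Setting $\partial_y q_1 := R$ and integrating produces a smooth $q_1$ with the prescribed boundary value $q_1(-1) = \tfrac{1}{2}\int_{-1}^1 F\, dy$, and then $q_2 = y R + q_1(-1) - f_1 \in C^\infty[-1,1]$. A direct verification confirms $-\tilde{\mathbf{L}}\mathbf{q} = \mathbf{f}$.

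The only subtlety is the interplay between the nonlocal term $q_1(-1)$ in the modified free wave operator $\tilde{\mathbf{L}}$ and the double degeneracy of the coefficient $y^2 - 1$ at $\pm 1$: smoothness at $y = -1$ is automatic from integrating from that endpoint, while smoothness at $y = 1$ produces a single compatibility condition, which is consumed precisely by the one available free parameter $q_1(-1)$. This is exactly the reason the modification $-q_1(-1)$ was built into the definition of $\tilde{\mathbf{L}}$, and it is what keeps the range dense (equivalently, what ensures $0 \notin \sigma_p(\tilde{\mathbf{L}})$) despite the endpoint degeneracy.
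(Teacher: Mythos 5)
Your proposal is correct and takes essentially the same route as the paper's proof: reduce by density to exact solvability of $-\tilde{\mathbf{L}}\mathbf{q}=\mathbf{f}$ for smooth $\mathbf{f}$, eliminate $q_2$ via the first equation, integrate the resulting equation $\partial_y\bigl((y^2-1)\partial_y q_1\bigr)=F-q_1(-1)$ once, and fix $q_1(-1)=\tfrac12\int_{-1}^1 F$ to remove the singularity at $y=1$. The only cosmetic difference is that you obtain endpoint smoothness via the Hadamard-type factorization $N(y)=(y^2-1)R(y)$, while the paper checks smoothness of the explicit quotient separately near $y=\pm1$; these are equivalent.
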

\begin{proof}
	Since $(C^{\infty}[-1,1])^2$ is dense in $\mathcal{H}^k$, it suffices to prove that for all $\mathbf{f} \in (C^{\infty}[-1,1])^2$, there exists a unique smooth function $\mathbf{q}\in (C^{\infty}[-1,1])^2$ such that
	\begin{align}\label{eq:dense-L}
		- \tilde{\mathbf{L}} \mathbf{q} = \mathbf{f},
	\end{align}
	which is equivalent to
	\begin{align*}
		\left\{
		\begin{aligned}
			y\partial_y q_1 - q_2 + q_1(-1) = f_1\\
			q_2 +y\partial_y q_2 -\partial_{yy} q_1 = f_2. 
		\end{aligned} \right.
	\end{align*}
	Using the first equation to solve for $q_2$, we see that solving this system of ODEs reduces to 
	\begin{align}\label{eq:dense}
		2y\partial_y q_1 + (y^2-1)\partial_{yy} q_1 =  f_1 + y\partial_y f_1 + f_2 - q_1(-1).  
	\end{align}
	Denote $F = f_1 + y\partial_y f_1 + f_2 \in C^{\infty}[-1,1],$ then we can rewrite \eqref{eq:dense} as
	\begin{align*}
		\partial_y ((y^2-1)\partial_{y} q_1) = F(y) - q_1(-1).
	\end{align*}
	Thus, we have
	\begin{align}\label{eq:q_1}
		q_1(y) = q_1(-1) + \int_{-1}^y  \frac{\int_{-1}^{w} \left( F(z) -q_1(-1) \right) dz}{w^2-1} dw.
	\end{align}
	To get rid of the potential singularity at $y=1$, we have to choose
	\begin{align}\label{eq:q1-endpointvalue}
		q_1(-1) = \frac{1}{2} \int_{-1}^{1} F(z) dz.
	\end{align}
	Now we show that the function $q_1(y)$ given by \eqref{eq:q_1}-\eqref{eq:q1-endpointvalue} belongs to $C^{\infty}[-1,1]$. First, since $F\in C^{\infty}[-1,1]$ and $\frac{1}{w^2-1} \in C^{\infty}(-1,1)$, we have $q_1(y)\in C^{\infty}(-1,1)$. So it remains to prove $q_1$ is also smooth at endpoints $\pm1$. 
	
	Near $y=-1$, we have 
	\begin{align*}
		q_1(y) = q_1(-1) + \int_{-1}^y \frac{1}{w-1} \frac{\int_{-1}^{w} \left( F(z) -q_1(-1) \right) dz}{w+1} dw.
	\end{align*}
	Note that $\frac{1}{w-1}$ and $\frac{\int_{-1}^{w} \left( F(z) -q_1(-1) \right) dz}{w+1}$ are smooth at $-1$, thus $q_1(y)$ is also smooth at $-1$.
	
	Near $y=1$, by \eqref{eq:q1-endpointvalue}, we can rewrite \eqref{eq:q_1} as
	\begin{align*}
		q_1(y) &= q_1(-1) + \int_{-1}^y \frac{\int_{w}^{1} \left( F(z) -q_1(-1) \right) dz}{w^2-1} dw\\
		& = q_1(-1) + \int_{-1}^y \frac{1}{w+1}\frac{\int_{w}^{1} \left( F(z) -q_1(-1) \right) dz}{w-1} dw.
	\end{align*}
	Note that $\frac{1}{w+1}$ and $\frac{\int_{w}^{1} \left( F(z) -q_1(-1) \right) dz}{w-1}$ are smooth at $1$, $q_1(y)$ is also smooth at $1$, i.e. $q_1\in C^{\infty}[-1,1]$. Since $q_2 = y\partial_y q_1 + q_1(-1) - f_1$, $q_2 \in C^{\infty}[-1,1]$. So, we have $\mathbf{q}\in (C^{\infty}[-1,1])^2$.

\end{proof}

\begin{prop}\label{prop-L} For $k\ge 0$, the operator $\tilde{\mathbf{L}}: \mathcal{D}(\tilde{\mathbf{L}}) \subset \mathcal{H}^k \rightarrow \mathcal{H}^k$ is densely defined, closable, and its closure $\mathbf{L}$ generates a strongly continuous one-parameter semigroup $\mathbf{S}:[0, \infty) \rightarrow$ $\mathcal{L}(\mathcal{H}^k)$ which satisfies
	\begin{align*}
		\|\mathbf{S}(s)\|_{\mathcal{L}(\mathcal{H}^k)} \le C e^{-\frac{1}{2} s}
	\end{align*}
	for all $s \geq 0$, where $C>0$ is a constant.
\end{prop}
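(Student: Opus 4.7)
The plan is to apply the Lumer--Phillips theorem in its closure version (see, e.g., \cite{engel2000one}) to the shifted operator $A := \tilde{\mathbf{L}} + \tfrac{1}{2} I$. All three hypotheses have essentially been arranged by the two preceding lemmas, so the proof reduces to a bookkeeping exercise once everything is lined up.

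First, I would verify that $\mathcal{D}(\tilde{\mathbf{L}}) = (C^{\infty}[-1,1])^2$ is dense in $\mathcal{H}^k$, which is standard since smooth functions up to the boundary are dense in $H^{k+1}(-1,1)$ and $H^{k}(-1,1)$. Next, Lemma \ref{lem-dissipativity} provides $\operatorname{Re}\langle \tilde{\mathbf{L}} \mathbf{q}, \mathbf{q}\rangle_k \le -\tfrac{1}{2} \|\mathbf{q}\|_k^2$ for every $\mathbf{q} \in \mathcal{D}(\tilde{\mathbf{L}})$, which is equivalent to the statement that $A$ is dissipative with respect to $\langle\cdot,\cdot\rangle_k$. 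By Lemma \ref{lem-norm}, $\langle\cdot,\cdot\rangle_k$ induces a norm equivalent to the standard Sobolev one, so one may work in either without issue. Finally, for the range condition I would observe that $\lambda - A = (\lambda - \tfrac{1}{2}) - \tilde{\mathbf{L}}$, so taking $\lambda = \tfrac{1}{2} > 0$ reduces the required density of $\operatorname{range}(\lambda - A)$ in $\mathcal{H}^k$ to the density of $\operatorname{range}(-\tilde{\mathbf{L}})$, which is exactly the content of Lemma \ref{lem-dense}.

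With these three ingredients, Lumer--Phillips yields that $A$ is closable and that its closure $\bar{A}$ generates a strongly continuous contraction semigroup $\{T(s)\}_{s\ge 0}$ on $(\mathcal{H}^k, \langle\cdot,\cdot\rangle_k)$ with $\|T(s)\| \le 1$. Consequently $\tilde{\mathbf{L}}$ is itself closable, its closure is $\mathbf{L} = \bar{A} - \tfrac{1}{2} I$, and the semigroup generated by $\mathbf{L}$ is $\mathbf{S}(s) = e^{-s/2} T(s)$. Translating back to the equivalent standard Sobolev norm on $\mathcal{H}^k$ via Lemma \ref{lem-norm} then yields the claimed bound $\|\mathbf{S}(s)\|_{\mathcal{L}(\mathcal{H}^k)} \le C e^{-s/2}$, where the constant $C > 0$ absorbs the norm equivalence.

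I do not expect a genuine obstacle here, as the analytical work has already been done by the dissipativity and density-of-range lemmas. The one point worth flagging is the shift by $\tfrac{1}{2} I$, which is necessary because Lemma \ref{lem-dense} supplies the range condition only at the spectral value $0$, whereas the closure version of Lumer--Phillips demands it at some $\lambda$ strictly above the dissipation threshold of the operator being closed; the \emph{strict} dissipation estimate $-\tfrac{1}{2}\|\mathbf{q}\|_k^2$ in Lemma \ref{lem-dissipativity} is precisely what makes this shift compatible with the range condition, and it is also what produces the exponential decay rate $e^{-s/2}$ in the final bound.
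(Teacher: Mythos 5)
Your argument is correct and is essentially the paper's own proof: the paper likewise combines Lemma \ref{lem-dissipativity}, Lemma \ref{lem-dense}, and Lemma \ref{lem-norm} with the closure version of the Lumer--Phillips theorem \cite[p.83, Theorem 3.15]{engel2000one}. Your explicit shift $A=\tilde{\mathbf{L}}+\tfrac12 I$, which reconciles the range condition at $\lambda=0$ with the strict dissipation constant $-\tfrac12$ and produces the decay rate $e^{-s/2}$, simply spells out what the paper leaves implicit.
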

\begin{proof}
	Since $ \mathcal{D}(\tilde{\mathbf{L}})= C^{\infty}[-1,1]\times C^{\infty}[-1,1]$ is dense in $\mathcal{H}^k$, $\tilde{\mathbf{L}}$ is densely defined. By Lemma \ref{lem-dissipativity} and Lemma \ref{lem-dense}, we obtain that the operator $\tilde{\mathbf{L}}$  is a dissipative operator and $\operatorname{rg}(- \tilde{\mathbf{L}})$ is also dense in $\mathcal{H}^k$ for all $k\ge 0$. Therefore, the claim follows from the Lumer-Phillips Theorem \cite[p.83, Theorem 3.15]{engel2000one} and Lemma \ref{lem-norm}.
	
\end{proof}
Next, we deal with the potential operator $\mathbf{L}_{\alpha,1}$. Compare to the power nonlinearities \cite{merle2007existence,donninger2016blowup}, the potential $\mathbf{L}_{\alpha,1}$ is no longer a compact operator. Even worse, $\mathbf{L}_{\alpha,1}$ is not a relatively compact operator with respect to $\mathbf{L}$, either. Nevertheless, we can prove that
\begin{prop}\label{prop-S-alpha}
	For all $\alpha>0$ and $k\ge 0$, the operator $\mathbf{L}_{\alpha,1}$ is bounded on $\mathcal{H}^k$. As a corollary, $\mathbf{L}_{\alpha}:= \mathbf{L} + \mathbf{L}_{\alpha,1}$ generates a strongly continuous one-parameter semigroup $\mathbf{S}_{\alpha}:[0, \infty) \rightarrow \mathcal{L}(\mathcal{H}^k)$ which satisfies
	\begin{align*}
		\|\mathbf{S}_{\alpha}(s)\|_{\mathcal{L}(\mathcal{H}^k)} \leq C e^{\left(-\frac{1}{2} + \|\mathbf{L}_{\alpha,1}\|\right) s}
	\end{align*}
	for all $s \geq 0$, where $C>0$ is a constant independent of $\alpha$.
\end{prop}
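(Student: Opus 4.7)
The plan is to prove this in two steps: first establish that $\mathbf{L}_{\alpha,1}$ defines a bounded linear operator on $\mathcal{H}^k$, and then invoke the bounded perturbation theorem for $C_0$-semigroups to deduce that $\mathbf{L}_{\alpha}=\mathbf{L}+\mathbf{L}_{\alpha,1}$ generates a strongly continuous semigroup with the claimed growth bound. The free-wave part is already taken care of by Proposition \ref{prop-L}, and the point is simply that $\mathbf{L}_{\alpha,1}$ is a bounded (although not compact) perturbation.

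For the boundedness, I would estimate the two components of $\mathbf{L}_{\alpha,1}\mathbf{q}$ separately. The first component is the constant function equal to $q_1(-1)$, so its $H^{k+1}$-norm equals $\sqrt{2}\,|q_1(-1)|$. Since $k\ge 0$, the one-dimensional Sobolev embedding $H^{k+1}(-1,1)\hookrightarrow C^0([-1,1])$ gives $|q_1(-1)|\lesssim_k\|q_1\|_{H^{k+1}}$. For the second component, the key observation is that $\alpha>0$ forces $\sqrt{1+\alpha}>1$, so
\begin{equation*}
\sqrt{1+\alpha}+y\ \ge\ \sqrt{1+\alpha}-1\ >\ 0\qquad\text{for all }y\in[-1,1],
\end{equation*}
and therefore $y\mapsto \tfrac{2\alpha}{\sqrt{1+\alpha}+y}$ is smooth on the closed interval with all derivatives uniformly bounded in terms of $\alpha$ and $k$. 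The standard algebra/multiplication estimate for Sobolev spaces on a bounded interval then yields
\begin{equation*}
\Bigl\|\tfrac{2\alpha}{\sqrt{1+\alpha}+y}\,\partial_y q_1\Bigr\|_{H^k(-1,1)}\lesssim_{\alpha,k}\|\partial_y q_1\|_{H^k(-1,1)}\lesssim \|q_1\|_{H^{k+1}(-1,1)}.
\end{equation*}
Combining these two estimates gives $\|\mathbf{L}_{\alpha,1}\mathbf{q}\|_{\mathcal{H}^k}\lesssim_{\alpha,k}\|\mathbf{q}\|_{\mathcal{H}^k}$, so $\mathbf{L}_{\alpha,1}\in\mathcal{L}(\mathcal{H}^k)$.

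For the semigroup statement, I would use the Lumer--Phillips point of view already exploited in Proposition \ref{prop-L}. With respect to the inner product $\langle\cdot,\cdot\rangle_k$ (which by Lemma \ref{lem-norm} is equivalent to the $\mathcal{H}^k$-norm with constants independent of $\alpha$), Lemma \ref{lem-dissipativity} shows that $\mathbf{L}+\tfrac12\mathbf{I}$ is dissipative, so the contraction-type estimate $\|\mathbf{S}(s)\mathbf{q}\|_k\le e^{-s/2}\|\mathbf{q}\|_k$ holds in this equivalent norm. The bounded perturbation theorem \cite[III.1.3]{engel2000one} then asserts that $\mathbf{L}_\alpha$ generates a $C_0$-semigroup $\mathbf{S}_\alpha$ on $\mathcal{H}^k$ with
\begin{equation*}
\|\mathbf{S}_\alpha(s)\mathbf{q}\|_k\ \le\ e^{(-\frac12+\|\mathbf{L}_{\alpha,1}\|)s}\|\mathbf{q}\|_k.
\end{equation*}
Translating back to the original $\mathcal{H}^k$-norm via the equivalence constants in Lemma \ref{lem-norm} produces the stated inequality with a constant $C$ depending only on the norm equivalence, hence $\alpha$-independent, while the $\alpha$-dependence is absorbed into the exponent through $\|\mathbf{L}_{\alpha,1}\|$.

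There is no serious obstacle in this proof; the two things that could go wrong are both ruled out by structural features of the $\beta=\infty$ profile. First, the potential $\tfrac{2\alpha}{\sqrt{1+\alpha}+y}$ could in principle be singular inside the light cone, but the choice $\beta=\infty$ places its only singularity at $y=-\sqrt{1+\alpha}<-1$; second, the nonlocal term $q_1(-1)$ could fail to be controlled in $H^{k+1}$, but this is precisely the reason the trace $q_1(-1)$ was incorporated into the definition of $\langle\cdot,\cdot\rangle_0$ and is taken care of by a one-dimensional Sobolev trace bound. The price one pays is that the exponential rate $-\tfrac12+\|\mathbf{L}_{\alpha,1}\|$ is generically positive, so this proposition does not yet give stability; it only provides a well-defined semigroup on which the finer spectral decomposition of Section \ref{subsec:4.3} can then be performed.
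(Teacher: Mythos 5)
Your proposal is correct and follows essentially the same route as the paper: boundedness of $\mathbf{L}_{\alpha,1}$ on $\mathcal{H}^k$ (using that $\sqrt{1+\alpha}+y\ge\sqrt{1+\alpha}-1>0$ on $[-1,1]$ and control of the trace term $q_1(-1)$), combined with the bounded perturbation theorem applied to the dissipative free operator from Proposition \ref{prop-L}. Your added remarks on norm equivalence and the $\alpha$-independence of $C$ are consistent with, and slightly more explicit than, the paper's argument.
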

\begin{proof}
	By the definition of $\mathbf{L}_{\alpha,1}$, for any $\mathbf{q}\in \mathcal{H}^k$, we have
	\begin{align*}
		\|\mathbf{L}_{\alpha,1} \mathbf{q} \|_{\mathcal{H}^k} = \|q_1(-1)\|_{H^{k+1}} + \left\| -\frac{2\alpha}{\sqrt{1+\alpha}+y} \partial_y q_1\right\|_{H^k} \lesssim_{\alpha} \|q_1\|_{H^{k+1}} \lesssim_{\alpha} \|\mathbf{q}\|_{\mathcal{H}^k},  
	\end{align*}
	which implies that $\mathbf{L}_{\alpha,1}$ is bounded on $\mathcal{H}^k$. Thus, by the bounded perturbation theorem \cite[p. 158]{engel2000one}  $\mathbf{L}_{\alpha}$ also generates a strongly continuous one-parameter semigroup with the above growth bound.   
\end{proof}

\subsection{A new decomposition with a compact perturbation}\label{subsec:4.3}

Since $\mathbf{L}_{\alpha,1}$ is merely a bounded perturbation without any compactness compared to the free wave operator $\mathbf{L}$, we can not get any information of the essential spectrum of $\mathbf{L}_{\alpha}$, thus no uniform growth bound of the semigroup $\mathbf{S}_{\alpha}$ is available either. To overcome this difficulty, we shall introduce a new decomposition of the linearized operator $\mathbf{L}_{\alpha}$ as a new dissipative operator plus a compact perturbation. More precisely, recall that
\begin{align*}
	\mathbf{L}_{\alpha} := \begin{pmatrix}
		-y\partial_y & 1\\
		\partial_{yy} +2\partial_y U_{\alpha,\infty, \kappa} \partial_y & -1-y\partial_y
	\end{pmatrix},
\end{align*}
where the potential
$$ \partial_y U_{\alpha,\infty,\kappa}(s, y) = -\frac{\alpha}{\sqrt{1+\alpha} + y}.$$
Note that the dissipativity of $\mathbf{L}_{\alpha}$ itself is not guaranteed. Indeed, we have
\begin{align*}
	\operatorname{Re}\left\langle \mathbf{L}_{\alpha} \mathbf{q}, \mathbf{q}\right\rangle_{\dot{H}^1\times L^2} &\le  -\frac{1}{2}\left(\|\partial_y q_1\|_{L^2}^2+\|q_2\|_{L^2}^2\right) + \operatorname{Re} \int 2\partial_y U_{\alpha,\infty,\kappa} \partial_y q_1 \overline{q_2} dy \\
	&\le -\frac{1}{2}\left(\|\partial_y q_1\|_{L^2}^2+\|q_2\|_{L^2}^2\right) +  (\sqrt{1+\alpha}+1)  \left(\|\partial_y q_1\|_{L^2}^2+\|q_2\|_{L^2}^2\right)\\
	&\le (\sqrt{1+\alpha}+\frac{1}{2})  \left(\|\partial_y q_1\|_{L^2}^2+\|q_2\|_{L^2}^2\right),
\end{align*}
where we use
\begin{align*}
	\left\| \partial_y U_{\alpha,\infty,\kappa}\right\|_{L^\infty(-1,1)} \le  \left\| -\frac{\alpha}{\sqrt{1+\alpha}+y}\right\|_{L^\infty(-1,1)} = \frac{\alpha}{\sqrt{1+\alpha}-1} = \sqrt{1+\alpha}+1.
\end{align*}

To gain more dissipation, we need to consider $\mathbf{L}_{\alpha}$ in Sobolev spaces with higher regularity, i.e $\mathcal{H}^k = H^{k+1}(-1,1)\times H^k(-1,1), k\ge k_{\alpha}+2$. 
\begin{lem}[Commuting with derivatives]\label{lem-commu} For $\alpha>0$ and $ k \ge 0$, there holds
	\begin{align*}
		\partial^k \mathbf{L}_{\alpha} =\mathbf{L}_{\alpha,k} \partial^k  + \mathbf{L}_{\alpha,k}' 
	\end{align*}
	where
	\begin{align*}
		\mathbf{L}_{\alpha,k}=\begin{pmatrix}
			-k-y\partial_y & 1\\
			\partial_{yy} + 2 \partial_y U_{\alpha,\infty, \kappa} \partial_y & -1-k-y\partial_y
		\end{pmatrix},
	\end{align*}
	and $\mathbf{L}_{\alpha,k}'$ satisfies the pointwise bound
	\begin{align*}
		\left|\mathbf{L}_{\alpha,k}' \mathbf{q}\right| \lesssim\binom{0}{\sum_{j=0}^{ k-1} |\partial^{k-j+1} U_{\alpha,\infty, \kappa}|\left|\partial^{j+1} q_1\right|} \lesssim_{\alpha} \binom{0}{\sum_{j=0}^{ k-1} \left|\partial^{j+1} q_1\right|}
	\end{align*}
\end{lem}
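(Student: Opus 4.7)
The statement is essentially a bookkeeping identity obtained from the Leibniz rule, so the plan is to apply $\partial^k$ component by component to $\mathbf{L}_\alpha \mathbf{q}$ and collect the terms that carry all $k$ derivatives on $\mathbf{q}$ (these will form $\mathbf{L}_{\alpha,k}\partial^k\mathbf{q}$) separately from the lower-order ones (which will form $\mathbf{L}'_{\alpha,k}\mathbf{q}$). The only coefficients that are not constant are $y$ (polynomial of degree one) and $\partial_y U_{\alpha,\infty,\kappa}(y) = -\alpha/(\sqrt{1+\alpha}+y)$, which is smooth on $[-1,1]$ since $\sqrt{1+\alpha}-1>0$ for $\alpha>0$; so all derivatives of the coefficients are well-behaved.

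First, I would treat the first row. Because $y$ is linear in $y$, the Leibniz rule gives
\begin{equation*}
\partial^k(-y\partial_y q_1 + q_2) = -y\,\partial_y(\partial^k q_1) - k\,\partial^k q_1 + \partial^k q_2,
\end{equation*}
which is exactly the action of the first row of $\mathbf{L}_{\alpha,k}$ on $\partial^k\mathbf{q}$; no remainder is produced in this component.

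Next, for the second row I would handle the three pieces separately. The term $\partial^k \partial_{yy}q_1$ equals $\partial_{yy}(\partial^k q_1)$, and $\partial^k(-q_2 - y\partial_y q_2)$ equals $(-1-k-y\partial_y)\partial^k q_2$ by the same linear-coefficient Leibniz computation as above. The remaining piece is the genuinely nontrivial one: applying the full Leibniz rule,
\begin{equation*}
\partial^k\bigl(2\partial_y U_{\alpha,\infty,\kappa}\,\partial_y q_1\bigr) = 2\partial_y U_{\alpha,\infty,\kappa}\,\partial_y(\partial^k q_1) + \sum_{j=0}^{k-1}\binom{k}{j}\,2\,\partial^{k-j+1}U_{\alpha,\infty,\kappa}\,\partial^{j+1}q_1.
\end{equation*}
The first summand merges with the other leading-order contributions to produce the second row of $\mathbf{L}_{\alpha,k}\partial^k\mathbf{q}$, while the residual sum defines the second component of $\mathbf{L}'_{\alpha,k}\mathbf{q}$ (the first component being zero). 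This gives the decomposition $\partial^k \mathbf{L}_\alpha = \mathbf{L}_{\alpha,k}\partial^k + \mathbf{L}'_{\alpha,k}$ with the announced formulas.

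Finally, the pointwise bound on $\mathbf{L}'_{\alpha,k}\mathbf{q}$ follows immediately from the explicit formula: the first inequality is just the triangle inequality applied to the finite sum (absorbing the binomial coefficients into the implicit constant), and the second inequality uses that $\partial^m U_{\alpha,\infty,\kappa}$ is uniformly bounded on $[-1,1]$ for every $m\ge 1$ (with a constant depending on $\alpha$), since $\sqrt{1+\alpha}+y \ge \sqrt{1+\alpha}-1 > 0$ on $[-1,1]$. I do not anticipate any genuine obstacle; the only point to be careful about is the exact index range of the remainder sum (only $j=0,\ldots,k-1$ appears, because the $j=k$ term is precisely what gets absorbed into $\mathbf{L}_{\alpha,k}\partial^k\mathbf{q}$), and verifying that the constant coefficient contributions $(-k$ in the first slot and $-1-k$ in the second$)$ arise from the single Leibniz correction on $-y\partial_y$.
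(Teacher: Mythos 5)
Your proof is correct and follows essentially the same route as the paper: a Leibniz/commutator computation giving $[\partial^k, y\partial_y]=k\partial^k$ and $[\partial^k, 2\partial_y U_{\alpha,\infty,\kappa}\partial_y]=\sum_{j=0}^{k-1}\binom{k}{j}\,2\,\partial^{k-j+1}U_{\alpha,\infty,\kappa}\,\partial^{j+1}$, followed by the uniform bound on $\partial^{m}U_{\alpha,\infty,\kappa}$ coming from $\sqrt{1+\alpha}+y\ge\sqrt{1+\alpha}-1>0$ on $[-1,1]$. Nothing is missing.
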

\begin{proof}
	Direct computation yields the following formulae
	\begin{align*}
		\left[\partial^k, \partial_y U_{\alpha,\infty, \kappa} \partial_y \right]=\sum_{j=0}^{k-1} C_k^j \partial^{k-j+1} U_{\alpha,\infty, \kappa} \partial^{j+1}, \quad\left[\partial^k, y\partial_y\right]= k \partial^k .
	\end{align*}
	The pointwise bound follows from 
	\begin{align*}
		\|\partial^{k-j+1} U_{\alpha,\infty, \kappa}\|_{L^{\infty}(-1,1)} &= \left\|(k-j)! \frac{\alpha}{(\sqrt{1+\alpha} +y)^{k-j+1}}\right\|_{L^{\infty}(-1,1)} \\
		&\lesssim \frac{\alpha}{(\sqrt{1+\alpha}-1)^{k-j+1}}.
	\end{align*}
\end{proof}

For $k\ge 0$, we define a new inner product
\begin{align*}
	\langle\langle \Psi, \tilde{\Psi} \rangle\rangle_k = (\partial^k \Psi, \partial^k \tilde{\Psi})_{L^2}  + (\Psi,\tilde{\Psi})_{L^2} = \int_{-1}^1 \partial^k \Psi \overline{\partial^k \tilde{\Psi}} dy + \int_{-1}^1 \Psi \overline{\tilde{\Psi}} dy.
\end{align*}
By definition, the norm induced by the inner produce $\langle\langle \cdot, \cdot \rangle\rangle_k$ is also equivalent to the standard Sobolev norm $\|\cdot\|_{H^k(-1,1)}$. For tuples of function $\mathbf{q}=(q_1, q_2)$, we introduce the corresponding inner product
\begin{align*}
	\langle\langle \mathbf{q}, \tilde{\mathbf{q}} \rangle\rangle_k = \langle\langle q_1, \tilde{q}_1 \rangle\rangle_{k+1} +  \langle\langle q_2, \tilde{q}_2 \rangle\rangle_{k},
\end{align*}
and its associate norm is equivalent to $\|\cdot\|_{H^{k+1}\times H^k}$. To control $\mathbf{L}_{\alpha,k}'$, we prove 
\begin{lem}[Subcoercivity estimate]\label{lem-subcoer} For $k\ge 0$, there exist $\epsilon_n>0$ with $\lim _{n \rightarrow \infty} \epsilon_n= 0$ and $\{\Pi_i\}_{1\le i\le n} \subset H^{k+1}(-1,1), c_n>0$ such that for all $n \geq 0, \Psi \in H^{k+1}(-1,1)$,
	\begin{align}\label{eq:subcoer}
		\epsilon_n \langle\langle \Psi, \Psi\rangle\rangle_k \ge  \left\| \Psi\right\|^2_{H^{k}} - c_n \sum_{i=1}^n\left(\Psi, \Pi_i\right)_{L^2}^2  
	\end{align}
\end{lem}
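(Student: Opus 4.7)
The plan is to invoke the compactness of the Sobolev embedding $H^{k+1}(-1,1)\hookrightarrow H^k(-1,1)$ (Rellich--Kondrachov on the bounded interval) together with a finite-rank spectral projection to produce the desired subcoercivity. As a preliminary, I would record the norm equivalence $\|\Psi\|_{H^k}^2 \simeq \langle\langle \Psi,\Psi\rangle\rangle_k$ coming from Gagliardo--Nirenberg interpolation on $(-1,1)$ (so that $\|\partial^j\Psi\|_{L^2}^2\lesssim \|\partial^k\Psi\|_{L^2}^2+\|\Psi\|_{L^2}^2$ for $0\le j\le k$). This reduces the problem to an estimate expressed purely in standard Sobolev norms.

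The main step is the spectral argument. Let $\{\phi_j\}_{j\ge 1}$ be the $L^2(-1,1)$-orthonormal basis of Neumann Laplacian eigenfunctions, with eigenvalues $0=\lambda_1\le \lambda_2\le \cdots$ tending to $+\infty$; each $\phi_j$ is smooth and hence lies in $H^{k+1}$. Expanding $\Psi=\sum_j a_j\phi_j$ with $a_j=(\Psi,\phi_j)_{L^2}$ gives the Parseval-type equivalence $\|\Psi\|_{H^s}^2\simeq \sum_j(1+\lambda_j)^s|a_j|^2$. I would then take $\Pi_i := \phi_i$ for $1\le i\le n$ and split $\Psi = P_n\Psi+(I-P_n)\Psi$ with $P_n\Psi = \sum_{j=1}^n a_j\phi_j$. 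The low-frequency piece is controlled by the projections,
\begin{align*}
\|P_n\Psi\|_{H^k}^2 \lesssim \Big(\sum_{j=1}^n(1+\lambda_j)^k\Big)\sum_{i=1}^n(\Psi,\Pi_i)_{L^2}^2 =: c_n\sum_{i=1}^n(\Psi,\Pi_i)_{L^2}^2,
\end{align*}
while the spectral gap furnishes the compactness-type bound
\begin{align*}
\|(I-P_n)\Psi\|_{H^k}^2 \le \frac{1}{1+\lambda_{n+1}}\sum_{j>n}(1+\lambda_j)^{k+1}|a_j|^2 \lesssim \frac{1}{1+\lambda_{n+1}}\,\|\Psi\|_{H^{k+1}}^2.
\end{align*}
Summing the two contributions and setting $\epsilon_n := C(1+\lambda_{n+1})^{-1}$, which tends to zero as $n\to\infty$, one obtains $\|\Psi\|_{H^k}^2 \le \epsilon_n\|\Psi\|_{H^{k+1}}^2+c_n\sum_{i=1}^n(\Psi,\Pi_i)_{L^2}^2$; combined with the preliminary norm equivalence this yields the announced inequality.

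The main obstacle lies in reconciling the Sobolev scales in the statement: the compactness argument naturally puts the stronger $H^{k+1}$-norm on the right-hand side, so one must use the hypothesis $\Psi\in H^{k+1}$ together with the equivalence of norms on the bounded interval to identify $\langle\langle \Psi,\Psi\rangle\rangle_k$ with a quantity that genuinely dominates $\|\Psi\|_{H^k}^2$ with a small constant. Once this bookkeeping is in order, the entire analytical content is the compactness $H^{k+1}\hookrightarrow H^k$ encoded in $\lambda_n\to\infty$, and the projection onto the first $n$ Neumann eigenfunctions simultaneously supplies the family $\{\Pi_i\}$ and the constants $\epsilon_n,c_n$. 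A parallel argument with Dirichlet or even constant-plus-trigonometric bases works equally well; the Neumann choice is simply natural because $\phi_j\in C^\infty(\overline{(-1,1)})$ without any boundary condition imposed on $\Psi$.
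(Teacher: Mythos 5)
Your overall strategy --- compactness of $H^{k+1}(-1,1)\hookrightarrow L^2$ realized through a finite-rank projection onto an eigenbasis, followed by Gagliardo--Nirenberg to upgrade from $L^2$ to $H^k$ --- is the same as the paper's, and your reading of the index bookkeeping (the right-hand side of the compactness estimate must carry the full $H^{k+1}$ strength) is the intended one. However, the quantitative core of your argument contains a genuine gap: the Parseval-type equivalence $\|\Psi\|_{H^{s}}^2\simeq\sum_j(1+\lambda_j)^{s}|a_j|^2$ for the Neumann eigenbasis is \emph{false} for $s\ge 2$, because $H^{s}(-1,1)$ is then strictly larger than the domain $\mathcal{D}\bigl((1-\Delta_N)^{s/2}\bigr)$, which is cut out by boundary conditions. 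Concretely, take $\Psi(x)=x$: it is smooth, but its Neumann--cosine coefficients decay only like $j^{-2}$ (it violates $\Psi'(\pm1)=0$), so with $\lambda_j\sim j^2$ one gets $\sum_j(1+\lambda_j)^{k+1}|a_j|^2\sim\sum_j j^{2k-2}=\infty$ for every $k\ge1$, while $\|\Psi\|_{H^{k+1}}^2<\infty$. Hence your displayed high-frequency bound $\sum_{j>n}(1+\lambda_j)^{k+1}|a_j|^2\lesssim\|\Psi\|_{H^{k+1}}^2$ fails precisely in the range $k\ge1$ that the paper needs ($k\ge k_\alpha+2$), and the intermediate identification $\|(I-P_n)\Psi\|_{H^k}^2\simeq\sum_{j>n}(1+\lambda_j)^k|a_j|^2$ fails for the same reason.

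There are two ways to repair this. The paper's route avoids the Laplacian entirely: it uses the Riesz representation of $T\mapsto(T,\cdot)_{L^2}$ with respect to the $\langle\langle\cdot,\cdot\rangle\rangle$ inner product to build a compact, self-adjoint, positive operator $\iota\circ l$ on $L^2$ whose eigenbasis is adapted to the correct quadratic form, and then a min-max argument yields $\|\Psi\|_{L^2}^2\le\lambda_{n+1}\langle\langle\Psi,\Psi\rangle\rangle$ on the orthogonal complement of the first $n$ eigenfunctions --- note that only the $L^2$ norm is estimated on the complement, with Gagliardo--Nirenberg doing the rest; no spectral characterization of $H^k$ or $H^{k+1}$ is ever invoked. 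Alternatively, you may keep your Neumann basis (or any complete orthonormal system of smooth functions) but must replace the spectral calculus by a soft argument: if $\sup\{\|\Psi\|_{L^2}^2:\|\Psi\|_{H^{k+1}}\le1,\ (\Psi,\phi_j)_{L^2}=0\ \text{for}\ j\le n\}$ did not tend to $0$, a sequence of near-maximizers would converge strongly in $L^2$ (by Rellich) to a nonzero limit orthogonal to every $\phi_j$, contradicting completeness. Either fix restores the claim $\|\Psi\|_{L^2}^2\le\epsilon_n\|\Psi\|_{H^{k+1}}^2$ on the orthogonal complement, after which your low-frequency estimate and the interpolation step go through as written.
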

\begin{proof}
	Given $T \in L^2(-1,1)$, the antilinear map $h \mapsto(T, h)$ is continuous on $H^k(-1,1)$ since
	\begin{align*}
		(\Psi, \Psi)_{L^2} \leq \langle\langle\Psi, \Psi\rangle\rangle_k.
	\end{align*}
	By Riesz representation theorem, there exists a unique $l(T) \in H^k(-1,1)$ such that
	\begin{align}\label{eq:def-l}
		\forall h \in H^k(-1,1), \quad\langle\langle l(T), h\rangle\rangle_k=(T, h)_{L^2}
	\end{align}
	and $l: L^2(-1,1) \rightarrow H^k(-1,1)$ is a bounded linear map. By Sobolev embedding theorem, the operator $\iota: H^k(-1,1)\to L^2(-1,1), \iota(\Psi) =\Psi$ is compact, thus the map $\iota \circ l: L^2(-1,1) \rightarrow L^2(-1,1)$ is also compact. Moreover, if $\Psi_i=l(T_i), i=1,2$, then
	\begin{align*}
		(l(T_1), T_2)_{L^2}=(\Psi_1, T_2)_{L^2}=\langle\langle \Psi_1, l(T_2)\rangle\rangle_k=\langle\langle\Psi_1, \Psi_2\rangle\rangle_k.
	\end{align*}
	Similarly,
	\begin{align*}
		(T_1,l(T_2))_{L^2}=\langle\langle \Psi_1, \Psi_2\rangle\rangle_k=\left(l\left(T_1\right), T_2\right)_{L^2}
	\end{align*}
	i.e. $\iota \circ l$ is self-adjoint on $L^2(-1,1)$. By \eqref{eq:def-l}, 
	$$(T, l(T))_{L^2} = \langle\langle l(T), l(T)\rangle\rangle_k\ge 0,$$
	$\iota \circ l$ is also positive definite. Thus, by spectral decomposition of positive definite compact operators, there exists an $L^2$-orthonormal eigenbasis $\left(\Pi_{n, i}\right)_{1 \leq i \leq I(n)}$ of $\iota \circ l$ with positive eigenvalues $\lambda_n \rightarrow 0$. The eigenvalue equation implies $\Pi_{n, i} \in H^k(-1,1)$. Let
	\begin{align*}
		\mathcal{A}_n=\left\{\Psi \in H^k(-1,1) \mid(\Psi, \Psi)_{L^2}=1,\left(\Psi, \Pi_{j, i}\right)_{L^2}=0,1 \leq i \leq I(j), 1 \leq j \leq n\right\}
	\end{align*}
	and consider the minimization problem
	\begin{align*}
		I_n=\inf _{\Psi \in \mathcal{A}_n}\langle\langle\Psi, \Psi\rangle\rangle_k,
	\end{align*}
	whose infimum is attained at $\Psi \in \mathcal{A}_n$ since $\iota \circ l$ is compact. Also, by a standard Lagrange multiplier argument,
	\begin{align*}
		\forall h \in H^k(-1,1), \quad\langle\langle\Psi, h\rangle\rangle_k=\sum_{j=1}^n \sum_{i=1}^{I(j)} \beta_{i, j}\left(\Pi_{j, i}, h\right)_{L^2}+\beta(\Psi, h)_{L^2} .
	\end{align*}
	Set $h=\Pi_{j, i}$ and since $\Pi_{j,i}$ is an eigenvector of $\iota \circ l$, we infer $\beta_{i,j}=0$ and in view of \eqref{eq:def-l}, $l(\Psi)=\beta^{-1} \Psi$. Together with the orthogonality conditions, $\beta^{-1} \leq \lambda_{n+1}$. Hence
	\begin{align}\label{I_n}
		I_n=\langle\langle\Psi, \Psi\rangle\rangle_k=\beta(\Psi, \Psi)_{L^2} \geq \frac{1}{\lambda_{n+1}}
	\end{align}
	For all $\epsilon>0, k \geq 0$, from Gagliardo-Nirenberg interpolation inequality together with Young's inequality, we have
	\begin{align*}
		\left\| \Psi\right\|_{H^k}^2 \leq \epsilon \left\|\Psi\right\|^2_{\dot{H}^{k+1}} +c_{\epsilon, k} \|\Psi\|_{L^2}^2.
	\end{align*}
	Together with \eqref{I_n}, we have that for all $\Psi$ satisfying orthogonality condition of $\mathcal{A}_n$,
	\begin{align*}
		\left\| \Psi\right\|_{H^k}^2 \leq (\epsilon+c_{\epsilon, k} \lambda_{n+1})\langle\langle\Psi, \Psi\rangle\rangle_k.
	\end{align*}
	Choosing $\epsilon_n \rightarrow 0$ such that $c_{\epsilon_n, k} \lambda_{n+1} \leq \epsilon_n$ yields \eqref{eq:subcoer}.
\end{proof}

Based on this, we are able to show dissipativity of $\mathbf{L}_{\alpha}$ upon a compact perturbation. 
\begin{prop}[Maximal Dissipativity]\label{prop-hat-L-alpha} For $\alpha >0$, $k \ge k_{\alpha}+2$ and $\epsilon>0$ sufficiently small, there exists $\left(\mathbf{\Pi}_{\alpha,i}\right)_{1 \leq i \leq N} \in \mathcal{H}^{k}$ such that for the finite rank projection operator
	\begin{align}\label{eq:proj}
		\hat{\mathbf{P}}_{\alpha}=\sum_{i=1}^N\left\langle\langle\cdot, \mathbf{\Pi}_{\alpha,i}\right\rangle\rangle_k \mathbf{\Pi}_{\alpha,i},
	\end{align}
	the modified operator
	\begin{align*}
		\hat{\mathbf{L}}_{\alpha}=\mathbf{L}_{\alpha}-\hat{\mathbf{P}}_{\alpha}
	\end{align*}
	is dissipative:
	\begin{align}\label{eq:dissipative}
		\begin{aligned}
			\forall \mathbf{q} \in \mathcal{D}(\mathbf{L}_{\alpha}), \quad \operatorname{Re}\langle\langle-\hat{\mathbf{L}}_{\alpha} \mathbf{q}, \mathbf{q}\rangle\rangle_k &\ge \left(k-\sqrt{1+\alpha}-\frac{1}{2}-\epsilon\right)\langle\langle\mathbf{q}, \mathbf{q}\rangle\rangle_{k}\\
			&\ge \left(\frac{3}{2}-\epsilon \right)\langle\langle\mathbf{q}, \mathbf{q}\rangle\rangle_{k},
		\end{aligned}
	\end{align}
	and is maximal:
	\begin{align*}
		\lambda - \hat{\mathbf{L}}_{\alpha} \text{~is surjective~} \text{ for some } \lambda>0.
	\end{align*}
	
\end{prop}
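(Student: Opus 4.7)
The plan is to extract leading dissipation at the top order from the $-y\partial_y$ transport and the $-q_2$ damping, control the potential contribution by its $L^\infty$ norm, and absorb all sub-leading losses into a finite rank projection through the subcoercivity lemma. Maximality of $\lambda-\hat{\mathbf{L}}_\alpha$ will then follow by a compact perturbation of the generator property established in Proposition \ref{prop-S-alpha}. Using Lemma \ref{lem-commu}, I would write $\partial^k \mathbf{L}_\alpha \mathbf{q}=\mathbf{L}_{\alpha,k}\partial^k\mathbf{q}+\mathbf{L}'_{\alpha,k}\mathbf{q}$ and pair with $\partial^k\mathbf{q}$ in $L^2\times L^2$. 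Repeating the integration by parts of Lemma \ref{lem-dissipativity}, the two transport terms yield the diagonal dissipation $-(k+\tfrac12)(\|\partial^{k+1}q_1\|_{L^2}^2+\|\partial^k q_2\|_{L^2}^2)$ plus non-positive boundary quantities at $y=\pm1$; the cross terms $(\partial^{k+1}q_2,\partial^{k+1}q_1)_{L^2}$ and $(\partial^{k+2}q_1,\partial^k q_2)_{L^2}$ cancel after one more integration by parts; and the $-q_2$ damping contributes $-\|\partial^k q_2\|_{L^2}^2$. The only indefinite top-order piece, $2(\partial_y U_{\alpha,\infty,\kappa}\,\partial^{k+1}q_1,\partial^k q_2)_{L^2}$, is absorbed via Cauchy--Schwarz using $\|\partial_y U_{\alpha,\infty,\kappa}\|_{L^\infty}=\sqrt{1+\alpha}+1$, costing at most $(\sqrt{1+\alpha}+1)(\|\partial^{k+1}q_1\|^2+\|\partial^k q_2\|^2)$. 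Collecting everything,
\begin{align*}
\operatorname{Re}\langle\langle \mathbf{L}_\alpha \mathbf{q},\mathbf{q}\rangle\rangle_k \le -\bigl(k-\sqrt{1+\alpha}-\tfrac12\bigr)\bigl(\|\partial^{k+1}q_1\|_{L^2}^2+\|\partial^k q_2\|_{L^2}^2\bigr)+R_\alpha(\mathbf{q}),
\end{align*}
where the remainder $R_\alpha(\mathbf{q})$ gathers the commutator piece $\mathbf{L}'_{\alpha,k}\mathbf{q}$, the $L^2$-pairing contributions of the inner product $\langle\langle\cdot,\cdot\rangle\rangle_k$, and the boundary leftovers, and, by the pointwise bound of Lemma \ref{lem-commu}, is dominated by $C_\alpha(\|q_1\|_{H^k}^2+\|q_2\|_{H^{k-1}}^2)$.

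To absorb $R_\alpha(\mathbf{q})$, I would invoke Lemma \ref{lem-subcoer} separately for $q_1$ at level $k$ and for $q_2$ at level $k-1$. For any $\epsilon'>0$ this supplies a finite number of scalar test functions whose $L^2$-pairings with $\mathbf{q}$ are continuous linear functionals on $\mathcal{H}^k$; representing them via Riesz in the tuple inner product yields vectors $\mathbf{\Pi}_{\alpha,i}\in\mathcal{H}^k$ such that
\begin{align*}
C_\alpha\bigl(\|q_1\|_{H^k}^2+\|q_2\|_{H^{k-1}}^2\bigr)\le \epsilon'\,\langle\langle\mathbf{q},\mathbf{q}\rangle\rangle_k+\sum_{i=1}^N c_i\bigl|\langle\langle\mathbf{q},\mathbf{\Pi}_{\alpha,i}\rangle\rangle_k\bigr|^2.
\end{align*}
Defining $\hat{\mathbf{P}}_\alpha$ by \eqref{eq:proj} with suitably rescaled $\mathbf{\Pi}_{\alpha,i}$ gives $\operatorname{Re}\langle\langle\hat{\mathbf{P}}_\alpha\mathbf{q},\mathbf{q}\rangle\rangle_k\ge R_\alpha(\mathbf{q})-\epsilon'\langle\langle\mathbf{q},\mathbf{q}\rangle\rangle_k$. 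Rewriting $\|\partial^{k+1}q_1\|_{L^2}^2+\|\partial^k q_2\|_{L^2}^2=\langle\langle\mathbf{q},\mathbf{q}\rangle\rangle_k-\|q_1\|_{L^2}^2-\|q_2\|_{L^2}^2$ (the latter pieces being subsumed into $R_\alpha$) and combining with the top-order estimate yields \eqref{eq:dissipative} with $\epsilon=2\epsilon'$; the lower bound $\tfrac32-\epsilon$ then comes from $k\ge k_\alpha+2\ge\sqrt{1+\alpha}+2$.

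For maximality, Proposition \ref{prop-S-alpha} ensures that $\{\operatorname{Re}\lambda>\omega_\alpha\}\subset\rho(\mathbf{L}_\alpha)$ for some $\omega_\alpha\in\mathbb{R}$. For such $\lambda$, factor
\begin{align*}
\lambda-\hat{\mathbf{L}}_\alpha=(\lambda-\mathbf{L}_\alpha)\bigl(I+\mathbf{R}_{\mathbf{L}_\alpha}(\lambda)\hat{\mathbf{P}}_\alpha\bigr).
\end{align*}
Since $\mathbf{R}_{\mathbf{L}_\alpha}(\lambda)\hat{\mathbf{P}}_\alpha$ is finite rank, the Fredholm alternative reduces surjectivity of $\lambda-\hat{\mathbf{L}}_\alpha$ to its injectivity. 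Injectivity for every $\lambda>0$ follows at once from the dissipativity just proved: if $(\lambda-\hat{\mathbf{L}}_\alpha)\mathbf{q}=0$, then $\lambda\|\mathbf{q}\|_{\mathcal{H}^k}^2=\operatorname{Re}\langle\langle\hat{\mathbf{L}}_\alpha\mathbf{q},\mathbf{q}\rangle\rangle_k\le-(\tfrac32-\epsilon)\|\mathbf{q}\|_{\mathcal{H}^k}^2$, forcing $\mathbf{q}=0$. Thus $\lambda-\hat{\mathbf{L}}_\alpha$ is surjective for all sufficiently large $\lambda>0$, completing the proof.

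The main obstacle is the top-order bookkeeping: carefully tracking every boundary contribution at $y=\pm1$ produced by the various integrations by parts and verifying they combine non-positively rather than eroding the leading dissipation; and identifying the exact constant $\sqrt{1+\alpha}+1$ lost to the potential, which is what forces the regularity threshold $k\ge k_\alpha+2$. A secondary subtlety is constructing the subcoercivity projection consistently for both components of $\mathbf{q}$ under the tuple inner product $\langle\langle\cdot,\cdot\rangle\rangle_k$, rather than the scalar $L^2$-based one of Lemma \ref{lem-subcoer}, which requires the Riesz representation step above.
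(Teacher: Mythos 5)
Your proposal is correct and follows essentially the same route as the paper: commuting derivatives via Lemma \ref{lem-commu}, extracting the top-order dissipation $-(k-\sqrt{1+\alpha}-\tfrac12)$ with the potential costing exactly $\sqrt{1+\alpha}+1$, absorbing the lower-order remainder through the subcoercivity Lemma \ref{lem-subcoer} plus a Riesz-representation construction of the $\mathbf{\Pi}_{\alpha,i}$, and proving maximality from the factorization $\lambda-\hat{\mathbf{L}}_\alpha=(\lambda-\mathbf{L}_\alpha)(\mathbf{I}+\mathbf{R}_{\mathbf{L}_\alpha}(\lambda)\hat{\mathbf{P}}_\alpha)$ (your Fredholm-alternative-plus-injectivity ending is a harmless variant of the paper's Neumann-series argument for large $\lambda$). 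The only step you do not address is extending \eqref{eq:dissipative} from smooth tuples, where the integration by parts is justified, to all of $\mathcal{D}(\mathbf{L}_\alpha)$; the paper does this by an explicit approximation argument using Lemma \ref{lem-dense}, the dissipativity of $\mathbf{L}$, its closedness, and the boundedness of $\mathbf{L}_{\alpha,1}$ and $\hat{\mathbf{P}}_\alpha$, and this (or the observation that $(C^{\infty}[-1,1])^2$ is a core for $\mathbf{L}_\alpha$ by construction) should be added to make the proof complete.
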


\begin{proof}
	We first prove \eqref{eq:dissipative} for all $\mathbf{q} \in (C^{\infty}[-1,1])^2$. For $\mathbf{q} \in (C^{\infty}[-1,1])^2$, by Lemma \ref{lem-commu} we have 
	\begin{align*}
		\langle\langle -\mathbf{L}_{\alpha} \mathbf{q}, \mathbf{q}\rangle\rangle_k & = (-\mathbf{L}_{\alpha} \mathbf{q}, \mathbf{q})_{\dot{H}^{k+1}\times \dot{H}^k} + ( -\mathbf{L}_{\alpha} \mathbf{q}, \mathbf{q})_{L^2\times L^2}\\
		& =(-(\partial^{k+1}\mathbf{L}_{\alpha} \mathbf{q})_1, \partial^{k+1}q_1)_{L^2}+  (-(\partial^{k}\mathbf{L}_{\alpha} \mathbf{q})_2, \partial^{k}q_2)_{L^2} + ( -\mathbf{L}_{\alpha} \mathbf{q}, \mathbf{q})_{L^2\times L^2}\\
		& =  (-(\mathbf{L}_{\alpha,k+1}\partial^{k+1} \mathbf{q})_1, \partial^{k+1}q_1)_{L^2} 
		+ (-(\mathbf{L}_{\alpha,k}\partial^{k} \mathbf{q})_2, \partial^{k}q_2)_{L^2}\\
		&\quad + (-(\mathbf{L}_{\alpha,k+1}'\mathbf{q})_1, \partial^{k+1}q_1)_{L^2}  + (-(\mathbf{L}_{\alpha,k}'\mathbf{q})_2, \partial^{k}q_2)_{L^2} + ( -\mathbf{L}_{\alpha} \mathbf{q}, \mathbf{q})_{L^2\times L^2}      
	\end{align*} 
	For the first two terms, by integration by parts we derive
	\begin{align*}
		&\operatorname{Re}(-(\mathbf{L}_{\alpha,k+1}\partial^{k+1} \mathbf{q})_1, \partial^{k+1}q_1)_{L^2} 
		+ \operatorname{Re} (-(\mathbf{L}_{\alpha,k}\partial^{k} \mathbf{q})_2, \partial^{k}q_2)_{L^2}\\
		&\ge \left(k -\sqrt{1+\alpha}-\frac{1}{2}\right) \left(\|\partial^{k+1} q_1\|_{L^2}^2+\|\partial^k q_2\|_{L^2}^2 \right)
	\end{align*}
	For the last three terms, by Young's inequality we have
	\begin{align*}
		&\left|(-(\mathbf{L}_{\alpha,k+1}'\mathbf{q})_1, \partial^{k+1}q_1)_{L^2} \right| + \left|(-(\mathbf{L}_{\alpha,k}'\mathbf{q})_2, \partial^{k}q_2)_{L^2} \right|+ \left|( -\mathbf{L}_{\alpha} \mathbf{q}, \mathbf{q})_{L^2\times L^2} \right|\\
		& \le \frac{\epsilon}{2}\|\partial^k q_2\|_{L^2} + c_{\epsilon,k,\alpha} \left(\|q_1\|_{H^k} + \|q_2\|_{H^{k-1}}\right).
	\end{align*}
	Thus, we obtain
	\begin{align*}
		\operatorname{Re }\langle\langle -\mathbf{L}_{\alpha} \mathbf{q}, \mathbf{q}\rangle\rangle_k \ge \left(k -\sqrt{1+\alpha}-\frac{1}{2} -\frac{\epsilon}{2}\right) \left(\|\partial^{k+1} q_1\|_{L^2}^2+\|\partial^k q_2\|_{L^2}^2 \right) -c_{\epsilon,k,\alpha} \left(\|q_1\|_{H^k} + \|q_2\|_{H^{k-1}}\right).
	\end{align*}
	Using Lemma \ref{lem-subcoer} for $q_1$ and $q_2$ respectively, we derive
	\begin{align*}
		\operatorname{Re }\langle\langle -\mathbf{L}_{\alpha} \mathbf{q}, \mathbf{q}\rangle\rangle_k \ge& \left(k -\sqrt{1+\alpha}-\frac{1}{2} -\epsilon\right) \left(\|\partial^{k+1} q_1\|_{L^2}^2+\|\partial^k q_2\|_{L^2}^2 \right) \\
		& -c_{\epsilon,k,\alpha}' \left(\sum_{i=1}^{N_1}(q_1,\Pi^{(1)}_i)_{L^2}^2 + \sum_{i=1}^{N_2}(q_2,\Pi^{(2)}_i)_{L^2}^2\right)
	\end{align*}
	for some $(\Pi^{(1)}_i, \Pi^{(2)}_i) \in \mathcal{H}^k$. Note that linear functionals
	$$ \mathbf{q}\mapsto \sqrt{c_{\epsilon,k,\alpha}'}(q_1,\Pi^{(1)}_i)_{L^2} \text{~and~} \mathbf{q}\mapsto \sqrt{c_{\epsilon,k,\alpha}'} (q_2,\Pi^{(2)}_i)_{L^2}   $$
	are continuous on $\mathcal{H}^k$, thus by Riesz theorem, there exists $\mathbf{\Pi}_{\alpha,i} \in \mathcal{H}^k$ such that
	$$ \langle\langle \mathbf{q}, \mathbf{\Pi}_{\alpha,i}\rangle\rangle_k = \sqrt{c_{\epsilon,k,\alpha}'}(q_1,\Pi^{(1)}_i)_{L^2},\quad  1\le i\le N_1, $$
	$$ \langle\langle \mathbf{q}, \mathbf{\Pi}_{\alpha,i}\rangle\rangle_k = \sqrt{c_{\epsilon,k,\alpha}'} (q_2,\Pi^{(2)}_i)_{L^2}, \quad N_1+1\le i\le N_1+N_2:=N. $$
	Define the finite rank projection $\hat{\mathbf{P}}_{\alpha}$ as \eqref{eq:proj}, we obtain the dissipativity \eqref{eq:dissipative} for all $\mathbf{q} \in (C^{\infty}[-1,1])^2$. 
	
	Now we claim that \eqref{eq:dissipative} also holds for all 
	$\mathbf{q} \in \mathcal{D}(\mathbf{L}_{\alpha})$. Indeed, if $\mathbf{q} \in \mathcal{D}(\mathbf{L}_{\alpha})$, we have $\mathbf{q}\in\mathcal{H}^k$ and $\mathbf{L}_{\alpha}\mathbf{q}\in\mathcal{H}^k$,
	since by Proposition \ref{prop-S-alpha} $\mathbf{L}_{\alpha}$ is the generator of the semigroup $\mathbf{S}_{\alpha}$ on $\mathcal{H}^k$. Note that 
	$\mathbf{L}_{\alpha} = \mathbf{L} + \mathbf{L}_{\alpha,1}$ and $\mathbf{L}_{\alpha,1}$ is a bounded operator in $\mathcal{H}^k$, we obtain $\mathbf{L}\mathbf{q}\in\mathcal{H}^k$ as well. Since $(C^{\infty}[-1,1])^2$ is dense in $\mathcal{H}^k$, there exists a sequence $\{\mathbf{f}_n\}\subset (C^{\infty}[-1,1])^2$ such that 
	$$ \mathbf{f}_n \to  - \mathbf{L} \mathbf{q}.$$
	By the proof of Lemma \ref{lem-dense}, there exists a unique $\mathbf{q}_n \in (C^{\infty}[-1,1])^2 $ such that 
	$$ -\mathbf{L} \mathbf{q}_n = \mathbf{f}_n.$$
	We show that $\mathbf{q}_n \to \mathbf{q}$. To see that, by 
	Lemma \ref{lem-dissipativity} and $\mathbf{q}_n \in (C^{\infty}[-1,1])^2 $, we have
	\begin{align*}
		\| \mathbf{q}_n - \mathbf{q}_m \|_{k} \lesssim \|\mathbf{L}\mathbf{q}_n -\mathbf{L}\mathbf{q}_m \|_k = \|\mathbf{f}_n -\mathbf{f}_m \|_k \to 0, 
	\end{align*}
	i.e. $\{\mathbf{q}_n\}$ is a Cauchy sequence in $\mathcal{H}^k$, thus there exists a $\mathbf{q}' \in \mathcal{H}^k$ such that $\mathbf{q}_n \to \mathbf{q}'$. By the closedness of $\mathbf{L}$ we have $-\mathbf{L}\mathbf{q}' = -\mathbf{L} \mathbf{q}$. By Proposition \ref{prop-L}, $0\in \rho(\mathbf{L})$, thus $-\mathbf{L}$ is invertible. This implies that $\mathbf{q}' = \mathbf{q}$, so $\mathbf{q}_n \to \mathbf{q}$. Note that $\hat{\mathbf{L}}_{\alpha} = \mathbf{L}_{\alpha} - \hat{\mathbf{P}}_{\alpha} = \mathbf{L} + \mathbf{L}_{\alpha,1} - \hat{\mathbf{P}}_{\alpha}$ and $\mathbf{L}_{\alpha,1}, \hat{\mathbf{P}}_{\alpha}$ are bounded, we have $\hat{\mathbf{L}}_{\alpha}\mathbf{q}_n \to \hat{\mathbf{L}}_{\alpha}\mathbf{q}$. Since $\mathbf{q}_n$ satisfies \eqref{eq:dissipative}, let $n\to \infty$,  \eqref{eq:dissipative} also holds for  $\mathbf{q}$.
	
	It remains to prove that $\hat{\mathbf{L}}_{\alpha}$ is maximal. Note that by Proposition \ref{prop-S-alpha}, $\mathbf{L}_{\alpha}$ is the generator of a strongly continuous semigroup with a finite growth bound $w:= -\frac{1}{2} + \|\mathbf{L}_{\alpha,1}\|$, thus for $\lambda >0$ sufficiently large, $\lambda - \mathbf{L}_{\alpha}$ is invertible and
	$$ \|(\lambda -\mathbf{L}_{\alpha})^{-1}\|_{\mathcal{L}(\mathcal{H}^k)}\lesssim \frac{1}{\lambda -w} \lesssim \frac{2}{\lambda}.$$
	Then, since $\hat{\mathbf{P}}_{\alpha}$ is bounded,
	$$ \lambda - \hat{\mathbf{L}}_{\alpha} = \lambda -\mathbf{L}_{\alpha} + \hat{\mathbf{P}}_{\alpha} =(\lambda -\mathbf{L}_{\alpha})(\mathbf{I} +(\lambda -\mathbf{L}_{\alpha})^{-1} \hat{\mathbf{P}}_{\alpha} ) $$
	is also invertible, thus $\lambda -\hat{\mathbf{L}}_{\alpha}$ is surjective.
	
\end{proof}

\subsection{Spectral analysis of the generator}
Now we can give a sufficiently detailed description of the spectrum of $\mathbf{L}_\alpha$.
\begin{prop}\label{prop-spectrum} Let $\alpha>0$ and $k\ge k_{\alpha}+2$, we have
	\begin{align}\label{spectrum}
		\sigma\left(\mathbf{L}_\alpha\right) \subset\left\{z \in \mathbb{C}: \operatorname{Re} z \leq -1 \right\} \cup\{0,1\}
	\end{align}
	and $\{0,1\} \subset \sigma_p\left(\mathbf{L}_{\alpha}\right)$. Furthermore, the geometric eigenspaces of eigenvalues 1 and 0 are spanned by the functions $\mathbf{f}_{1,\alpha}$ and $\mathbf{f}_{0,\alpha}$, respectively, where
	\begin{align*}
		\mathbf{f}_{1,\alpha}(y)  =\begin{pmatrix}
			\frac{ \alpha\sqrt{1+\alpha}}{\sqrt{1+\alpha}+y} \\
			\frac{ \alpha(1+\alpha)}{(\sqrt{1+\alpha}+y)^2}
		\end{pmatrix} \text{~and~} \quad \mathbf{f}_{0,\alpha}(y)  =\binom{1}{0}
	\end{align*} 
	Moreover, there is a generalized eigenfunction $\mathbf{g}_{0,\alpha}$ of the eigenvalue 0
	\begin{align*}
		\mathbf{g}_{0,\alpha} = \begin{pmatrix}
			-\log(\sqrt{1+\alpha}+y) -\frac{\alpha}{2\sqrt{1+\alpha}}\frac{1}{\sqrt{1+\alpha}+y}\\
			1 -\frac{y}{\sqrt{1+\alpha}+y} +\frac{\alpha}{2\sqrt{1+\alpha}}\frac{y}{(\sqrt{1+\alpha}+y)^2}
		\end{pmatrix},
	\end{align*}
	satisfying 
	$$ \mathbf{L}_\alpha \mathbf{g}_{0,\alpha} = \mathbf{f}_{0,\alpha}, \quad  \mathbf{L}_\alpha^2 \mathbf{g}_{0,\alpha}=0.  $$
\end{prop}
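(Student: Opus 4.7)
The plan is to combine the dissipative-plus-compact decomposition of Proposition \ref{prop-hat-L-alpha} with the mode stability of Proposition \ref{prop-mode-stability} to localize the spectrum in the half-plane $\{\operatorname{Re} z > -1\}$, and then verify the eigenfunction formulas by direct substitution. The four geometric/algebraic pieces of the statement will be assembled in sequence.

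First I would use Proposition \ref{prop-hat-L-alpha}: the operator $\hat{\mathbf{L}}_{\alpha} = \mathbf{L}_{\alpha} - \hat{\mathbf{P}}_{\alpha}$ is maximally dissipative with $\operatorname{Re}\langle\langle -\hat{\mathbf{L}}_{\alpha}\mathbf{q}, \mathbf{q}\rangle\rangle_k \geq (3/2 - \epsilon)\langle\langle \mathbf{q}, \mathbf{q}\rangle\rangle_k$. By the Lumer--Phillips theorem this gives $\sigma(\hat{\mathbf{L}}_{\alpha}) \subset \{\operatorname{Re} z \leq -3/2 + \epsilon\}$. Since $\hat{\mathbf{P}}_{\alpha}$ is of finite rank hence compact, Weyl's invariance of the essential spectrum under compact perturbations \cite{kato1995} yields $\sigma_{\mathrm{ess}}(\mathbf{L}_{\alpha}) = \sigma_{\mathrm{ess}}(\hat{\mathbf{L}}_{\alpha}) \subset \{\operatorname{Re} z \leq -1\}$, so every point of $\sigma(\mathbf{L}_{\alpha})$ with $\operatorname{Re} z > -1$ is an isolated eigenvalue of finite algebraic multiplicity. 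To exclude such eigenvalues other than $\{0,1\}$, let $\mathbf{q} = (q_1, q_2) \in \mathcal{H}^k$ be an eigenfunction for $\lambda$ with $\operatorname{Re} \lambda > -1$. Then $q_1$ is a weak solution of \eqref{eigen-eq} and $q_2 = \lambda q_1 + y\partial_y q_1$. Interior elliptic regularity makes $q_1$ smooth on $(-1,1)$, and Proposition \ref{prop-1dsol-space} in the Appendix promotes $q_1$ to $C^\infty[-1,1]$. Mode stability (Proposition \ref{prop-mode-stability} via Corollary \ref{cor-eigen}) then forces $\lambda \in \{0,1\}$, completing the inclusion \eqref{spectrum}.

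For the explicit eigenfunctions, a direct substitution into the formula for $\mathbf{L}_{\alpha}$ shows $\mathbf{L}_{\alpha}\mathbf{f}_{0,\alpha} = 0$, $\mathbf{L}_{\alpha}\mathbf{f}_{1,\alpha} = \mathbf{f}_{1,\alpha}$, and $\mathbf{L}_{\alpha}\mathbf{g}_{0,\alpha} = \mathbf{f}_{0,\alpha}$ (hence $\mathbf{L}_{\alpha}^2 \mathbf{g}_{0,\alpha} = 0$). These vectors all belong to $\mathcal{H}^k$ because their only pole is at $y = -\sqrt{1+\alpha}$, which lies strictly outside $[-1,1]$ whenever $\alpha > 0$. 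This proves $\{0,1\} \subset \sigma_p(\mathbf{L}_{\alpha})$ and realizes the generalized eigenfunction. For the one-dimensionality of the geometric eigenspaces at $\lambda \in \{0,1\}$, any additional independent eigenfunction would produce, via the Lorentz-type change of variables from Section \ref{subsec:mode-stability}, a second linearly independent smooth solution of the hypergeometric equation \eqref{eigen-eq-z'}. The Frobenius analysis carried out there shows that at each of $z' = 0$ and $z' = 1$ the space of local smooth solutions is already one-dimensional, hence the global smooth solution space has dimension at most one; combined with the explicit $\mathbf{f}_{0,\alpha}$ and $\mathbf{f}_{1,\alpha}$, each geometric eigenspace is exactly one-dimensional.

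The principal subtlety lies in the regularity bridge used in the eigenvalue exclusion: an $\mathcal{H}^k$ eigenfunction is only weakly differentiable near $y = \pm 1$, whereas mode stability is stated for smooth eigenfunctions. This gap is precisely what the boundary-regularity statement in Proposition \ref{prop-1dsol-space} of the Appendix closes. Beyond this point the argument is structural, relying on Propositions \ref{prop-mode-stability} and \ref{prop-hat-L-alpha} and reducing the remaining claims to pointwise verifications.
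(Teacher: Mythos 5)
Your proposal is correct and follows essentially the same route as the paper: the dissipative-plus-finite-rank decomposition of Proposition \ref{prop-hat-L-alpha} to control the essential spectrum, mode stability together with the Frobenius/regularity result of Proposition \ref{prop-1dsol-space} to exclude unstable eigenvalues outside $\{0,1\}$, direct substitution for the explicit (generalized) eigenfunctions, and the one-dimensionality of local smooth solutions to pin down the geometric eigenspaces. Your minor variation of reading the one-dimensionality off the Lorentz-transformed hypergeometric equation rather than the Heun equation in the Appendix is equivalent and equally valid.
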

\begin{proof}
	By Proposition \ref{prop-hat-L-alpha}, $\hat{\mathbf{L}}_{\alpha}$ is maximally dissipative, thus $\sigma(\hat{\mathbf{L}}_{\alpha}) \subset \{z\in\mathbb{C}: \operatorname{Re}z \le -1\}$ for $k\ge k_{\alpha}+2$. Since $\mathbf{L}_{\alpha} = \hat{\mathbf{L}}_{\alpha} + \hat{\mathbf{P}}_{\alpha}$ and $\hat{\mathbf{P}}_{\alpha}$ is compact, we have
	$\sigma_{ess}(\mathbf{L}_{\alpha}) = \sigma_{ess}(\hat{\mathbf{L}}_{\alpha}) \subset \sigma(\hat{\mathbf{L}}_{\alpha}).$ To prove \eqref{spectrum}, it remains to show that all eigenvalues of $\mathbf{L}_{\alpha}$ in  $\{z\in\mathbb{C}: \operatorname{Re}z > -1\}$ must belong to $\{0,1\}$, which has already been verified by the mode stability of $\tilde{\mathbf{L}}_{\alpha}$, see Proposition \ref{prop-mode-stability} and Proposition \ref{prop-1dsol-space} in the Appendix.
	
	Moreover, a direct computation shows that $\mathbf{f}_{0,\alpha},\mathbf{f}_{1,\alpha}$ are eigenfunctions and $\mathbf{g}_{0,\alpha}$ is a generalized eigenfunction of 0. By the Frobenius analysis in Proposition \ref{prop-1dsol-space}, the local smooth solutions around $1$ is always one-dimensional, thus the geometric eigenspaces of eigenvalues 0 and 1 are spanned by $\mathbf{f}_{0,\alpha},\mathbf{f}_{1,\alpha}$ respectively.
\end{proof}
Now we define the usual Riesz projections associated to the eigenvalues 0 and 1 of $\mathbf{L}_\alpha$. We set
\begin{align*}
	\begin{aligned}
		\mathbf{P}_{0,\alpha} & :=\frac{1}{2 \pi \mathrm{i}} \int_{\gamma_0} \mathbf{R}_{\mathbf{L}_\alpha}(z) d z \\
		\mathbf{P}_{1,\alpha} & :=\frac{1}{2 \pi \mathrm{i}} \int_{\gamma_1} \mathbf{R}_{\mathbf{L}_\alpha}(z) d z
	\end{aligned}
\end{align*}
with the curves $\gamma_j:[0,1] \rightarrow \mathbb{C}, j \in\{0,1\}$, given by
\begin{align*}
	\gamma_0(s)=\frac{1}{4} e^{2 \pi \mathrm{i} s}, \quad \gamma_1(s)=1+\frac{1}{2} e^{2 \pi \mathrm{i} s}
\end{align*}

The following important result shows that the algebraic multiplicity of the eigenvalues 1 equals its geometric multiplicity, while the algebraic multiplicity of the eigenvalues 0 is strictly larger than the geometric multiplicity. This is different from the power nonlinear case \cite{merle2007existence,ostermann2024stable}, and the generalized 0-mode would induce a polynomial growth in time.
\begin{lem}\label{lem:rank}
	The projections $\mathbf{P}_{1,\alpha}$ and $\mathbf{P}_{0,\alpha}$ have rank 1 and 2, respectively.
\end{lem}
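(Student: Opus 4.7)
The plan is to identify $\operatorname{rank}(\mathbf{P}_{\lambda,\alpha})$ with $\dim \mathcal{M}_{\lambda}$, where $\mathcal{M}_{\lambda} := \bigcup_{n\ge 1}\ker(\lambda-\mathbf{L}_{\alpha})^n$ is the algebraic eigenspace. This identification is standard once one knows $\lambda$ is an isolated point of the spectrum of finite algebraic multiplicity, which holds here: by Proposition \ref{prop-spectrum} the points $0,1$ are separated from the rest of $\sigma(\mathbf{L}_{\alpha})$, and because the essential spectrum is contained in $\{\operatorname{Re} z\le -1\}$ the Riesz projections are of finite rank. Thus the statement reduces to (a) $\ker(\mathbf{L}_{\alpha}-1)^2=\ker(\mathbf{L}_{\alpha}-1)=\operatorname{span}\{\mathbf{f}_{1,\alpha}\}$ and (b) $\ker \mathbf{L}_{\alpha}^3=\ker \mathbf{L}_{\alpha}^2=\operatorname{span}\{\mathbf{f}_{0,\alpha},\mathbf{g}_{0,\alpha}\}$. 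Since the geometric eigenspace is one-dimensional in both cases, Jordan theory forces $\dim\ker(\lambda-\mathbf{L}_{\alpha})^{n+1}-\dim\ker(\lambda-\mathbf{L}_{\alpha})^n\le 1$, so (a) amounts to showing the equation $(\mathbf{L}_{\alpha}-1)\mathbf{g}=\mathbf{f}_{1,\alpha}$ admits no solution in $\mathcal{D}(\mathbf{L}_{\alpha})$, and (b) amounts to showing $\mathbf{L}_{\alpha}\mathbf{h}=\mathbf{g}_{0,\alpha}$ admits no such solution.

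For both non-solvability claims I would reduce the system to a single second-order ODE. Writing $(\mathbf{L}_{\alpha}-\lambda)\mathbf{q}=\mathbf{r}$ componentwise and using the first line to express $q_2=\lambda q_1+yq_1'+r_1$, the second line becomes, after collecting terms,
\begin{equation*}
(1-y^2)q_1'' - \Bigl(\tfrac{2\alpha}{\sqrt{1+\alpha}+y}+(2\lambda+2)y\Bigr)q_1' - (\lambda^2+\lambda)q_1 = R(y),
\end{equation*}
where $R(y)$ is an explicit combination of $r_1,r_1',r_2$. The homogeneous problem is exactly the eigen-equation \eqref{eigen-eq}, which we already understand via the Lorentz substitution \eqref{transform-eigen-eq}: it converts to the hypergeometric equation \eqref{eigen-eq-z'}. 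The same substitution applied to the inhomogeneous problem turns it into an inhomogeneous hypergeometric equation whose coefficients at $z'=0,1$ are precisely those analyzed in the Frobenius discussion of Section \ref{subsec:mode-stability}. I would solve the inhomogeneous hypergeometric ODE by variation of parameters, using the two Frobenius solutions $\psi_{11},\psi_{12}$ at $z'=1$, producing a particular solution that is automatically smooth at $z'=1$ (this fixes one constant using the explicit form of the mode-stable endpoint analysis). Propagating this particular solution to $z'=0$ and reading off its Frobenius expansion there, smoothness at $z'=0$ becomes a single linear compatibility condition on the free constant.

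The key step is then computing this compatibility explicitly and showing it fails for the specific right-hand sides coming from $\mathbf{f}_{1,\alpha}$ (for $\lambda=1$) and from $\mathbf{g}_{0,\alpha}$ (for $\lambda=0$). For $\lambda=1$, the Frobenius exponents at $z'=0$ are $s_+=\sqrt{1+\alpha}$ and $s_-=0$, so smooth solutions are one-dimensional (spanned by the transformed $\mathbf{f}_{1,\alpha}$); the particular solution coming from the RHS picks up a contribution proportional to $z'^{\,\sqrt{1+\alpha}}$ or a logarithmic term (depending on whether $\sqrt{1+\alpha}\in\mathbb{N}$), and one checks by a direct Wronskian/residue computation, using the known connection formulas for ${}_2F_1$, that the coefficient of this non-smooth piece is nonzero. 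The treatment for $\lambda=0$ is parallel: $\mathbf{g}_{0,\alpha}$ itself contains a $\log(\sqrt{1+\alpha}+y)$ term, and under the Lorentz substitution this generates a logarithmic forcing whose Frobenius coefficient at $z'=0$ is again nonzero, obstructing smoothness.

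The main obstacle is this last explicit residue/connection computation: everything else is organizational. The Wronskian of the two Kummer solutions at $z'=0$ has a simple closed form, but one must carefully track the interplay between the factors $(1-\gamma y')^{-\lambda}$ introduced by the Lorentz transformation, the specific polynomial-plus-logarithmic structure of the RHS, and the two distinct regimes $\sqrt{1+\alpha}\in\mathbb{N}$ vs.\ $\sqrt{1+\alpha}\notin\mathbb{N}$ (which change whether the obstruction is logarithmic or a non-integer power). Once this coefficient is shown to be nonzero for all admissible $\alpha>0$, non-solvability follows and Lemma \ref{lem:rank} is proved.
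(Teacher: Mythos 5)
Your reduction is the right one: granting finite rank of the Riesz projections and the one-dimensionality of the geometric eigenspaces from Proposition \ref{prop-spectrum}, the lemma is exactly equivalent to the non-solvability in $\mathcal{D}(\mathbf{L}_{\alpha})$ of $(\mathbf{L}_{\alpha}-1)\mathbf{g}=\mathbf{f}_{1,\alpha}$ and of $\mathbf{L}_{\alpha}\mathbf{v}=\mathbf{g}_{0,\alpha}$, and this is also how the paper argues. But your proposal stops precisely where the mathematical content of the lemma lies: the claim that the compatibility (connection/residue) coefficient produced by variation of parameters is nonzero for every $\alpha>0$ is announced as "the main obstacle" and never computed. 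Nothing in the outline guarantees this non-vanishing; the computation involves the Kummer connection formulas for ${}_2F_1$, which degenerate exactly when $c=\lambda-\sqrt{1+\alpha}\in\mathbb{Z}$ (and the obstruction changes character between a non-integer power and a logarithm depending on whether $\sqrt{1+\alpha}\in\mathbb{N}$), so the case analysis you defer is genuinely delicate and is not a routine verification. As written, the proposal establishes at most that the ranks are $1$ and $2$ \emph{unless} a certain unevaluated coefficient vanishes, which is a genuine gap. A secondary, fixable point: finite rank of $\mathbf{P}_{0,\alpha},\mathbf{P}_{1,\alpha}$ does not follow merely from "the essential spectrum lies in $\{\operatorname{Re}z\le-1\}$" for a non-self-adjoint operator; one needs the compact-perturbation structure $\mathbf{L}_{\alpha}=\hat{\mathbf{L}}_{\alpha}+\hat{\mathbf{P}}_{\alpha}$ of Proposition \ref{prop-hat-L-alpha} together with the stability theorems of Kato, as the paper does.

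The paper sidesteps the uniform-in-$\alpha$ computation entirely: since the projections depend continuously on $\alpha$, their ranks are locally constant (Kato), so it suffices to treat a single value, and at $\alpha=3$ everything is explicit. There the relevant first-order ODE for $\partial_y v_1$ is integrated in closed form with the integrating factor $p(y)=\bigl(\tfrac{1-y}{1+y}\bigr)^{2}(2+y)^{2}$, and a short Taylor expansion exhibits a $(1+y)^{2}\log(1+y)$ term forcing $\partial_y v_1\notin H^{3}$ near $y=-1$ (Lemma \ref{lem:generalized-eigenfunction}); the analogous computation handles the eigenvalue $1$. If you want to rescue your route, either carry out the connection-coefficient computation in both regimes $\sqrt{1+\alpha}\in\mathbb{N}$ and $\sqrt{1+\alpha}\notin\mathbb{N}$ and prove it is nonzero, or import the paper's rank-continuity argument and do your explicit obstruction computation only at one convenient $\alpha$.
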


\begin{proof}
	First of all we note that $\mathbf{P}_{1,\alpha}$ and $\mathbf{P}_{0,\alpha}$ have finite rank. This is because the eigenvalues 0 and 1 are generated by a compact perturbation and an eigenvalue of infinite algebraic multiplicity is invariant under such a perturbation (see \cite[p. 239, Theorem 5.28 and p. 244, Theorem 5.35]{kato1995}). By definition, $\mathbf{P}_{1,\alpha}$ and $\mathbf{P}_{0,\alpha}$ depends continuously on the parameter $\alpha$. By \cite[p. 34, Lemma 4.10]{kato1995}, the ranks of $\mathbf{P}_{1,\alpha}$ and   $\mathbf{P}_{0,\alpha}$ are constants for all $\alpha>0$. Thus, it suffices to prove the conclusion for some $\alpha>0$. For convenience, we take $\alpha=3$.  
	
	Next, observe that $\operatorname{rg} \mathbf{P}_{0,\alpha} \subset \mathcal{D}(\mathbf{L}_{\alpha})$. To see this, let $\mathbf{v} \in \operatorname{rg} \mathbf{P}_{0,\alpha}$. By the density of $\mathcal{D}(\mathbf{L}_{\alpha})$ in $\mathcal{H}^k$ we find $\left(\mathbf{u}_n\right) \subset \mathcal{D}(\mathbf{L}_{\alpha})$ with $\mathbf{u}_n \to \mathbf{v}$. Since $\mathbf{P}_{0,\alpha} \mathcal{D}(\mathbf{L}_{\alpha}) \subset \mathcal{D}(\mathbf{L}_{\alpha})$ (\cite[p. 178, Theorem 6.17]{kato1995}) we see that $\left(\mathbf{v}_n\right):=\left(\mathbf{P}_{0,\alpha} \mathbf{u}_n\right) \subset \mathcal{D}(\mathbf{L}_{\alpha}) \cap \operatorname{rg} \mathbf{P}_{0,\alpha}$ and $\mathbf{v}_n=\mathbf{P}_{0,\alpha} \mathbf{u}_n \to \mathbf{P}_{0,\alpha} \mathbf{v}=\mathbf{v}$ by the boundedness of $\mathbf{P}_{0,\alpha}$. The operator $\mathbf{L}_{\alpha} \mid_{ \mathcal{D}(\mathbf{L}_{\alpha}) \cap \mathrm{rg} \mathbf{P}_{0,\alpha}}$ is bounded and this implies $\mathbf{L}_{\alpha} \mathbf{v}_n \to \mathbf{f}$ for some $\mathbf{f} \in \operatorname{rg} \mathbf{P}_{0,\alpha}$. Thus, the closedness of $\mathbf{L}_{\alpha}$ implies $\mathbf{v} \in \mathcal{D}(\mathbf{L}_{\alpha})$ and we obtain $\operatorname{rg} \mathbf{P}_{0,\alpha} \subset \mathcal{D}(\mathbf{L}_{\alpha})$ as claimed. Let
	$\mathbf{A}:=\mathbf{L}_\alpha |_{\operatorname{rg} \mathbf{P}_{0,\alpha}}$, then $\mathbf{A}$ is a bounded operator on the finite-dimensional Hilbert space $\operatorname{rg} \mathbf{P}_{0,\alpha}$ with spectrum equal to $\{0\}$ (\cite[p. 178, Theorem 6.17]{kato1995}). This implies that $\mathbf{A}$ is nilpotent, i.e., there exists a (minimal) $n \in \mathbb{N}$ such that $\mathbf{A}^n=\mathbf{0}$. We claim that $n=2$ for $\alpha=3$.


	If $n=1$ we have $\operatorname{rg} \mathbf{P}_{0,3}=\operatorname{ker} \mathbf{A}$ and any element of $\operatorname{rg} \mathbf{P}_{0,3}$ is an eigenvector of $\mathbf{A}$ (hence $\mathbf{L}_3$) with eigenvalue 0. From Proposition \ref{prop-spectrum} we infer $\operatorname{rg} \mathbf{P}_{0,3}=\left\langle \mathbf{f}_{0,3}\right\rangle$. Since $\mathbf{P}_{0,3}\mathbf{L}_{3} \subset \mathbf{L}_{3}\mathbf{P}_{0,3}$ (\cite[p. 178, Theorem 6.17]{kato1995}), we have $\mathbf{L}_{3}\mathbf{P}_{0,3} \mathbf{g}_{0,3} =\mathbf{P}_{0,3}\mathbf{L}_{3} \mathbf{g}_{0,3} =\mathbf{P}_{0,3}\mathbf{f}_{0,3} =\mathbf{f}_{0,3} \neq 0$, this contradicts $\operatorname{rg} \mathbf{P}_{0,3}=\left\langle \mathbf{f}_{0,3}\right\rangle$, thus $n\ge 2$. Now suppose $n > 2$, then there exists a $\mathbf{u} \in \operatorname{rg} \mathbf{P}_{0,3} \subset \mathcal{D}(\mathbf{L}_{3})$ such that $\mathbf{A}^2 \mathbf{u}$ is a nontrivial element of $\operatorname{ker} \mathbf{A} \subset \operatorname{ker} \mathbf{L}_{3} = \langle\mathbf{f}_{0,3}\rangle$, i.e. $\mathbf{L}_{3}^2 \mathbf{u}=c_1\mathbf{f}_{0,3}$ for some constant $c_1 \neq 0$. Without loss of generality, we take $c_1=1$ for simplicity. Since $\mathbf{L}_{3} \mathbf{g}_{0,3} = \mathbf{f}_{0,3}$, we have $\mathbf{L}_{3}(\mathbf{L}_{3} \mathbf{u} - \mathbf{g}_{0,3}) =0$, i.e $\mathbf{L}_{3} \mathbf{u} - \mathbf{g}_{0,3} = c_2 \mathbf{f}_{0,3}$ for some constant $c_2\in \mathbb{R}$, thus
	$\mathbf{L}_{3}(\mathbf{u} - c_2\mathbf{g}_{0,3}) =  \mathbf{g}_{0,3}$. However, there are no solutions $\mathbf{v}\in \mathcal{D}(\mathbf{L}_{3})$ to the equation $\mathbf{L}_{3} \mathbf{v} =  \mathbf{g}_{0,3},$ 
	see Lemma \ref{lem:generalized-eigenfunction} in the Appendix. A similar argument shows that $\mathbf{P}_{1,3}$ has rank 1.
\end{proof}

\subsection{The resolvent estimate}\label{subsec:resolvent}
For $0<w_0<1$, we define
\begin{align*}
	\Omega_{m, n}:=\left\{z \in \mathbb{C}: \operatorname{Re} z \in\left[-w_0, m\right], \operatorname{Im} z \in\left[-n, n\right]\right\}
\end{align*}
and
\begin{align*}
	\Omega_{m,n}^{\prime}:=\left\{z \in \mathbb{C}: \operatorname{Re} z \geq-w_0\right\} \backslash \Omega_{m,n}.
\end{align*}
The next estimate confines possible unstable eigenvalues of $\mathbf{L}_\alpha$ to a compact domain $\Omega_{m, n}$.
\begin{prop}\label{prop:resolvent-esti}
	Let $\alpha_0>0$, $k\ge k_{\alpha_0}+3$, $0<w_0<1$, and $0<\epsilon < \min\{1,\frac{\alpha_0}{2}\}$,
	there exist $m, n>0$ such that $\Omega_{m, n}^{\prime} \subset \rho\left(\mathbf{L}_\alpha\right)$ and
	\begin{align}\label{eq:resolvent-esti}
		\sup_{\lambda \in \Omega_{m, n}^{\prime}}\left\|\mathbf{R}_{\mathbf{L}_\alpha}(\lambda)\right\|_{\mathcal{L}(\mathcal{H}^k)} \leq C
	\end{align}
	for all $\alpha\in \overline{B_{\epsilon}}(\alpha_0)$, where $C>0$ is a constant independent of $\alpha$.
\end{prop}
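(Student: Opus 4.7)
The plan is to establish the bound by a direct energy estimate on the resolvent equation $(\lambda - \mathbf{L}_\alpha)\mathbf{q} = \mathbf{f}$, avoiding any soft compactness-based argument (such arguments are unavailable here since $\mathbf{L}_{\alpha,1}$ is only bounded, not relatively compact with respect to $\mathbf{L}$). I would pair $(\lambda - \mathbf{L}_\alpha)\mathbf{q} = \mathbf{f}$ against $\mathbf{q}$ both in the higher-order inner product $\langle\langle\cdot,\cdot\rangle\rangle_k$ and, separately, in the plain $L^2 \times L^2$ inner product, and then split each identity into real and imaginary parts. The real part of the $\langle\langle\cdot,\cdot\rangle\rangle_k$ identity will control the top-order energy $E_k(\mathbf{q}) := \|\partial^{k+1}q_1\|_{L^2}^2 + \|\partial^k q_2\|_{L^2}^2$, while the imaginary part of the $L^2 \times L^2$ identity, combined with Gagliardo--Nirenberg interpolation, will handle the lower-order mass once $|\operatorname{Im}\lambda|$ is large; for large $\operatorname{Re}\lambda$ the full dissipativity of Proposition \ref{prop-hat-L-alpha} applies directly.

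For the real part, the pre-subcoercivity dissipativity estimate derived inside the proof of Proposition \ref{prop-hat-L-alpha} gives
\begin{align*}
\operatorname{Re}\langle\langle -\mathbf{L}_\alpha \mathbf{q},\mathbf{q}\rangle\rangle_k \;\geq\; \bigl(k-\sqrt{1+\alpha}-\tfrac{1}{2}-\tfrac{\epsilon}{2}\bigr) E_k(\mathbf{q}) - c_{\epsilon,k,\alpha}\|\mathbf{q}\|_{\mathcal{H}^{k-1}}^2.
\end{align*}
Since $k \geq k_{\alpha_0}+3$ and $\alpha \in \overline{B_{\epsilon}(\alpha_0)}$ with $\epsilon$ small, the leading coefficient exceeds $3/2 - \epsilon$. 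Combining with $\operatorname{Re}\lambda \geq -w_0 > -1$ yields
\begin{align*}
E_k(\mathbf{q}) \;\lesssim\; \|\mathbf{f}\|_k \|\mathbf{q}\|_k + (1+|\operatorname{Re}\lambda|)\|\mathbf{q}\|_{\mathcal{H}^{k-1}}^2.
\end{align*}
Gagliardo--Nirenberg interpolation provides $\|\mathbf{q}\|_{\mathcal{H}^{k-1}}^2 \leq \eta E_k(\mathbf{q}) + c_\eta \|\mathbf{q}\|_{L^2 \times L^2}^2$ for arbitrary $\eta > 0$, so for $\operatorname{Re}\lambda$ in the bounded range $|\operatorname{Re}\lambda| \leq m$ one absorbs $E_k$ on the right and obtains
\begin{align*}
\|\mathbf{q}\|_k^2 \;\lesssim\; \|\mathbf{f}\|_k \|\mathbf{q}\|_k + \|\mathbf{q}\|_{L^2 \times L^2}^2.
\end{align*}

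The remaining $L^2$ mass is handled by the imaginary part of the identity in $L^2 \times L^2$. Expanding $\mathbf{L}_\alpha\mathbf{q}$ explicitly and integrating by parts, every contribution to $\operatorname{Im}\langle \mathbf{L}_\alpha\mathbf{q},\mathbf{q}\rangle_{L^2 \times L^2}$ is dominated by $C_\alpha \|\mathbf{q}\|_{\mathcal{H}^1}^2 \leq C_\alpha \|\mathbf{q}\|_k^2$: the second-order term $\partial_{yy}q_1$ paired with $\bar q_2$ becomes, after IBP, $-\int \partial_y q_1 \partial_y \bar q_2$ plus boundary terms at $y = \pm 1$; the transport pieces $-y\partial_y q_j \cdot \bar q_j$ produce a symmetric real part and a remainder bounded by $\|q_j\|_{H^1}^2$; and the potential $\tfrac{2\alpha}{\sqrt{1+\alpha}+y}$ is a bounded multiplier. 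Hence for $|\operatorname{Im}\lambda| \geq n$ large,
\begin{align*}
|\operatorname{Im}\lambda|\|\mathbf{q}\|_{L^2 \times L^2}^2 \;\lesssim\; \|\mathbf{q}\|_k^2 + \|\mathbf{f}\|_k \|\mathbf{q}\|_k.
\end{align*}
Inserting this into the previous display and taking $n$ large enough to absorb the $\|\mathbf{q}\|_{L^2}^2$ term yields $\|\mathbf{q}\|_k \lesssim \|\mathbf{f}\|_k$. In the complementary regime $\operatorname{Re}\lambda \geq m$, the sharper dissipativity from Proposition \ref{prop-hat-L-alpha} applied directly gives
\begin{align*}
\bigl(\operatorname{Re}\lambda + \tfrac{3}{2}-\epsilon - \|\hat{\mathbf{P}}_\alpha\|_{\mathcal{L}(\mathcal{H}^k)}\bigr)\|\mathbf{q}\|_k^2 \;\leq\; \|\mathbf{f}\|_k \|\mathbf{q}\|_k,
\end{align*}
and choosing $m > \sup_{\alpha \in \overline{B_\epsilon(\alpha_0)}} \|\hat{\mathbf{P}}_\alpha\|_{\mathcal{L}(\mathcal{H}^k)}$ (finite, by continuous dependence of the construction of $\hat{\mathbf{P}}_\alpha$ on $\alpha$ together with compactness of $\overline{B_\epsilon(\alpha_0)}$) closes the estimate uniformly in $\alpha$.

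The inclusion $\Omega_{m,n}' \subset \rho(\mathbf{L}_\alpha)$ then follows from the a priori bound together with Proposition \ref{prop-spectrum} and a continuation argument: $\Omega_{m,n}'$ is connected, it meets $\rho(\mathbf{L}_\alpha)$ (e.g., for $\operatorname{Re}\lambda > M$ large, by the semigroup growth bound in Proposition \ref{prop-S-alpha}), and the uniform a priori bound prevents spectrum from accumulating at the boundary between $\rho(\mathbf{L}_\alpha)$ and $\sigma(\mathbf{L}_\alpha)$ inside $\Omega_{m,n}'$. The main technical obstacle I anticipate is the imaginary-part computation at the $L^2 \times L^2$ level: every boundary contribution arising from integration by parts at $y = \pm 1$ (where the wave operator degenerates on the light cone) must be tracked, and the resulting constants must be shown to depend continuously on $\alpha$ so that the final estimate is uniform on $\overline{B_\epsilon(\alpha_0)}$.
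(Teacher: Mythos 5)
Your proposal is correct and follows essentially the same route as the paper's proof: a direct coercivity/a priori estimate for $(\lambda-\mathbf{L}_\alpha)\mathbf{q}$ in which the real part of the top-order pairing yields dissipation through the higher-derivative estimate underlying Proposition \ref{prop-hat-L-alpha}, the imaginary part of a low-order pairing controls the $L^2$ mass once $|\operatorname{Im}\lambda|$ is large, and the large-$\operatorname{Re}\lambda$ regime is closed by a separate dissipativity/semigroup bound, all uniformly in $\alpha\in\overline{B_{\epsilon}}(\alpha_0)$. The only cosmetic differences are that the paper pairs at the low-order level with $\langle\cdot,\cdot\rangle_0$ (including the boundary term $q_1(-1)$) rather than plain $L^2\times L^2$, treats $\operatorname{Re}\lambda\ge m$ via the Hille--Yosida bound with the explicit uniform estimate on $\|\mathbf{L}_{\alpha,1}\|_{\mathcal{L}(\mathcal{H}^k)}$ instead of $\|\hat{\mathbf{P}}_{\alpha}\|$, and obtains the inclusion $\Omega_{m,n}'\subset\rho(\mathbf{L}_\alpha)$ directly from Proposition \ref{prop-spectrum}, making your continuation argument unnecessary (though valid).
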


\begin{proof}
	For $0<\epsilon < \min\{1,\frac{\alpha_0}{2}\}$ and $\alpha\in \overline{B_{\epsilon}}(\alpha_0)$, we have $\alpha\ge \frac{\alpha_0}{2}>0$ and $\alpha<\alpha_0+1$, which implies that  $k_{\alpha} \le k_{\alpha_0}+1$. Thus, $k\ge k_{\alpha_0}+3 \ge k_{\alpha}+2$ holds for all $\alpha\in \overline{B_{\epsilon}}(\alpha_0)$. For any $m, n>0$, by Proposition \ref{prop-spectrum} the condition $\lambda \in \Omega_{m, n}^{\prime}$ implies $\lambda \in \rho(\mathbf{L}_{\alpha})$. To prove the resolvent estimate \eqref{eq:resolvent-esti}, it suffices to show
	\begin{align}\label{coersive}
		\left|\left( (\lambda - \mathbf{L}_\alpha) \mathbf{q}, \mathbf{q} \right)_{\dot{H}^{k+1}\times \dot{H}^{k}}\right| + |\langle (\lambda - \mathbf{L}_\alpha) \mathbf{q}, \mathbf{q} \rangle_{0}| \gtrsim \|\mathbf{q}\|_{\mathcal{H}^k}^2 
	\end{align} 
	for all $\lambda \in \Omega_{m, n}^{\prime}$, $\alpha\in \overline{B_{\epsilon}}(\alpha_0)$ and $\mathbf{q} \in \mathcal{D}(\mathbf{L}_\alpha)$. Indeed, if \eqref{coersive} holds, then by Cauchy's inequality we have
	$$ \|\mathbf{q}\|_{\mathcal{H}^k} \lesssim \|(\lambda - \mathbf{L}_\alpha) \mathbf{q}\|_{\mathcal{H}^k}, $$
	which implies the resolvent estimate. Now we prove \eqref{coersive}. As in the proof of Proposition \ref{prop-hat-L-alpha}, it suffices to prove \eqref{coersive} for all $\mathbf{q} \in (C^{\infty}[-1,1])^2$. First, we divide the domain $\Omega_{m, n}^{\prime} = S_1\cup S_2$, where
	$$S_1 = \left\{z \in \mathbb{C}: \operatorname{Re} z \geq-w_0, |\operatorname{Im} z|\ge n\right\} $$
	and 
	$$ S_2 = \left\{z \in \mathbb{C}: \operatorname{Re} z \ge m, \operatorname{Im} z\in [-n, n] \right\}.$$
	Note that 
	\begin{align*}
		&\left( (\lambda - \mathbf{L}_\alpha) \mathbf{q}, \mathbf{q} \right)_{\dot{H}^{k+1}\times \dot{H}^{k}}\\
		& = \int_{-1}^1 \partial^{k+1} \left(\lambda q_1 + y\partial_y q_1 - q_2\right) \overline{\partial^{k+1} q_1 } dy \\
		& \quad + \int_{-1}^1 \partial^{k} \left((\lambda+1) q_2 + y\partial_y q_2 + \frac{2\alpha}{\sqrt{1+\alpha}+y} \partial_y q_1 -\partial_{yy} q_1\right) \overline{\partial^{k} q_2} dy\\
		& = \left(\frac12+k+\operatorname{Re}\lambda\right) \left(\|\partial^{k+1} q_1\|_{L^2} + \|\partial^{k} q_2\|_{L^2} \right) + \frac12|y\partial^{k+1} q_1 - \partial^{k} q_2 |^2(\pm 1) \\
		&\quad + \int_{-1}^1 \frac{2\alpha}{\sqrt{1+\alpha}+y} \partial^{k+1} q_1 \overline{\partial^{k} q_2} dy + \sum_{j=1}^{k}C_{k}^j \partial^{j}\left(\frac{2\alpha}{\sqrt{1+\alpha}+y}  \right)\partial^{k+1-j} q_1 \overline{\partial^{k} q_2} dy\\
		&\quad + i\operatorname{Im}\lambda \left(\|\partial^{k+1} q_1\|_{L^2} + \|\partial^{k} q_2\|_{L^2} \right)\\
		& \quad + \frac12 \int_{-1}^1 \left( y\partial^{2+k} q_1 \overline{\partial^{k}q_1} - y\overline{\partial^{2+k} q_1} \partial^{k}q_1 \right) dy +  \frac12 \int_{-1}^1 \left( y\partial^{k+1} q_2 \overline{\partial^{k}q_2} - y\overline{\partial^{k+1} q_2} \partial^{k}q_2 \right) dy\\
		& \quad +\int_{-1}^1 \left( \partial^{k+1} q_1 \overline{\partial^{k+1} q_2} -  \overline{\partial^{k+1} q_1}\partial^{k+1} q_2 \right)dy  + \frac12 \left( \partial^{k+1} q_1 \overline{\partial^{k} q_2}- \overline{\partial^{k+1} q_1}\partial^{k} q_2 \right)\left|_{-1}^{1}\right..
	\end{align*}
	In the above equation, the first two terms on the right hand side are real, while the terms in the last three lines are all pure imaginary. For the two terms in the second line, we have 
	\begin{align*}
		\left|\int_{-1}^1 \frac{2\alpha}{\sqrt{1+\alpha}+y} \partial^{k+1} q_1 \overline{\partial^{k} q_2} dy \right| &\le \frac{\alpha}{\sqrt{1+\alpha}-1}\left(\|\partial^{k+1} q_1\|_{L^2} + \|\partial^{k} q_2\|_{L^2} \right)\\
		&= \left(\sqrt{1+\alpha}+1\right)\left(\|\partial^{k+1} q_1\|_{L^2} + \|\partial^{k} q_2\|_{L^2} \right),
	\end{align*}
	and 
	\begin{align*}
		\left|\sum_{j=1}^{k}C_{k}^j \partial^{j}\left(\frac{2\alpha}{\sqrt{1+\alpha}+y}  \right)\partial^{k+1-j} q_1 \overline{\partial^{k} q_2} dy \right| \le M\|\partial q_1\|_{L^2} + \epsilon' \left(  \|\partial^{k+1} q_1\|_{L^2}+ \|\partial^{k} q_2\|_{L^2}\right)
	\end{align*}
	for all $\alpha\in \overline{B_{\epsilon}}(\alpha_0)$,
	where the constant $M=M(\alpha_0,\epsilon')>0$ and $\epsilon'>0$ is sufficiently small.  The constant $M$ is independent of $\alpha$ follows from the following estimates
	$$ \left|\partial^{j}\left(\frac{2\alpha}{\sqrt{1+\alpha}+y}\right)\right| \lesssim \frac{2\alpha}{(\sqrt{1+\alpha} -1)^{j+1}} = \frac{2(\sqrt{1+\alpha} +1)^{j+1}}{\alpha^{j}} \lesssim \frac{2^{j+1}(\sqrt{1+\alpha_0} +2)^{j+1}}{\alpha_0^{j}}.$$
	Since  $ k\ge k_{\alpha} +2$, we have
	\begin{align*}
		\left|\left( (\lambda - \mathbf{L}_\alpha) \mathbf{q}, \mathbf{q} \right)_{\dot{H}^{k+1}\times \dot{H}^{k}}\right| + M\|\partial q_1\|_{L^2} \ge \left(\frac32+\operatorname{Re}\lambda -\epsilon' \right) \left(\|\partial^{k+1} q_1\|_{L^2} + \|\partial^{k} q_2\|_{L^2} \right).
	\end{align*}
	Similarly, for the second term in \eqref{coersive}, we have 
	\begin{align*}
		&\left\langle (\lambda - \mathbf{L}_\alpha) \mathbf{q}, \mathbf{q} \right\rangle_{0}\\
		& = \int_{-1}^1 \partial \left(\lambda q_1 + y\partial_y q_1 - q_2\right) \overline{\partial q_1 } dy + \left(\lambda q_1 + y\partial_y q_1 - q_2\right) \overline{ q_1 }|_{-1} \\
		& \quad + \int_{-1}^1  \left((\lambda+1) q_2 + y\partial_y q_2 + \frac{2\alpha}{\sqrt{1+\alpha}+y} \partial_y q_1 -\partial_{yy} q_1\right) \overline{q_2} dy\\
		& = \left(\frac12+\operatorname{Re}\lambda\right) \left(\|\partial q_1\|_{L^2}^2 + \| q_2\|_{L^2}^2 \right) + \frac12|y\partial q_1 - q_2 |^2(\pm 1) + \operatorname{Re}\lambda |q_1(-1)|^2 \\
		&\quad + \int_{-1}^1 \frac{2\alpha}{\sqrt{1+\alpha}+y} \partial q_1 \overline{ q_2} dy + \left( y\partial_y q_1 - q_2\right) \overline{ q_1 }|_{-1} \\
		&\quad + i\operatorname{Im}\lambda \left(\|\partial q_1\|_{L^2}^2 + \| q_2\|_{L^2}^2 + |q_1(-1)|^2\right) \\
		& \quad + \frac12 \int_{-1}^1 \left( y\partial^{2} q_1 \overline{q_1} - y\overline{\partial^{2} q_1} q_1 \right) dy +  \frac12 \int_{-1}^1 \left( y\partial q_2 \overline{q_2} - y\overline{\partial q_2} q_2 \right) dy\\
		& \quad +\int_{-1}^1 \left( \partial q_1 \overline{\partial q_2} -  \overline{\partial q_1}\partial q_2 \right)dy  + \frac12 \left( \partial q_1 \overline{ q_2}- \overline{\partial q_1} q_2 \right)\left|_{-1}^{1}\right..
	\end{align*}
	In the above equation, the terms in the first line are all real, and the terms in the last three lines are all pure imaginary. For the terms in the second line, we have
	\begin{align*}
		\left|\int_{-1}^1 \frac{2\alpha}{\sqrt{1+\alpha}+y} \partial q_1 \overline{ q_2} dy \right| \le \left(\sqrt{1+\alpha} +1\right)\left(\|\partial q_1\|_{L^2}^2 + \| q_2\|_{L^2}^2 \right)
	\end{align*}
	and 
	\begin{align*}
		\left|\left( y\partial_y q_1 - q_2\right) \overline{ q_1 }|_{-1}\right|&\le (\|\partial q_1\|_{L^\infty} + \|q_2\|_{L^\infty}) \|q_1\|_{L^\infty}\lesssim (\|\partial q_1\|_{H^1} + \|q_2\|_{H^1}) \|q_1\|_{H^1}\\
		& \le \epsilon' \left(  \|\partial^{k+1} q_1\|_{L^2}+ \|\partial^{k} q_2\|_{L^2}\right) + M_0 \left(\|\partial q_1\|_{L^2}^2 + \| q_2\|_{L^2}^2 \right).
	\end{align*}
	For the terms in the last two lines, we have
	\begin{align*}
		&\left|  \frac12 \int_{-1}^1 \left( y\partial^{2} q_1 \overline{q_1} - y\overline{\partial^{2} q_1} q_1 \right) dy \right| \le \|\partial^2 q_1\|_{L^2}\|q_1\|_{L^2} \le \epsilon' \|\partial^{k+1} q_1\|_{L^2}^2 + M_1 \left(\|\partial q_1\|_{L^2}^2 + |q_1(-1)|^2\right),\\
		&\left| \frac12 \int_{-1}^1 \left( y\partial q_2 \overline{q_2} - y\overline{\partial q_2} q_2 \right) dy \right| \le \|\partial q_2\|_{L^2}\|q_2\|_{L^2} \le \epsilon' \|\partial^{k} q_2\|_{L^2}^2 + M_2 \|q_2\|_{L^2}^2,\\
		& \left|\int_{-1}^1 \left( \partial q_1 \overline{\partial q_2} -  \overline{\partial q_1}\partial q_2 \right)dy\right|\le 2\|\partial q_1\|_{L^2}\|\partial q_2\|_{L^2} \le \epsilon' \|\partial^{k} q_2\|_{L^2}^2 + M_3 \|\partial q_1\|_{L^2}^2,\\
		& \begin{aligned}
			\left|\frac12 \left( \partial q_1 \overline{ q_2}- \overline{\partial q_1} q_2 \right)\left|_{-1}^{1}\right.\right|&\le 2|\partial q_1|_{L^\infty}|q_2|_{L^\infty}\lesssim  |\partial q_1|_{H^1}|q_2|_{H^1} \\
			&\le\epsilon' \left(  \|\partial^{k+1} q_1\|_{L^2}+ \|\partial^{k} q_2\|_{L^2}\right) + M_4 \left(\|\partial q_1\|_{L^2}^2 + \| q_2\|_{L^2}^2 \right),
		\end{aligned}
	\end{align*}
	where constants $M_i>0, i=0,\dots,4$ are independent of $\alpha$. Thus, let $M' = \sum_{i=0}^4 M_i$, we obtain
	\begin{align*}
		\left| \left\langle (\lambda - \mathbf{L}_\alpha) \mathbf{q}, \mathbf{q} \right\rangle_{0} \right| +4\epsilon'\left(  \|\partial^{k+1} q_1\|_{L^2}+ \|\partial^{k} q_2\|_{L^2}\right)  \ge \left( \operatorname{Im}\lambda -\sqrt{1+\alpha}-1-M'
		\right)\left\langle \mathbf{q}, \mathbf{q} \right\rangle_{0}
	\end{align*}
	Combine the two estimates, we have 
	\begin{align*}
		&\left|\left( (\lambda - \mathbf{L}_\alpha) \mathbf{q}, \mathbf{q} \right)_{\dot{H}^{k+1}\times \dot{H}^{k}}\right| + |\langle (\lambda - \mathbf{L}_\alpha) \mathbf{q}, \mathbf{q} \rangle_{0}|\\
		&\ge \left( \operatorname{Im}\lambda -\sqrt{1+\alpha}-1-M'-M
		\right)\left\langle \mathbf{q}, \mathbf{q} \right\rangle_{0} + \left(\frac32+\operatorname{Re}\lambda -5\epsilon' \right) \left(\|\partial^{k+1} q_1\|_{L^2} + \|\partial^{k} q_2\|_{L^2} \right)\\
		& \ge \frac12 \|\mathbf{q}\|^2
	\end{align*}
	when $\epsilon'$ is sufficiently small, $\operatorname{Im}\lambda$ is sufficiently large and $\operatorname{Re}\lambda \ge -w_0$, i.e \eqref{coersive} holds for $\lambda \in S_1$ if we take $n$ large enough.
	
	It remains to prove \eqref{coersive} for $\lambda\in S_2$. In this case, by Proposition \ref{prop-S-alpha} and Hille-Yosida theorem for the strongly continuous semigroup $\mathbf{S}_{\alpha}(s)$ and its generator $\mathbf{L}_{\alpha}$, we have
	$$ \left\|\mathbf{R}_{\mathbf{L}_\alpha}(\lambda)\right\|_{\mathcal{L}(\mathcal{H}^k)} \le  \frac{C}{\operatorname{Re}\lambda-w} \le \frac{C}{m-w}, $$
	where $ w:= -\frac{1}{2} + \|\mathbf{L}_{\alpha,1}\|_{\mathcal{L}(\mathcal{H}^k)}$ controls the growth bound of $\mathbf{S}_{\alpha}(s)$, $C>0$ is a constant independent of $\alpha$. Note that by definition
	$$\|\mathbf{L}_{\alpha,1}\|_{\mathcal{L}(\mathcal{H}^k)} \lesssim \left|\partial^{k}\left(\frac{2\alpha}{\sqrt{1+\alpha}+y}\right)\right| \lesssim \frac{2^{k+1}(\sqrt{1+\alpha_0} +2)^{k+1}}{\alpha_0^{k}} $$
	for all  $\alpha\in \overline{B_{\epsilon}}(\alpha_0)$.
	Take $m$ large enough, we finish the proof of \eqref{coersive}.
	
\end{proof}

\section{Growth bound of the linearized flow}\label{sec:5}
The above spectral analysis leads to a sufficiently complete description of the linearized evolution.
\begin{prop}\label{prop-growth-bound} Let $\alpha_0>0$, $k\ge k_{\alpha_0}+3$, $0<w_0<1$, $0<\epsilon < \min\{1,\frac{\alpha_0}{2}\}$, and $\alpha\in \overline{B_{\epsilon}}(\alpha_0)$. The following properties hold:
	\begin{align*}
		\left[\mathbf{S}_\alpha(s), \mathbf{P}_{1,\alpha}\right]=\left[\mathbf{S}_\alpha(s), \mathbf{P}_{0,\alpha}\right]=\mathbf{0}
	\end{align*}
	for all $s \geq 0$ such that
	\begin{align*}
		\begin{aligned}
			\mathbf{S}_\alpha(s) \mathbf{P}_{1,\alpha} & =e^s \mathbf{P}_{1,\alpha}, \\
			\mathbf{S}_\alpha(s) \mathbf{P}_{0,\alpha} & =\mathbf{P}_{0,\alpha} + s\mathbf{L}_{\alpha}\mathbf{P}_{0,\alpha},\\
			\left\|\mathbf{S}_\alpha(s) \tilde{\mathbf{P}}_\alpha \mathbf{q}\right\|_{\mathcal{H}^k(B)} & \le M e^{-w_0 s}\left\|\tilde{\mathbf{P}}_\alpha \mathbf{q}\right\|_{\mathcal{H}^k(B)}
		\end{aligned}
	\end{align*}
	for all $s \geq 0, \mathbf{q} \in \mathcal{H}^k$, and $\alpha\in \overline{B_{\epsilon}}(\alpha_0)$, where $\tilde{\mathbf{P}}_\alpha:=\mathbf{I}-\mathbf{P}_{1,\alpha}- \mathbf{P}_{0,\alpha}$, $M = M(\alpha_0, w_0)$ is a  constant depending only on $\alpha_0,w_0$. Furthermore, we have
	\begin{align*}
		\begin{aligned}
			\operatorname{rg} \mathbf{P}_{1,\alpha} & =\left\langle\mathbf{f}_{1,\alpha}\right\rangle \\
			\operatorname{rg} \mathbf{P}_{0,\alpha} & =\left\langle\mathbf{f}_{0, \alpha}, \mathbf{g}_{0, \alpha}\right\rangle,
		\end{aligned}
	\end{align*}
	where $\mathbf{f}_{1,\alpha} \in \mathcal{H}^k$ is an eigenfunction of $\mathbf{L}_\alpha$ with eigenvalue 1,  $\mathbf{f}_{0,\alpha} \in \mathcal{H}^k$ is an eigenfunction of $\mathbf{L}_\alpha$ with eigenvalues 0, and $\mathbf{g}_{0,\alpha}\in \mathcal{H}^k$ is a generalized eigenfunction of $\mathbf{L}_\alpha$ with eigenvalues 0 satisfying $\mathbf{L}_\alpha \mathbf{g}_{0,\alpha} = \mathbf{f}_{0,\alpha}$. Finally, $\mathbf{P}_{0,\alpha} \mathbf{P}_{1,\alpha}=$ $\mathbf{P}_{1,\alpha} \mathbf{P}_{0,\alpha}=\mathbf{0}$.
\end{prop}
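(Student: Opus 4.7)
The plan is to treat the four claims in order, exploiting the spectral decomposition $\mathcal{H}^k = \operatorname{rg}\mathbf{P}_{1,\alpha} \oplus \operatorname{rg}\mathbf{P}_{0,\alpha} \oplus \operatorname{rg}\tilde{\mathbf{P}}_\alpha$ induced by the Riesz projections. The commutation relations $[\mathbf{S}_\alpha(s),\mathbf{P}_{j,\alpha}]=\mathbf{0}$ for $j\in\{0,1\}$ are a standard consequence of the fact that Riesz projections commute with the generator on $\mathcal{D}(\mathbf{L}_\alpha)$ (see \cite[Chapter IV]{engel2000one}), and hence with the whole functional calculus including the semigroup; the identity $\mathbf{P}_{0,\alpha}\mathbf{P}_{1,\alpha}=\mathbf{P}_{1,\alpha}\mathbf{P}_{0,\alpha}=\mathbf{0}$ follows in turn from the disjointness of the contours $\gamma_0,\gamma_1$ and the resolvent identity. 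For the ranges, Lemma \ref{lem:rank} combined with Proposition \ref{prop-spectrum} forces $\operatorname{rg}\mathbf{P}_{1,\alpha}=\langle\mathbf{f}_{1,\alpha}\rangle$ (rank one, containing the eigenfunction) and $\operatorname{rg}\mathbf{P}_{0,\alpha}=\langle\mathbf{f}_{0,\alpha},\mathbf{g}_{0,\alpha}\rangle$ (rank two, containing two linearly independent elements from Proposition \ref{prop-spectrum}).

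On these finite-dimensional invariant subspaces the semigroup reduces to a matrix exponential. Since $\mathbf{L}_\alpha$ acts as multiplication by $1$ on $\operatorname{rg}\mathbf{P}_{1,\alpha}$, we immediately get $\mathbf{S}_\alpha(s)\mathbf{P}_{1,\alpha}=e^s\mathbf{P}_{1,\alpha}$. The proof of Lemma \ref{lem:rank} already established that $\mathbf{L}_\alpha|_{\operatorname{rg}\mathbf{P}_{0,\alpha}}$ is nilpotent of exact order two, so the exponential series terminates after two terms, yielding $\mathbf{S}_\alpha(s)\mathbf{P}_{0,\alpha}=\mathbf{P}_{0,\alpha}+s\mathbf{L}_\alpha\mathbf{P}_{0,\alpha}$.

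The main technical step is the exponential decay on the complementary subspace, for which the plan is to apply the Gearhart--Prüss theorem (\cite[Theorem V.1.11]{engel2000one}) to the restricted semigroup on the Hilbert space $\operatorname{rg}\tilde{\mathbf{P}}_\alpha$. Its generator $\mathbf{L}_\alpha|_{\operatorname{rg}\tilde{\mathbf{P}}_\alpha}$ has spectrum $\sigma(\mathbf{L}_\alpha)\setminus\{0,1\}\subset\{\operatorname{Re} z\leq -1\}$ by Proposition \ref{prop-spectrum}, and its resolvent coincides with $\mathbf{R}_{\mathbf{L}_\alpha}(\lambda)\tilde{\mathbf{P}}_\alpha$ for $\operatorname{Re}\lambda>-w_0$. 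To invoke Gearhart--Prüss at abscissa $-w_0$ one needs a uniform bound on $\|\mathbf{R}_{\mathbf{L}_\alpha}(\lambda)\tilde{\mathbf{P}}_\alpha\|_{\mathcal{L}(\mathcal{H}^k)}$ for $\operatorname{Re}\lambda\geq -w_0$. On the exterior region $\Omega_{m,n}^{\prime}$ Proposition \ref{prop:resolvent-esti} furnishes this bound directly and uniformly in $\alpha\in\overline{B_{\epsilon}}(\alpha_0)$. On the compact rectangle $\Omega_{m,n}$, the composition $\mathbf{R}_{\mathbf{L}_\alpha}(\lambda)\tilde{\mathbf{P}}_\alpha$ extends holomorphically across the isolated eigenvalues $0$ and $1$, because $\tilde{\mathbf{P}}_\alpha$ annihilates the singular parts of the Laurent expansions of $\mathbf{R}_{\mathbf{L}_\alpha}$ at these points; hence the extended map is continuous on $\Omega_{m,n}\times\overline{B_{\epsilon}}(\alpha_0)$, and compactness yields a uniform bound.

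The hard part is ensuring the constant $M$ depends only on $\alpha_0$ and $w_0$, not on the individual $\alpha$. This is handled by the $\alpha$-uniform resolvent bound of Proposition \ref{prop:resolvent-esti} together with the $\alpha$-continuity of the Riesz projections and of the holomorphically extended resolvent $\mathbf{R}_{\mathbf{L}_\alpha}(\cdot)\tilde{\mathbf{P}}_\alpha$ over the compact parameter set $\overline{B_{\epsilon}}(\alpha_0)$ and the compact $\lambda$-set $\Omega_{m,n}$; continuity follows from $\mathbf{L}_{\alpha,1}$ depending continuously on $\alpha$ in operator norm on $\mathcal{H}^k$, and compactness then promotes this to a uniform bound. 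Feeding the resulting uniform bound on $\{\operatorname{Re}\lambda\geq -w_0\}$ into Gearhart--Prüss produces the claim $\|\mathbf{S}_\alpha(s)\tilde{\mathbf{P}}_\alpha\|_{\mathcal{L}(\mathcal{H}^k)}\leq M e^{-w_0 s}$ with $M=M(\alpha_0,w_0)$, completing the proposition.
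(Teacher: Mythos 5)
Your proposal is correct and follows essentially the same route as the paper: commutation of the semigroup with the Riesz projections, identification of the ranges via Proposition \ref{prop-spectrum} and Lemma \ref{lem:rank}, explicit evaluation of $\mathbf{S}_\alpha(s)$ on the finite-dimensional invariant subspaces (the paper integrates the ODE in $s$, you invoke the nilpotent matrix exponential, which is the same computation), and finally the uniform resolvent bound of Proposition \ref{prop:resolvent-esti} fed into a quantitative Gearhart--Pr\"uss(-Greiner) argument on $\operatorname{rg}\tilde{\mathbf{P}}_\alpha$. The only difference is cosmetic: you spell out the holomorphic extension of $\mathbf{R}_{\mathbf{L}_\alpha}(\cdot)\tilde{\mathbf{P}}_\alpha$ across $\{0,1\}$ and the compactness-in-$(\lambda,\alpha)$ argument on $\Omega_{m,n}$, which the paper leaves implicit when citing the adaptation in \cite[Appendix A]{ostermann2024stable}.
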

\begin{proof}
	Any strongly continuous semigroup commutes with its generator and hence also with the resolvent of its generator. This implies that $\mathbf{S}_{\alpha}$ commutes with the spectral projections $\mathbf{P}_{\lambda,\alpha}, \lambda \in \{0,1\}$. By Proposition \ref{prop-spectrum} and Lemma \ref{lem:rank}, we have 
	$$\operatorname{rg}\mathbf{P}_{1,\alpha} = \langle \mathbf{f}_{1,\alpha}\rangle, \quad \operatorname{rg} \mathbf{P}_{0,\alpha}  =\left\langle\mathbf{f}_{0, \alpha}, \mathbf{g}_{0, \alpha}\right\rangle.$$ 
	Thus, we derive
	\begin{align*}
		\partial_s \mathbf{S}_\alpha(s) \mathbf{P}_{1,\alpha} \mathbf{q}= \mathbf{L}_{\alpha}\mathbf{S}_\alpha(s) \mathbf{P}_{1,\alpha} \mathbf{q} =\mathbf{S}_\alpha(s) \mathbf{L}_{\alpha} \mathbf{P}_{1,\alpha} \mathbf{q} = \mathbf{S}_\alpha(s) \mathbf{P}_{1,\alpha} \mathbf{q},
	\end{align*}
	and
	\begin{align*}
		&\partial_s \mathbf{S}_\alpha(s) \mathbf{P}_{0,\alpha} \mathbf{q}= \mathbf{L}_{\alpha}\mathbf{S}_\alpha(s) \mathbf{P}_{0,\alpha} \mathbf{q} =\mathbf{S}_\alpha(s) \mathbf{L}_{\alpha} \mathbf{P}_{0,\alpha} \mathbf{q},\\
		&\partial_s \mathbf{S}_\alpha(s) \mathbf{L}_{\alpha} \mathbf{P}_{0,\alpha} \mathbf{q} =\mathbf{S}_\alpha(s) \mathbf{L}^2_{\alpha} \mathbf{P}_{0,\alpha} \mathbf{q}=0,
	\end{align*}
	which yields 
	\begin{align*}
		&\mathbf{S}_\alpha(s) \mathbf{P}_{1,\alpha} \mathbf{q} = e^{s} \mathbf{P}_{1,\alpha} \mathbf{q},\\
		&\mathbf{S}_\alpha(s) \mathbf{P}_{0,\alpha} \mathbf{q} =  s\mathbf{L}_{\alpha}\mathbf{P}_{0,\alpha} \mathbf{q} +  \mathbf{P}_{0,\alpha} \mathbf{q}.
	\end{align*}
	
	It remains to prove the uniform growth bound of $\mathbf{S}_\alpha(s)|_{\operatorname{rg} (\tilde{\mathbf{P}}_\alpha)}$.   By \cite[p. 178, Theorem 6.17]{kato1995}, $\sigma(\mathbf{L}_{\alpha}|_{\operatorname{rg}(\tilde{\mathbf{P}}_\alpha)}) \subset \left\{z \in \mathbb{C}: \operatorname{Re} z \leq -1\right\}$ and the resolvent $\mathbf{R}_{\mathbf{L}_{\alpha}|_{\operatorname{rg}(\tilde{\mathbf{P}}_\alpha)}} (\lambda)$ is analytic in $\Omega_{m, n}$ for all $m, n>0$. By Proposition \ref{prop:resolvent-esti}, we have
	\begin{align*}    	\left\|\mathbf{R}_{\mathbf{L}_{\alpha}|_{\operatorname{rg}(\tilde{\mathbf{P}}_\alpha)}}(\lambda)\right\| \leq C
	\end{align*}
	for all $\lambda \in \left\{z \in \mathbb{C}: \operatorname{Re} z \ge -w_0\right\}$ and $\alpha\in \overline{B_{\epsilon}}(\alpha_0)$, where the constant $C>0$ is independent of $\alpha$. Then, the uniform growth bound follows from an adaption of the Gearhart-Prüss-Greiner Theorem (\cite[p. 302, Theorem 1.11]{engel2000one}), see also \cite[Appendix A]{ostermann2024stable}.
	
\end{proof}

\section{Nonlinear stability}\label{sec:6}
Now we derive the nonlinear stability. The full nonlinear equation can be written as 
\begin{align}\label{eq:q-nonlinear}
	\left\{\begin{aligned}
		&\partial_s \mathbf{q} = \mathbf{L}_\alpha \mathbf{q} + \mathbf{N}(\mathbf{q}),\\
		&\mathbf{q}(0,\cdot)  = \mathbf{q}_0 
	\end{aligned}    \right.
\end{align}
where 
$$  \mathbf{N}(\mathbf{q}) := \begin{pmatrix}
	0\\
	(\partial_y q_1)^2
\end{pmatrix}. $$
By Duhamel's principle, we have 
\begin{align*}
	\mathbf{q}(s,\cdot) = \mathbf{S}_\alpha(s)\mathbf{q}(0,\cdot) + \int_0^{s} \mathbf{S}_\alpha(s-\tau) N(\mathbf{q}(\tau)) d\tau.
\end{align*}


\subsection{Stabilized nonlinear evolution}
\begin{defn} Let $k\ge k_{\alpha}+2$, $0< w_0<1$ be arbitrary but fixed. We define a Banach space $(\mathcal{X}^k(B), \|\cdot\|_{\mathcal{X}^k(B)})$ by
	\begin{align*}
		\mathcal{X}^k(B) &:=\left\{ \mathbf{q} \in C([0,\infty), \mathcal{H}^k(B)) \mid \|\mathbf{q}(s)\|_{\mathcal{H}^k(B)} \lesssim e^{-w_0s} \text{~for all~} s\ge 0 \right\}\\
		\|\mathbf{q}\|_{\mathcal{X}^k(B)} &:= \sup_{s\ge 0} \left( e^{w_0s}\|\mathbf{q}\|_{\mathcal{H}^k(B)} \right)
	\end{align*}
\end{defn}
We also define for $\epsilon>0$ closed balls
$$ \mathcal{H}_{\epsilon}^k(B) = \{ \mathbf{f}\in  \mathcal{H}^k(B) \mid   \|\mathbf{f}\|_{\mathcal{H}^k(B)} \le \epsilon \}$$
and 
$$ \mathcal{X}_{\epsilon}^k(B) = \{ \mathbf{q}\in  \mathcal{X}^k(B) \mid   \|\mathbf{q}\|_{\mathcal{X}^k(B)} \le \epsilon \}. $$

To avoid the unstable modes, we first consider a modified problem following Lyapunov-Perron method from the dynamical systems theory. Set $\mathbf{P}_{\alpha}:= \mathbf{P}_{0,\alpha}+ \mathbf{P}_{1,\alpha} = \mathbf{I}- \tilde{\mathbf{P}}_{\alpha}$, we introduce the following correction term 
\begin{align*}
	\mathbf{C}_{\alpha}(\mathbf{f},\mathbf{q}) =& \mathbf{P}_{\alpha}\mathbf{f} +\mathbf{P}_{0,\alpha}\int_0^{\infty} \mathbf{N}(\mathbf{q}(\tau)) d\tau\\
	&+\mathbf{L}_{\alpha}\mathbf{P}_{0,\alpha}\int_0^{\infty} (-\tau)\mathbf{N}(\mathbf{q}(\tau)) d\tau + \mathbf{P}_{1,\alpha} \int_0^{\infty} e^{-\tau} \mathbf{N}(\mathbf{q}(\tau)) d\tau
\end{align*}

\begin{lem}\label{lem-nonlinearity} For all $k\ge 2$, 
	\begin{align*}
		\| \mathbf{N}(\mathbf{q})\|_{\mathcal{H}^k}& \lesssim \|\mathbf{q}\|_{\mathcal{H}^k}^2,\\
		\|\mathbf{N}(\mathbf{q}) - \mathbf{N}(\mathbf{q}')\|_{\mathcal{H}^k}& \lesssim (\|\mathbf{q}\|_{\mathcal{H}^k} + \|\mathbf{q}'\|_{\mathcal{H}^k})\|\mathbf{q} - \mathbf{q}'\|_{\mathcal{H}^k}.
	\end{align*}
\end{lem}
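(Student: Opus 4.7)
The plan is to reduce both estimates to the Banach algebra property of $H^k(-1,1)$ in one spatial dimension. Since $\mathbf{N}(\mathbf{q}) = (0, (\partial_y q_1)^2)^T$ has a vanishing first component, the $\mathcal{H}^k = H^{k+1}(-1,1) \times H^k(-1,1)$ norm of $\mathbf{N}(\mathbf{q})$ reduces to estimating $(\partial_y q_1)^2$ in $H^k(-1,1)$, and for $\mathbf{q} \in \mathcal{H}^k$ we have $\|\partial_y q_1\|_{H^k} \le \|q_1\|_{H^{k+1}} \le \|\mathbf{q}\|_{\mathcal{H}^k}$, so the whole question becomes: how does the $H^k$ norm of a square compare to the $H^k$ norm of the function?

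The key tool is the fact that on a bounded 1D interval, $H^k(-1,1)$ is a Banach algebra whenever $k \ge 1$, since $H^1(-1,1) \hookrightarrow L^\infty(-1,1)$ and the Leibniz rule combined with Gagliardo–Nirenberg interpolation yields
\begin{equation*}
\|fg\|_{H^k(-1,1)} \lesssim \|f\|_{H^k(-1,1)} \|g\|_{H^k(-1,1)}.
\end{equation*}
Applying this with $f = g = \partial_y q_1$ gives the first inequality:
\begin{equation*}
\|\mathbf{N}(\mathbf{q})\|_{\mathcal{H}^k} = \|(\partial_y q_1)^2\|_{H^k} \lesssim \|\partial_y q_1\|_{H^k}^2 \lesssim \|\mathbf{q}\|_{\mathcal{H}^k}^2.
\end{equation*}
The requirement $k \ge 2$ stated in the lemma is more than sufficient; $k \ge 1$ already suffices for this step.

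For the Lipschitz-type second estimate, I would use the factorization $a^2 - b^2 = (a+b)(a-b)$ with $a = \partial_y q_1$, $b = \partial_y q_1'$:
\begin{equation*}
(\partial_y q_1)^2 - (\partial_y q_1')^2 = \bigl(\partial_y q_1 + \partial_y q_1'\bigr)\bigl(\partial_y q_1 - \partial_y q_1'\bigr).
\end{equation*}
Applying the same algebra estimate and then passing to the $\mathcal{H}^k$ norm yields
\begin{equation*}
\|\mathbf{N}(\mathbf{q}) - \mathbf{N}(\mathbf{q}')\|_{\mathcal{H}^k} \lesssim \bigl(\|q_1\|_{H^{k+1}} + \|q_1'\|_{H^{k+1}}\bigr)\|q_1 - q_1'\|_{H^{k+1}} \lesssim \bigl(\|\mathbf{q}\|_{\mathcal{H}^k} + \|\mathbf{q}'\|_{\mathcal{H}^k}\bigr)\|\mathbf{q} - \mathbf{q}'\|_{\mathcal{H}^k},
\end{equation*}
as desired.

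There is no serious obstacle here: this is a routine Moser-type estimate, and the only technical point is invoking the 1D Sobolev algebra property on the bounded interval $(-1,1)$. Both bounds follow in a few lines once this property is in hand.
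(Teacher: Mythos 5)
Your proposal is correct and follows essentially the same route as the paper, whose proof is a one-line appeal to the Cauchy and Gagliardo--Nirenberg inequalities: your invocation of the Banach algebra property of $H^k(-1,1)$ (via $H^1\hookrightarrow L^\infty$, Leibniz, and interpolation) together with the factorization $a^2-b^2=(a+b)(a-b)$ is precisely the standard Moser-type argument being alluded to there. The observation that $k\ge 1$ already suffices for these two bounds is also accurate; the stronger hypothesis $k\ge 2$ plays no role in this particular lemma.
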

\begin{proof}
	These estimates follow from the Cauchy inequality and Gagliardo-Nirenberg inequality.
\end{proof}

\begin{prop}\label{prop-modified-existence}
	Let $\alpha_0>0, k\ge k_{\alpha_0}+3$, $0< w_0<1$. There are constants $\epsilon_0>0$ and $C_0>1$ (depending on $\alpha_0, k$ and $w_0$),
	such that for all $\alpha \in \overline{B_{\epsilon_0}}(\alpha_0)$ and $\mathbf{f}\in \mathcal{H}^k(B)$ with $\|\mathbf{f}\|_{\mathcal{H}^k(B)}\le \frac{\epsilon_0}{C_0}$, there is a unique solution $\mathbf{q}_{\alpha}\in \mathcal{X}^k(B)$ satisfying $\|\mathbf{q}_{\alpha}\|_{\mathcal{X}^k(B)}\le \epsilon_0$ and 
	\begin{align}\label{eq:modified-system}
		\mathbf{q}_{\alpha}(s) = \mathbf{S}_{\alpha}(s)(\mathbf{f} -\mathbf{C}_{\alpha}(\mathbf{f},\mathbf{q}_{\alpha}) ) + \int_0^{s} \mathbf{S}_{\alpha}(s-\tau) \mathbf{N}(\mathbf{q}_{\alpha}(\tau)) d\tau
	\end{align}
	for all $s\ge 0$. Moreover, the data-to-solution map 
	$$ \mathcal{H}^k_{\frac{\epsilon_0}{C_0}}(B) \to \mathcal{X}^k(B), \quad \mathbf{f}\mapsto \mathbf{q}_{\alpha}, $$
	is Lipschitz continuous.
\end{prop}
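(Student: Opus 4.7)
The plan is to recast the identity \eqref{eq:modified-system} as a fixed point problem
\begin{align*}
\mathbf{K}_\alpha(\mathbf{f},\mathbf{q})(s) := \mathbf{S}_\alpha(s)\bigl(\mathbf{f}-\mathbf{C}_\alpha(\mathbf{f},\mathbf{q})\bigr) + \int_0^s \mathbf{S}_\alpha(s-\tau)\mathbf{N}(\mathbf{q}(\tau))\,d\tau
\end{align*}
on the closed ball $\mathcal{X}^k_{\epsilon_0}(B)$, and solve it by Banach's contraction mapping theorem. The correction $\mathbf{C}_\alpha$ is engineered precisely so that the components of $\mathbf{K}_\alpha(\mathbf{f},\mathbf{q})$ along the unstable subspaces $\operatorname{rg}\mathbf{P}_{0,\alpha}$ and $\operatorname{rg}\mathbf{P}_{1,\alpha}$ become tail integrals that decay exponentially in $s$; without this subtraction the would-be solution would exhibit $e^s$ and polynomial growth produced respectively by $\mathbf{f}_{1,\alpha}$ and the generalized eigenmode $\mathbf{g}_{0,\alpha}$.

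First I would decompose $\mathbf{K}_\alpha(\mathbf{f},\mathbf{q})=\tilde{\mathbf{P}}_\alpha\mathbf{K}_\alpha+\mathbf{P}_{0,\alpha}\mathbf{K}_\alpha+\mathbf{P}_{1,\alpha}\mathbf{K}_\alpha$ and treat each summand using the explicit formulas of Proposition \ref{prop-growth-bound}. On $\tilde{\mathbf{P}}_\alpha$ the correction vanishes, the bound $\|\mathbf{S}_\alpha(s)\tilde{\mathbf{P}}_\alpha\|\le Me^{-w_0 s}$ applies directly, and Lemma \ref{lem-nonlinearity} combined with $\|\mathbf{q}(\tau)\|_{\mathcal{H}^k}^2\lesssim e^{-2w_0\tau}\|\mathbf{q}\|_{\mathcal{X}^k}^2$ yields $e^{-w_0 s}$ decay of the Duhamel integral after computing $\int_0^s e^{-w_0(s-\tau)}e^{-2w_0\tau}d\tau$. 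On the $\mathbf{P}_{1,\alpha}$ component, the identity $\mathbf{S}_\alpha(s)\mathbf{P}_{1,\alpha}=e^s\mathbf{P}_{1,\alpha}$ combines with the $e^{-\tau}$ weight inside $\mathbf{C}_\alpha$ to collapse the expression to $-e^s\mathbf{P}_{1,\alpha}\int_s^\infty e^{-\tau}\mathbf{N}(\mathbf{q}(\tau))d\tau$, bounded by $e^{-2w_0 s}\|\mathbf{q}\|_{\mathcal{X}^k}^2$. On the $\mathbf{P}_{0,\alpha}$ component, using $\mathbf{S}_\alpha(s)\mathbf{P}_{0,\alpha}=\mathbf{P}_{0,\alpha}+s\mathbf{L}_\alpha\mathbf{P}_{0,\alpha}$ and $\mathbf{L}_\alpha^2\mathbf{P}_{0,\alpha}=0$, the two integrals defining $\mathbf{C}_\alpha$ are chosen precisely so that the result reduces to $-\mathbf{P}_{0,\alpha}\int_s^\infty\mathbf{N}(\mathbf{q}(\tau))d\tau-s\mathbf{L}_\alpha\mathbf{P}_{0,\alpha}\int_s^\infty\mathbf{N}(\mathbf{q}(\tau))d\tau+\mathbf{L}_\alpha\mathbf{P}_{0,\alpha}\int_s^\infty\tau\mathbf{N}(\mathbf{q}(\tau))d\tau$, all bounded by $(1+s)e^{-2w_0 s}\|\mathbf{q}\|_{\mathcal{X}^k}^2\lesssim e^{-w_0 s}\|\mathbf{q}\|_{\mathcal{X}^k}^2$.

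Adding the three pieces produces the estimate $\|\mathbf{K}_\alpha(\mathbf{f},\mathbf{q})\|_{\mathcal{X}^k}\le C_1\|\mathbf{f}\|_{\mathcal{H}^k}+C_2\|\mathbf{q}\|_{\mathcal{X}^k}^2$, and the Lipschitz portion of Lemma \ref{lem-nonlinearity} yields the analogous bound $\|\mathbf{K}_\alpha(\mathbf{f},\mathbf{q})-\mathbf{K}_\alpha(\mathbf{f},\mathbf{q}')\|_{\mathcal{X}^k}\le C_3(\|\mathbf{q}\|_{\mathcal{X}^k}+\|\mathbf{q}'\|_{\mathcal{X}^k})\|\mathbf{q}-\mathbf{q}'\|_{\mathcal{X}^k}$. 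Because every constant produced by Proposition \ref{prop-growth-bound} is uniform on $\overline{B_{\epsilon_0}}(\alpha_0)$, we may choose $\epsilon_0$ small in terms of $C_2,C_3$ and set $C_0=2C_1$, so that $\mathbf{K}_\alpha(\mathbf{f},\cdot)$ is a self-map of $\mathcal{X}^k_{\epsilon_0}(B)$ and a contraction with Lipschitz constant $\le \tfrac12$. The fixed point theorem then delivers the unique $\mathbf{q}_\alpha$, and applying the contraction inequality to the difference of fixed points corresponding to data $\mathbf{f},\mathbf{f}'$ gives the Lipschitz continuity of the data-to-solution map.

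The step I expect to require the most care is the analysis of the $\mathbf{P}_{0,\alpha}$ component, because the semigroup acquires a polynomial prefactor $s$ from the Jordan block at $0$; a naive estimate would only give $se^{-w_0 s}$ and violate the $\mathcal{X}^k$ norm. The saving grace is that $\mathbf{N}$ is quadratic, so $\mathbf{N}(\mathbf{q}(\tau))$ decays at the doubled rate $e^{-2w_0\tau}$ on $\mathcal{X}^k_{\epsilon_0}$; together with $w_0<1$ this extra decay absorbs the linear factor $s$ and restores the required exponential decay. All remaining work is bookkeeping: verifying that $\mathbf{C}_\alpha(\mathbf{f},\mathbf{q})$ is well defined as a convergent Bochner integral (which follows since $\tau\mathbf{N}(\mathbf{q}(\tau))$ is $L^1$ in $\tau$), and that the constants produced by the uniform resolvent bound of Proposition \ref{prop:resolvent-esti} depend only on $\alpha_0,k,w_0$.
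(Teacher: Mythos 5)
Your proposal is correct and follows essentially the same route as the paper: the same fixed-point map $\mathbf{K}_\alpha$, the same reduction of $\mathbf{K}_\alpha$ via the correction term and the formulas $\mathbf{S}_\alpha(s)\mathbf{P}_{1,\alpha}=e^s\mathbf{P}_{1,\alpha}$, $\mathbf{S}_\alpha(s)\mathbf{P}_{0,\alpha}=\mathbf{P}_{0,\alpha}+s\mathbf{L}_\alpha\mathbf{P}_{0,\alpha}$ to tail integrals decaying like $e^{-w_0 s}$, the same contraction estimate from Lemma \ref{lem-nonlinearity}, and the same treatment of Lipschitz dependence on the data. The only difference is presentational (you estimate the three spectral components separately rather than writing the combined identity \eqref{eq:K} at once), so no comparison is needed.
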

\begin{proof}
	By the definition of $k_{\alpha}$, $k_{\alpha}\le k_{\alpha_0}+1$ holds for all $\alpha \in \overline{B_{\epsilon}}(\alpha_0)$ with $0<\epsilon< \min\{1,\frac{\alpha_0}{2}\}$. Then, if $k\ge k_{\alpha_0}+3$, we have $k\ge k_{\alpha}+2$, thus the semigroup $S_{\alpha}(s)$ is well-defined and satisfies Proposition \ref{prop-growth-bound}. Denote the operator 
	$$ \mathbf{K}_{\alpha}(\mathbf{f}, \mathbf{q})(s) :=  \mathbf{S}_{\alpha}(s)(\mathbf{f} -\mathbf{C}_{\alpha}(\mathbf{f},\mathbf{q}) ) + \int_0^{s} \mathbf{S}_{\alpha}(s-\tau) \mathbf{N}(\mathbf{q}(\tau)) d\tau,$$
	we claim that there exist $0<\epsilon_0< \min\{1,\frac{\alpha_0}{2}\}$  sufficiently small and $C_0>1$ such that for  all $\alpha \in \overline{B_{\epsilon_0}}(\alpha_0)$,   $\mathbf{f}\in \mathcal{H}^k(B)$ with $\|\mathbf{f}\|_{\mathcal{H}^k(B)}\le \frac{\epsilon_0}{C_0}$, $\mathbf{K}_{\alpha}(\mathbf{f}, \cdot) : \mathcal{X}_{\epsilon_0}^k(B)\to \mathcal{X}_{\epsilon_0}^k(B) $ is a contraction operator, which implies the existence and uniqueness of a solution $\mathbf{q}_{\alpha}$ to \eqref{eq:modified-system}  by the Banach fixed-point theorem.
	
	First, for any $\mathbf{f}\in \mathcal{H}^k(B)$ and $\mathbf{q}\in \mathcal{X}^k(B)$, by the definition of $ \mathbf{C}_{\alpha}(\mathbf{f},\mathbf{q})$ we derive
	\begin{align}\label{eq:K}
		\begin{aligned}
			\mathbf{K}_{\alpha}(\mathbf{f}, \mathbf{q})(s) &= \mathbf{S}_{\alpha}(s)\tilde{\mathbf{P}}_{\alpha}\mathbf{f} + \int_0^{s} \mathbf{S}_{\alpha}(s-\tau) \tilde{\mathbf{P}}_{\alpha}\mathbf{N}(\mathbf{q}(\tau)) d\tau -\mathbf{P}_{0,\alpha}\int_s^{\infty} \mathbf{N}(\mathbf{q}(\tau)) d\tau \\
			&\quad  -  \mathbf{L}_{\alpha}\mathbf{P}_{0,\alpha}\int_s^{\infty} (s-\tau)\mathbf{N}(\mathbf{q}(\tau)) d\tau - \mathbf{P}_{1,\alpha} \int_s^{\infty} e^{s-\tau} \mathbf{N}(\mathbf{q}(\tau)) d\tau.
		\end{aligned}
	\end{align}
	Then, by Proposition \ref{prop-growth-bound} and Lemma \ref{lem-nonlinearity}, we have
	\begin{align*}
		\|	\mathbf{K}_{\alpha}(\mathbf{f}, \mathbf{q}) \|_{ \mathcal{H}^k(B)}& \lesssim e^{-w_0s} \|\mathbf{f} \|_{ \mathcal{H}^k(B)} + \int_0^s e^{-w_0(s-\tau)} \|\mathbf{q}(\tau)\|^2_{ \mathcal{H}^k(B)}  d\tau + \int_{s}^{\infty}  \|\mathbf{q}(\tau)\|^2_{ \mathcal{H}^k(B)}  d\tau \\
		&\quad + \int_{s}^{\infty} (\tau-s)  \|\mathbf{q}(\tau)\|^2_{ \mathcal{H}^k(B)}  d\tau + \int_{s}^{\infty} e^{s-\tau}  \|\mathbf{q}(\tau)\|^2_{ \mathcal{H}^k(B)}  d\tau \\
		&\lesssim e^{-w_0s}\frac{\epsilon_0}{C_0} + {\epsilon_0}^2\int_0^s e^{-w_0(s-\tau)}  e^{-2w_0\tau}  d\tau + {\epsilon_0}^2 \int_{s}^{\infty}  e^{-2w_0\tau}  d\tau \\
		&\quad + {\epsilon_0}^2\int_{s}^{\infty} (\tau-s) e^{-2w_0\tau}  d\tau + {\epsilon_0}^2\int_{s}^{\infty} e^{s-\tau}  e^{-2w_0\tau}  d\tau \\
		& \lesssim e^{-w_0s} \left(\frac{\epsilon_0}{C_0} +\epsilon_0^2\right)\\
		& \le e^{-w_0s} \epsilon_0 
	\end{align*}
	holds for  all $s\ge 0$, $\alpha \in \overline{B_{\epsilon_0}}(\alpha_0)$, $\mathbf{f}\in \mathcal{H}^k(B)$ with $\|\mathbf{f}\|_{\mathcal{H}^k(B)}\le \frac{\epsilon_0}{C_0}$, and $\mathbf{q}\in \mathcal{X}_{\epsilon_0}^k(B)$ with $0<\epsilon_0< \min\{1,\frac{\alpha_0}{2}\}$ sufficiently small and some constant $C_0>1$.  Thus, $\mathbf{K}_{\alpha}(\mathbf{f}, \cdot)$ maps $ \mathcal{X}_{\epsilon_0}^k(B)$ to $ \mathcal{X}_{\epsilon_0}^k(B)$. 
	
	Next, we show that $\mathbf{K}_{\alpha}(\mathbf{f}, \cdot)$ is a contraction. Indeed, by \eqref{eq:K} and Lemma \ref{lem-nonlinearity} we have
	\begin{align*}
		&\| \mathbf{K}_{\alpha}(\mathbf{f}, \mathbf{q})- \mathbf{K}_{\alpha}(\mathbf{f}, \mathbf{q}') \|_{ \mathcal{H}^k(B)}\\
		& \lesssim \int_0^s e^{-w_0(s-\tau)} \| \mathbf{N}(\mathbf{q})(\tau)- \mathbf{N}(\mathbf{q}')(\tau)\|_{ \mathcal{H}^k(B)}  d\tau + \int_{s}^{\infty}   \| \mathbf{N}(\mathbf{q})(\tau)- \mathbf{N}(\mathbf{q}')(\tau)\|_{ \mathcal{H}^k(B)}  d\tau \\
		&\quad + \int_{s}^{\infty} (\tau-s) \| \mathbf{N}(\mathbf{q})(\tau)- \mathbf{N}(\mathbf{q}')(\tau)\|_{ \mathcal{H}^k(B)} d\tau + \int_{s}^{\infty} e^{s-\tau}  \| \mathbf{N}(\mathbf{q})(\tau)- \mathbf{N}(\mathbf{q}')(\tau)\|_{ \mathcal{H}^k(B)}  d\tau \\
		&\lesssim  {\epsilon_0}\int_0^s e^{-w_0(s-\tau)}  e^{-2w_0\tau}  d\tau  \| \mathbf{q}- \mathbf{q}'\|_{ \mathcal{X}^k(B)} + {\epsilon_0} \int_{s}^{\infty}  e^{-2w_0\tau}  d\tau  \| \mathbf{q}- \mathbf{q}'\|_{ \mathcal{X}^k(B)} \\
		&\quad + {\epsilon_0}\int_{s}^{\infty} (\tau-s) e^{-2w_0\tau}  d\tau  \| \mathbf{q}- \mathbf{q}'\|_{ \mathcal{X}^k(B)}  + {\epsilon_0}\int_{s}^{\infty} e^{s-\tau}  e^{-2w_0\tau}  d\tau  \| \mathbf{q}- \mathbf{q}'\|_{ \mathcal{X}^k(B)} \\
		& \lesssim \epsilon_0 e^{-w_0s}  \| \mathbf{q}- \mathbf{q}'\|_{ \mathcal{X}^k(B)}.
	\end{align*}
	for all $\mathbf{q}, \mathbf{q}' \in \mathcal{X}_{\epsilon_0}^k(B)$, $s \geq 0$, and $\alpha \in \overline{B_{\epsilon_0}}(\alpha_0)$. Upon possibly choosing $\epsilon_0>0$ smaller, we infer
	\begin{align*}
		\left\|\mathbf{K}_\alpha(\mathbf{f}, \mathbf{q})-\mathbf{K}_\alpha(\mathbf{f}, \mathbf{q}')\right\|_{\mathcal{X}^k\left(B\right)} \leq \frac{1}{2}\|\mathbf{q}-\mathbf{q}'\|_{\mathcal{X}^k\left(B\right)}
	\end{align*}
	for all $\mathbf{q}, \mathbf{q}' \in \mathcal{X}_{\epsilon_0}^k\left(B\right)$, $\mathbf{f}\in \mathcal{H}^k(B)$ with $\|\mathbf{f}\|_{\mathcal{H}^k(B)}\le \epsilon_0$, and $\alpha \in \overline{B_{\epsilon_0}}(\alpha_0)$. 
	
	Lastly, let $\mathbf{f}, \mathbf{f}' \in \mathcal{H}_{\frac{\epsilon_0}{C_0}}^k\left(B\right)$ and let $\mathbf{q}_\alpha, \mathbf{q}'_\alpha \in \mathcal{X}_{\epsilon_0}^k\left(B\right)$ be the unique fixed point of $\mathbf{K}_\alpha(\mathbf{f},\cdot) ,$ $\mathbf{K}_\alpha(\mathbf{f}',\cdot)$ respectively. Then
	\begin{align*}
		\begin{aligned}
			\mathbf{q}_\alpha(s)-\mathbf{q}'_\alpha(s) & =\mathbf{K}_\alpha\left(\mathbf{f}, \mathbf{q}_\alpha\right)(s)-\mathbf{K}_\alpha\left(\mathbf{f}', \mathbf{q}'_\alpha\right)(s) \\
			& =\mathbf{K}_\alpha\left(\mathbf{f}, \mathbf{q}_\alpha\right)(s) - \mathbf{K}_\alpha\left(\mathbf{f}, \mathbf{q}'_\alpha\right)(s) +\mathbf{S}_\alpha(s)\tilde{\mathbf{P}}_\alpha (\mathbf{f}-\mathbf{f}')
		\end{aligned}
	\end{align*}
	Applying previous estimates yields
	\begin{align*}
		\begin{aligned}
			\left\|\mathbf{K}_\alpha\left(\mathbf{f}, \mathbf{q}_\alpha\right)-\mathbf{K}_\alpha\left(\mathbf{f}, \mathbf{q}'_\alpha\right)\right\|_{\mathcal{X}^k\left(B\right)} & \leq \frac{1}{2}\left\|\mathbf{q}_\alpha-\mathbf{q}'_\alpha\right\|_{\mathcal{X}^k\left(B\right)}, \\
			\left\| \mathbf{S}_\alpha(s)\tilde{\mathbf{P}}_\alpha (\mathbf{f}-\mathbf{f}') \right\|_{\mathcal{H}^k\left(B\right)} & \lesssim \mathrm{e}^{-w_0 s}\|\mathbf{f}-\mathbf{f}'\|_{\mathcal{H}^k\left(B\right)},
		\end{aligned}
	\end{align*}
	for all $s \geq 0$ and $\alpha \in \overline{B_{\epsilon_0}}(\alpha_0)$.  This shows the Lipschitz continuous dependence on the initial data.
\end{proof}

\subsection{Stable flow near the blow-up solution}
The initial data for our abstract Cauchy problem are introduced as follows.

\begin{defn}
	Let $\alpha_0, \alpha>0$, $T_0, T>0, \kappa_0,\kappa\in \mathbb{R}$, and $k\ge k_{\alpha}+2$ satisfying $T<T_0\sqrt{1+\alpha_0}$. We define the operator
	$$ \mathbf{U}_{\alpha, \kappa, T}: \mathcal{H}^k(\mathbb{R}) \to \mathcal{H}^k(B), \quad \mathbf{f}\mapsto \mathbf{f}^{T} + \mathbf{f}^{T}_0 -\mathbf{f}_{\alpha,\kappa}, $$
	where
	$$\mathbf{f}(y) = \begin{pmatrix}
		f_1(y)\\ f_2(y)
	\end{pmatrix}$$
	and 
	$$\mathbf{f}^{T}(y) = \begin{pmatrix} f_1(Ty)\\ Tf_2(Ty) \end{pmatrix}, \quad \mathbf{f}^{T}_0(y) = \begin{pmatrix}
		\tilde{U}_{\alpha_0,\infty,\kappa_0}(\frac{T}{T_0}y)\\
		\frac{T}{T_0}\alpha_0 + \left(\frac{T}{T_0}\right)^2 y\partial\tilde{U}_{\alpha_0,\infty,\kappa_0}(\frac{T}{T_0}y)
	\end{pmatrix},  \quad \mathbf{f}_{\alpha,\kappa}(y) = \begin{pmatrix}
		\tilde{U}_{\alpha,\infty,\kappa}(y)\\
		\alpha +  y\partial_y\tilde{U}_{\alpha,\infty,\kappa}(y)
	\end{pmatrix} $$
\end{defn}

\begin{lem}\label{lem-initial} Let $\alpha_0, \alpha>0$, $T_0, T>0, \kappa_0,\kappa\in \mathbb{R}$, and $k\ge k_{\alpha}+2$ satisfying $T<T_0\sqrt{1+\alpha_0}$. Then,
	\begin{align*}
		\mathbf{U}_{\alpha, \kappa, T}(\mathbf{f}) = \mathbf{f}^T + \left[(\kappa_0-\kappa) -\alpha\left(\frac{T}{T_0}-1\right) \right]\mathbf{f}_{0,\alpha} + \left(\frac{T}{T_0}-1\right) \mathbf{f}_{1,\alpha} + (\alpha_0 -\alpha)\mathbf{g}_{0,\alpha} + \mathbf{r}\left(\alpha,\frac{T}{T_0}\right),
	\end{align*}
	for any $\mathbf{f}\in \mathcal{H}^k(\mathbb{R})$, where $\mathbf{f}_{0,\alpha}, \mathbf{f}_{1,\alpha}, \mathbf{g}_{0,\alpha} $ are introduced in Proposition \ref{prop-spectrum}, and 
	\begin{align*}
		\left\|\mathbf{r}\left(\alpha,\frac{T}{T_0}\right)\right\|_{\mathcal{H}^k(B)} \lesssim |\alpha -\alpha_0|^2 + \left|\frac{T}{T_0}-1\right|^{2} 
	\end{align*} 
	for all $\alpha \in	\overline{B_{\epsilon_1}}(\alpha_0)$ and $T\in \overline{B_{T_0 \epsilon_1}}(T_0)$, where $\epsilon_1 :=\min\{1,\frac{\alpha_0}{2}\}$.
\end{lem}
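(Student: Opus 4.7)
The plan is to reduce everything to a single parameter-to-profile map and apply Taylor's theorem in $\mathcal{H}^k(B)$. Observe first that $\mathbf{U}_{\alpha,\kappa,T}(\mathbf{f}) - \mathbf{f}^T = \mathbf{f}^T_0 - \mathbf{f}_{\alpha,\kappa}$ is independent of $\mathbf{f}$, so the claim is purely about the difference of two evaluations of the same map. Introduce
\begin{equation*}
\mathbf{H}(\alpha,\tau,\kappa)(y) := \begin{pmatrix} -\alpha\log(\sqrt{1+\alpha}+\tau y)+\kappa \\[2pt] \tau\alpha - \dfrac{\tau^2\alpha y}{\sqrt{1+\alpha}+\tau y}\end{pmatrix},
\end{equation*}
so that $\mathbf{f}^T_0 = \mathbf{H}(\alpha_0,T/T_0,\kappa_0)$ and $\mathbf{f}_{\alpha,\kappa} = \mathbf{H}(\alpha,1,\kappa)$. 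With $\epsilon_1 = \min\{1,\alpha_0/2\}$ and the standing assumption $T < T_0\sqrt{1+\alpha_0}$, the denominator $\sqrt{1+\alpha}+\tau y$ stays uniformly bounded away from zero on $[-1,1]$, so $\mathbf{H}$ is a smooth $\mathcal{H}^k(B)$-valued function of $(\alpha,\tau,\kappa)$ on a neighborhood of $(\alpha_0,1,\kappa_0)$. Since $\mathbf{f}^T_0$ only depends on $(\tau,\kappa_0)$ while $\mathbf{f}_{\alpha,\kappa}$ only depends on $(\alpha,\kappa)$, no cross derivatives in $\alpha$ and $\tau$ appear, and since $\mathbf{H}$ is affine in $\kappa$, the $\kappa$-expansion is exact. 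Taylor's theorem with integral remainder therefore yields
\begin{equation*}
\mathbf{f}^T_0-\mathbf{f}_{\alpha,\kappa} = \tfrac{T-T_0}{T_0}\,\partial_\tau\mathbf{H}_0 + (\alpha_0-\alpha)\,\partial_\alpha\mathbf{H}_0 + (\kappa_0-\kappa)\,\partial_\kappa\mathbf{H}_0 + \tilde{\mathbf{r}},
\end{equation*}
where $\mathbf{H}_0:=\mathbf{H}(\alpha_0,1,\kappa_0)$ and $\|\tilde{\mathbf{r}}\|_{\mathcal{H}^k(B)}\lesssim |\alpha-\alpha_0|^2+|T/T_0-1|^2$.

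The heart of the argument is identifying these three derivatives with the (generalized) eigenfunctions of Proposition \ref{prop-spectrum}. A direct computation gives $\partial_\kappa\mathbf{H}_0 = (1,0)^\top = \mathbf{f}_{0,\alpha_0}$. Differentiating in $\alpha$ and using $\frac{d}{d\alpha}\sqrt{1+\alpha} = \frac{1}{2\sqrt{1+\alpha}}$, the first component of $\partial_\alpha\mathbf{H}_0$ becomes $-\log(\sqrt{1+\alpha_0}+y) - \frac{\alpha_0}{2\sqrt{1+\alpha_0}(\sqrt{1+\alpha_0}+y)}$, which matches the first component of $\mathbf{g}_{0,\alpha_0}$ verbatim; clearing denominators in the second component shows $\partial_\alpha\mathbf{H}_0 = \mathbf{g}_{0,\alpha_0}$. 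For the $\tau$-derivative, setting $a_0=\sqrt{1+\alpha_0}$ one obtains first component $y\partial_y\tilde{U}_{\alpha_0,\infty,\kappa_0} = -\alpha_0 y/(a_0+y) = -\alpha_0 + \alpha_0 a_0/(a_0+y)$ and second component $\alpha_0 + 2y\partial_y\tilde{U} + y^2\partial_y^2\tilde{U} = \alpha_0[(a_0+y)-y]^2/(a_0+y)^2 = \alpha_0(1+\alpha_0)/(a_0+y)^2$, which together identify
\begin{equation*}
\partial_\tau\mathbf{H}_0 = \mathbf{f}_{1,\alpha_0} - \alpha_0\,\mathbf{f}_{0,\alpha_0}.
\end{equation*}

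Substituting these three identifications rewrites the expansion in the basis $\{\mathbf{f}_{0,\alpha_0},\mathbf{f}_{1,\alpha_0},\mathbf{g}_{0,\alpha_0}\}$ with the exact coefficients $(\kappa_0-\kappa)-\alpha_0(T/T_0-1)$, $T/T_0-1$ and $\alpha_0-\alpha$ respectively. To arrive at the stated formula it remains to swap $\alpha_0\leftrightarrow\alpha$ in both the basis vectors and in the coefficient $-\alpha_0(T/T_0-1)$. The $\mathbf{f}_{0,\alpha}=(1,0)^\top$ swap is free since it is $\alpha$-independent; smoothness of $\alpha\mapsto\mathbf{f}_{1,\alpha}$ and $\alpha\mapsto\mathbf{g}_{0,\alpha}$ in $\mathcal{H}^k(B)$ (explicit formulas in Proposition \ref{prop-spectrum}, all denominators bounded away from zero for $\alpha$ near $\alpha_0$) produces errors of size $|\alpha-\alpha_0|\,|T/T_0-1|$ and $|\alpha-\alpha_0|^2$, and the coefficient swap $-\alpha_0(T/T_0-1) \to -\alpha(T/T_0-1)$ also costs $|\alpha-\alpha_0|\,|T/T_0-1|$. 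All of these are absorbed into $\mathbf{r}$ via Young's inequality, yielding the claimed quadratic estimate. No step presents a serious obstacle; the only place that calls for care is the algebraic verification that $\partial_\alpha\mathbf{H}_0$ equals $\mathbf{g}_{0,\alpha_0}$ exactly, which is why the explicit second-component formula in Proposition \ref{prop-spectrum} is crucial.
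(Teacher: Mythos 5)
Your proposal is correct and takes essentially the same route as the paper: Taylor-expand the parameter-to-profile map $(\alpha,\tau,\kappa)\mapsto\mathbf{H}(\alpha,\tau,\kappa)$ (exactly in $\kappa$ by affineness), identify the first-order derivatives with the symmetry modes $\mathbf{f}_{0,\alpha}$, $\mathbf{f}_{1,\alpha}-\alpha\mathbf{f}_{0,\alpha}$, $\mathbf{g}_{0,\alpha}$, and bound the quadratic remainder uniformly using $T<T_0\sqrt{1+\alpha_0}$. The only cosmetic difference is that you evaluate all derivatives at the base point $(\alpha_0,1,\kappa_0)$ and then swap $\alpha_0\to\alpha$ in the modes and in the coefficient $-\alpha_0(T/T_0-1)$, absorbing the $O\left(|\alpha-\alpha_0|\,|T/T_0-1|\right)$ cost into $\mathbf{r}$, whereas the paper expands the $\alpha$-dependence directly at $\alpha$; both yield the stated estimate.
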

\begin{proof}
	For fixed $y \in [-1,1]$ and  $\epsilon_1>0$ defined above, Taylor's theorem applied to the components of the map	    
	\begin{align*}
		\overline{B_{\epsilon_1}}(\alpha_0) \times \mathbb{R}\times \overline{B_{T_0 \epsilon_1}}(T_0)  \rightarrow \mathbb{R}^2, \quad(\alpha, \kappa, T) \mapsto \mathbf{f}_0^T(y)-\mathbf{f}_{\alpha,\kappa}(y),
	\end{align*}	    
	yields   
	\begin{align*}
		\mathbf{f}_0^T(y)-\mathbf{f}_{\alpha,\kappa}(y) =\left[(\kappa_0-\kappa) -\alpha\left(\frac{T}{T_0}-1\right) \right]\mathbf{f}_{0,\alpha} + \left(\frac{T}{T_0}-1\right) \mathbf{f}_{1,\alpha} + (\alpha_0 -\alpha)\mathbf{g}_{0,\alpha} + {\mathbf{r}}\left(\alpha,\frac{T}{T_0}\right),
	\end{align*}	    
	with remainder	    
	\begin{align*}
		\begin{aligned}
			\left[{\mathbf{r}}\left(\alpha, \frac{T}{T_0}\right)(y)\right]_{\ell}= & \left(\frac{T}{T_0}-1\right)^2 \int_0^1 T_0^2 \left. \partial_{T^{\prime}}^2\left[\mathbf{f}_0^{T^{\prime}}(y)\right]_{\ell}\right|_{T^{\prime}=T_0+z(T-T_0)}(1-z) \mathrm{d} z \\
			& -\left.  (\alpha_0 -\alpha)^2\int_0^1 \partial_{\alpha'}^2 \left[\mathbf{f}_{\alpha^{\prime},\kappa}(y)\right]_{\ell}\right|_{\alpha^{\prime}=\alpha+z(\alpha_0-\alpha) }(1-z) \mathrm{d} z, \quad \ell=1,2 .
		\end{aligned}
	\end{align*}
	Note that $	\partial^2 \mathbf{f}_0^T(y)$ and $\partial^2 \mathbf{f}_{\alpha,\kappa}(y)$	are smooth and uniformly bounded in $[-1,1]$ for all $\alpha \in	\overline{B_{\epsilon_1}}(\alpha)$, $\kappa \in \mathbb{R}$, and $T\in \overline{B_{T_0 \epsilon_1}}(T_0)$. this concludes the proof.   
	
\end{proof}

\begin{prop}\label{prop-main}
	Let $\alpha_0,T_0>0,$  $\kappa_0\in \mathbb{R}$,  $k\ge k_{\alpha_0}+3, 0< w_0<1$. There are constants $\epsilon_2>0$ sufficiently small and $C_2>0$ such that for all real-valued $\mathbf{f}\in \mathcal{H}^k$ with $\|\mathbf{f}\|_{\mathcal{H}^k}\le \frac{\epsilon_2}{C_2^2}$, there exist parameters $\alpha^*, T^*>0, \kappa^*\in\mathbb{R},$ and a unique $\mathbf{q}_{\alpha^*,\kappa^*,T^*}\in C([0,\infty), \mathcal{H}^k(B))$ such that $\|\mathbf{q}_{\alpha^*,\kappa^*,T^*}(s) \|_{\mathcal{H}^k(B)}\le \epsilon_2 e^{-w_0 s} $ and 
	\begin{align}\label{eq:nlw-mild}
		\mathbf{q}_{\alpha^*,\kappa^*,T^*}(s) = \mathbf{S}_{\alpha^*}(s)\mathbf{U}_{\alpha^*,\kappa^*,T^*}(\mathbf{f}) + \int_0^{s} \mathbf{S}_{\alpha^*}(s-\tau) \mathbf{N}(\mathbf{q}_{\alpha^*,\kappa^*,T^*}(\tau)) d\tau
	\end{align}
	with
	\begin{align*}
		|\alpha^*-\alpha_0| + |\kappa^* -\kappa_0| + \left|\frac{T^*}{T_0}-1\right| \le \frac{\epsilon_2}{C_2}.
	\end{align*}
\end{prop}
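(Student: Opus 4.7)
The strategy is a standard Lyapunov-Perron modulation argument built on top of Proposition~\ref{prop-modified-existence}. For each triple $(\alpha,\kappa,T)$ in a small neighborhood of $(\alpha_0,\kappa_0,T_0)$, Proposition~\ref{prop-modified-existence} produces a unique modified solution $\mathbf{q}_{\alpha,\kappa,T}\in\mathcal{X}^k_{\epsilon_0}(B)$ with decay $\|\mathbf{q}_{\alpha,\kappa,T}(s)\|_{\mathcal{H}^k(B)}\lesssim\epsilon_0 e^{-w_0 s}$. This $\mathbf{q}_{\alpha,\kappa,T}$ solves the genuine Cauchy problem \eqref{eq:nlw-mild} precisely when the correction term vanishes:
\begin{equation*}
\mathbf{C}_{\alpha}\bigl(\mathbf{U}_{\alpha,\kappa,T}(\mathbf{f}),\,\mathbf{q}_{\alpha,\kappa,T}\bigr)=\mathbf{0}.
\end{equation*}
Since $\mathbf{C}_\alpha$ takes values in the three-dimensional unstable subspace $\mathrm{rg}\,\mathbf{P}_\alpha=\langle\mathbf{f}_{0,\alpha},\mathbf{g}_{0,\alpha},\mathbf{f}_{1,\alpha}\rangle$, this amounts to three scalar equations in the three scalar unknowns $(\kappa,T,\alpha)$, perfectly matched.

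To solve this system, I would unwind $\mathbf{C}_\alpha$ using its definition together with Lemma~\ref{lem-initial}. The contribution of $\mathbf{P}_\alpha\mathbf{U}_{\alpha,\kappa,T}(\mathbf{f})$ splits as
\begin{equation*}
\mathbf{P}_\alpha\mathbf{f}^T + \bigl[(\kappa_0-\kappa)-\alpha(\tfrac{T}{T_0}-1)\bigr]\mathbf{f}_{0,\alpha} + \bigl(\tfrac{T}{T_0}-1\bigr)\mathbf{f}_{1,\alpha} + (\alpha_0-\alpha)\mathbf{g}_{0,\alpha} + \mathbf{P}_\alpha\mathbf{r}(\alpha,\tfrac{T}{T_0}),
\end{equation*}
while the three weighted integrals of $\mathbf{N}(\mathbf{q}_{\alpha,\kappa,T})$ that make up the rest of $\mathbf{C}_\alpha$ converge and satisfy a quadratic bound $\lesssim \|\mathbf{q}_{\alpha,\kappa,T}\|_{\mathcal{X}^k(B)}^2 \lesssim \epsilon_0^2$ by Lemma~\ref{lem-nonlinearity} and the exponential decay of $\mathbf{q}_{\alpha,\kappa,T}$. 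Reading off the coefficients in the $\{\mathbf{f}_{0,\alpha},\mathbf{f}_{1,\alpha},\mathbf{g}_{0,\alpha}\}$ basis, the linearization of the system $\mathbf{C}_\alpha=\mathbf{0}$ in $(\kappa,T,\alpha)$ at $(\kappa_0,T_0,\alpha_0)$ is triangular with nonvanishing determinant $1/T_0$, so this leading linear part is invertible.

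Inverting the leading-order linear map, I would then recast $\mathbf{C}_\alpha=\mathbf{0}$ as a fixed-point equation for $(\kappa,T,\alpha)$ in the closed ball of radius $\epsilon_2/C_2$ around $(\kappa_0,T_0,\alpha_0)$. Under the hypothesis $\|\mathbf{f}\|_{\mathcal{H}^k}\le\epsilon_2/C_2^2$, the driving term $\mathbf{P}_\alpha\mathbf{f}^T$ has size $\lesssim\epsilon_2/C_2^2$, the Taylor remainder $\mathbf{P}_\alpha\mathbf{r}(\alpha,T/T_0)$ has quadratic size $\lesssim(\epsilon_2/C_2)^2$, and the nonlinear integrals are bounded by $\lesssim\epsilon_2^2$. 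Choosing $C_2$ large enough and $\epsilon_2$ small enough, the fixed-point map sends the ball into itself continuously, and Brouwer's theorem yields $(\alpha^*,\kappa^*,T^*)$ with the desired bound $|\alpha^*-\alpha_0|+|\kappa^*-\kappa_0|+|T^*/T_0-1|\le\epsilon_2/C_2$. Uniqueness of the associated $\mathbf{q}_{\alpha^*,\kappa^*,T^*}$ in the class $\|\mathbf{q}\|_{\mathcal{X}^k(B)}\le\epsilon_2$ follows from the uniqueness clause of Proposition~\ref{prop-modified-existence}.

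The main technical obstacle is ensuring enough regularity of the parameter-to-solution map $(\alpha,\kappa,T)\mapsto\mathbf{q}_{\alpha,\kappa,T}$ for the fixed-point argument to close. Proposition~\ref{prop-modified-existence} provides Lipschitz dependence only with respect to initial data at fixed $\alpha$, so the argument must be extended to cover simultaneous variation in $\alpha$. This requires Lipschitz dependence of the semigroup $\mathbf{S}_\alpha$ and of the Riesz projections $\mathbf{P}_{0,\alpha},\mathbf{P}_{1,\alpha}$ on $\alpha$, which follow from the uniform resolvent bound of Proposition~\ref{prop:resolvent-esti}, the Cauchy-integral representation of the projections, and the standard perturbation theory for closed operators in \cite{kato1995}. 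Smoothness of the eigenfunctions $\mathbf{f}_{0,\alpha},\mathbf{f}_{1,\alpha},\mathbf{g}_{0,\alpha}$ in $\alpha$ is immediate from their explicit formulas in Proposition~\ref{prop-spectrum}.
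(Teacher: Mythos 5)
Your proposal is correct and follows essentially the same route as the paper: the paper likewise reduces the problem to the vanishing of $\mathbf{C}_\alpha$ tested against the dual basis of $\langle\mathbf{f}_{0,\alpha},\mathbf{f}_{1,\alpha},\mathbf{g}_{0,\alpha}\rangle$, and its map $F=(\alpha_0+F_1,\,\kappa_0+F_2+\alpha F_3,\,T_0-T_0F_3)$ is exactly the inversion of the triangular linear part you identify, solved by Brouwer's fixed-point theorem with the same size estimates from Lemmas \ref{lem-initial} and \ref{lem-nonlinearity}. Your closing remark on continuity of $(\alpha,\kappa,T)\mapsto\mathbf{q}_{\alpha,\kappa,T}$ (and of $\mathbf{S}_\alpha$, $\mathbf{P}_{0,\alpha}$, $\mathbf{P}_{1,\alpha}$ in $\alpha$) addresses a step the paper asserts without comment when calling $F$ continuous, so it is a refinement rather than a departure.
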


\begin{proof}
	Fix $\alpha_0>0, k\ge k_{\alpha_0}+3, 0< w_0<1$,  we take $\epsilon_0>0, C_0>1$ from Proposition \ref{prop-modified-existence} and let $0<\epsilon^{\prime} \leq \min\{\epsilon_0,\epsilon_1\}$ and $C^{\prime} \geq C_0$. Let  
	\begin{align*}
		\epsilon_2 = \frac{\epsilon^{\prime}}{M} \quad \text { and } \quad C_2 = M C^{\prime}
	\end{align*}    
	for $M \geq 1$. Let $\mathbf{f} \in \mathcal{H}^k(\mathbb{R})$ with $\|\mathbf{f}\|_{\mathcal{H}^k(\mathbb{R})} \leq \frac{\epsilon_2}{C_2^2}$. If $M \geq 1$ is large enough, we get from Lemma \ref{lem-initial}
	\begin{align*}
		\|	\mathbf{U}_{\alpha, \kappa, T}(\mathbf{f}) \|_{\mathcal{ H }^k(B)} =& \|\mathbf{f}^T\|_{\mathcal{ H }^k(B)} + \left(|\kappa_0-\kappa| + \alpha\left|\frac{T}{T_0}-1\right| \right) \|\mathbf{f}_{0,\alpha}\|_{\mathcal{ H }^k(B)} \\
		&  + \left|\frac{T}{T_0}-1\right| \|\mathbf{f}_{1,\alpha}\|_{\mathcal{ H }^k(B)} + |\alpha_0 -\alpha| \|\mathbf{g}_{0,\alpha}\|_{\mathcal{ H }^k(B)} +\left\|\mathbf{r}\left(\alpha,\frac{T}{T_0}\right)\right\|_{\mathcal{ H }^k(B)}\\
		\le & \frac{\epsilon_2}{C'} 
	\end{align*}
	for all $\alpha \in \overline{B_{\frac{\epsilon_2}{C_2}}}(\alpha_0)$, $\frac{T}{T_0} \in \overline{\mathbb{B}_{\frac{\epsilon_2}{C_2}}^1(1)}$, and $\kappa \in \overline{B_{\frac{\epsilon_2}{C_2}}}(\kappa_0)$. Hence, we can fix $\epsilon^{\prime}>0$ and $C^{\prime}>1$ for now so that $\mathbf{U}_{\alpha, \kappa, T}(\mathbf{f}) \in \mathcal{H}^k\left(B\right)$ satisfies the assumptions for the initial data in Proposition \ref{prop-modified-existence} and we conclude the existence of a unique solution $\mathbf{q}_{\alpha, \kappa, T} \in \mathcal{X}^k\left(B\right)$ with $\left\|\mathbf{q}_{\alpha, \kappa, T} \right\|_{\mathcal{X}^k\left(B\right)} \leq \epsilon_2$ to
	\begin{align*}
		\mathbf{q}_{\alpha, \kappa, T}(s)=\mathbf{S}_\alpha(s)\left(\mathbf{U}_{\alpha, \kappa, T}(\mathbf{f})-\mathbf{C}_\alpha\left(\mathbf{U}_{\alpha, \kappa, T}(\mathbf{f}), \mathbf{q}_{\alpha, \kappa, T}\right)\right)+\int_0^s \mathbf{S}_\alpha (s-\tau) \mathbf{N}_\alpha \left(\mathbf{q}_{\alpha, \kappa, T}(\tau)\right) \mathrm{d} \tau,
	\end{align*}    
	for each$\alpha \in \overline{B_{\frac{\epsilon_2}{C_2}}}(\alpha_0)$, $\frac{T}{T_0} \in \overline{\mathbb{B}_{\frac{\epsilon_2}{C_2}}^1(1)}$, and $\kappa \in \overline{B_{\frac{\epsilon_2}{C_2}}}(\kappa_0)$. Now, the task is to determine parameters such that the correction term $\mathbf{C}_\alpha\left(\mathbf{U}_{\alpha, \kappa, T}(\mathbf{f}), \mathbf{q}_{\alpha, \kappa, T}\right) \in \operatorname{rg}\left(\mathbf{P}_\alpha\right)$ is equal to $\mathbf{0}$. 
	
	By Proposition \ref{prop-spectrum} we have that $\operatorname{rg}\left(\mathbf{P}_\alpha\right) \subset \mathcal{H}^k(B)$ is a finite-dimensional sub-Hilbert space spanned by the linearly independent set $\left\{\mathbf{f}_{0, \alpha}, \mathbf{f}_{1, \alpha}, \mathbf{g}_{0, \alpha}\right\}$ of symmetry modes. Therefore, it is sufficient to prove that there are parameters for which the linear functional 
	\begin{align*}
		\ell_{\alpha,\kappa, T}: \operatorname{rg}\left(\mathbf{P}_\alpha\right) \rightarrow \mathbb{R}, \quad \mathbf{g} \mapsto\left( \mathbf{C}_\alpha\left(\mathbf{U}_{\alpha, \kappa, T}(\mathbf{f}), \mathbf{q}_{\alpha, \kappa, T}\right)  \mid \mathbf{g} \right)_{\mathcal{\mathcal { H }}^k(B)}
	\end{align*}    
	is identically zero on a basis of $\operatorname{rg}\left(\mathbf{P}_\alpha\right)$. Note with Lemma \ref{lem-initial} that
	\begin{align*}
		\begin{aligned}
			\ell_{\alpha,\kappa, T}(\mathbf{g})= & \left( \mathbf{P}_{\alpha} \mathbf{f}^T \mid \mathbf{g}\right)_{\mathcal{H}^k(B)} +   
			\left[(\kappa_0-\kappa) -\alpha\left(\frac{T}{T_0}-1\right) \right]  \left(\mathbf{f}_{0,\alpha}  \mid \mathbf{g}\right)_{\mathcal{H}^k(B)} +  \left(\frac{T}{T_0}-1\right) \left(\mathbf{f}_{1,\alpha}  \mid \mathbf{g}\right)_{\mathcal{H}^k(B)}\\
			& + (\alpha_0 -\alpha)\left(\mathbf{g}_{0,\alpha}  \mid \mathbf{g}\right)_{\mathcal{H}^k(B)} +   \left(\mathbf{P}_{\alpha}\mathbf{r}\left(\alpha,\frac{T}{T_0}\right) \mid \mathbf{g}\right)_{\mathcal{H}^k(B)}\\
			& + \left(  \mathbf{P}_{0,\alpha}\int_0^{\infty} \mathbf{N}(\mathbf{q}_{\alpha, \kappa, T}(\tau)) d\tau \mid \mathbf{g}\right)_{\mathcal{H}^k(B)} + \left(\mathbf{L}_{\alpha}\mathbf{P}_{0,\alpha}\int_0^{\infty} (-\tau)\mathbf{N}(\mathbf{q}_{\alpha, \kappa, T}(\tau)) d\tau \mid \mathbf{g}\right)_{\mathcal{H}^k(B)}\\
			& +  \left(  \mathbf{P}_{1,\alpha} \int_0^{\infty} e^{-\tau} \mathbf{N}(\mathbf{q}_{\alpha, \kappa, T}(\tau)) d\tau \mid \mathbf{g}\right)_{\mathcal{H}^k(B)}  		
		\end{aligned}
	\end{align*}
	To achieve vanishing of $\ell_{\alpha,\kappa, T}$, choose the dual basis $\left\{\mathbf{g}_\alpha^1, \mathbf{g}_\alpha^2, \mathbf{g}_\alpha^{3}\right\}$ for $\left\{ \mathbf{g}_{0, \alpha},\mathbf{f}_{0, \alpha}, \mathbf{f}_{1, \alpha}\right\}$, which is obtained by letting $\Gamma(\alpha)^{m n}$ for $m, n=1, 2,3$ be the components of the inverse matrix of the real-valued Gram matrix 
	$$\Gamma(\alpha) =  \begin{bmatrix}
		\left(\mathbf{g}_{0, \alpha} \mid\mathbf{g}_{0, \alpha} \right)_{\mathcal{H}^k(B)} & 	\left(\mathbf{g}_{0, \alpha} \mid\mathbf{f}_{0, \alpha} \right)_{\mathcal{H}^k(B)} & 
		\left(\mathbf{g}_{0, \alpha} \mid\mathbf{f}_{1, \alpha} \right)_{\mathcal{H}^k(B)}  \\
		\left(\mathbf{f}_{0, \alpha} \mid\mathbf{g}_{0, \alpha} \right)_{\mathcal{H}^k(B)} & 	\left(\mathbf{f}_{0, \alpha} \mid\mathbf{f}_{0, \alpha} \right)_{\mathcal{H}^k(B)} &
		\left(\mathbf{f}_{0, \alpha} \mid\mathbf{f}_{1, \alpha} \right)_{\mathcal{H}^k(B)} \\
		\left(\mathbf{f}_{1, \alpha} \mid\mathbf{g}_{0, \alpha} \right)_{\mathcal{H}^k(B)}&
		\left(\mathbf{f}_{1, \alpha} \mid\mathbf{f}_{0, \alpha} \right)_{\mathcal{H}^k(B)} &
		\left(\mathbf{f}_{1, \alpha} \mid\mathbf{f}_{1, \alpha} \right)_{\mathcal{H}^k(B)}    	 	
	\end{bmatrix},$$
	and putting 
	\begin{align*}
		\mathbf{g}_\alpha^n:=\Gamma(\alpha)^{n1} \mathbf{g}_{0, \alpha}+\Gamma(\alpha)^{n2} \mathbf{f}_{0, \beta}+\Gamma(\alpha)^{n3} \mathbf{f}_{1, \alpha}, \quad n=1, 2,3.
	\end{align*}
	Then,   	
	\begin{align*}
		\left(\mathbf{g}_{0, \alpha} \mid \mathbf{g}_\alpha^j\right)_{\mathcal{H}^k(B)} =\delta_1^j, \quad	\left(\mathbf{f}_{0, \alpha} \mid \mathbf{g}_\alpha^j\right)_{\mathcal{H}^k(B)} =\delta_2^j, \quad 
		\left(\mathbf{f}_{1, \alpha} \mid \mathbf{g}_\alpha^j\right)_{\mathcal{H}^k(B)} =\delta_3^j
	\end{align*}    	
	for $ j=1, 2, 3$ and the components of each element in $\left\{\mathbf{g}_\alpha^1, \mathbf{g}_\alpha^2, \mathbf{g}_\alpha^{3}\right\} \subset \operatorname{rg}\left(\mathbf{P}_\alpha\right)$ are smooth functions which depend smoothly on the parameter $\alpha$ by Cramer's rule. Next, define the continuous map $F=\left(\alpha_0+F_1, \kappa_0+F_2 + \alpha F_3, T_0-T_0F_{3}\right): \overline{B_{\frac{\epsilon_2}{C_2}}}(\alpha_0) \times \overline{B_{\frac{\epsilon_2}{C_2}}}(\kappa_0) \times \overline{B_{\frac{T_0 \epsilon_2}{C_2}}}(T_0) \to \mathbb{R}^3$ by
	\begin{align*}
		\begin{aligned}
			F_n(\alpha,\kappa, T)= & \left( \mathbf{P}_{\alpha} \mathbf{f}^T \mid 	\mathbf{g}_\alpha^n\right)_{\mathcal{H}^k(B)}  + \left(\mathbf{P}_{\alpha}\mathbf{r}\left(\alpha,\frac{T}{T_0}\right) \mid 	\mathbf{g}_\alpha^n\right)_{\mathcal{H}^k(B)}\\
			& + \left(  \mathbf{P}_{0,\alpha}\int_0^{\infty} \mathbf{N}(\mathbf{q}_{\alpha, \kappa, T}(\tau)) d\tau \mid \mathbf{g}_\alpha^n\right)_{\mathcal{H}^k(B)} + \left(\mathbf{L}_{\alpha}\mathbf{P}_{0,\alpha}\int_0^{\infty} (-\tau)\mathbf{N}(\mathbf{q}_{\alpha, \kappa, T}(\tau)) d\tau \mid \mathbf{g}_\alpha^n\right)_{\mathcal{H}^k(B)}\\
			& +  \left(  \mathbf{P}_{1,\alpha} \int_0^{\infty} e^{-\tau} \mathbf{N}(\mathbf{q}_{\alpha, \kappa, T}(\tau)) d\tau \mid \mathbf{g}_\alpha^n\right)_{\mathcal{H}^k(B)},  		 \quad n=1, 2,3.
		\end{aligned}
	\end{align*}
	We use Cauchy-Schwarz and Lemmas \ref{lem-nonlinearity} and \ref{lem-initial} to get the estimate
	\begin{align*}
		\begin{aligned}
			\left|F_n(\alpha, \kappa, T)\right| & \lesssim\left\|\mathbf{f}^T\right\|_{\mathcal{H}^k(B)}+\left\|\mathbf{r}\left(\alpha,\frac{T}{T_0}\right) \right\|_{\mathcal{H}^k(B)} + \left\|\mathbf{q}_{\alpha, \kappa, T}\right\|_{\mathcal{X}^k(B)}^2 \\
			& \lesssim \frac{\epsilon_2}{C_2^2}+\frac{\epsilon_2^2}{C_2^2}+\epsilon_2^2 \\
			& \lesssim \frac{\epsilon_2}{C_2} \frac{1}{M C^{\prime}}+\frac{\epsilon_2}{C_2} \frac{\epsilon'}{M^2 C^{\prime}}+\frac{\epsilon_2}{C_2} C^{\prime} \epsilon^{\prime}
		\end{aligned}
	\end{align*}
	for all $\alpha \in \overline{B_{\frac{\epsilon_2}{C_2}}}(\alpha_0)$, $T \in \overline{\mathbb{B}_{\frac{T_0\epsilon_2}{C_2}}(T_0)}$, $\kappa \in \overline{B_{\frac{\epsilon_2}{C_2}}}(\kappa_0)$, and $n=1, 2, 3$. Now, upon choosing $\epsilon^{\prime}>0$ smaller and $C'>1, M\ge 1$ larger, we can fix the above values of $\epsilon_2, C_2$ so that $F$ becomes a continuous self-map on $\overline{B_{\frac{\epsilon_2}{C_2}}}(\alpha_0) \times \overline{B_{\frac{\epsilon_2}{C_2}}}(\kappa_0) \times \overline{B_{\frac{T_0 \epsilon_2}{C_2}}}(T_0) $. 
	According to Brouwer's fixed-point theorem, $F$ has a fixed point $\left(\alpha^*, \kappa^*, T^*\right) \in \overline{B_{\frac{\epsilon_2}{C_2}}}(\alpha_0) \times \overline{B_{\frac{\epsilon_2}{C_2}}}(\kappa_0) \times \overline{B_{\frac{T_0 \epsilon_2}{C_2}}}(T_0)$ which, by construction, satisfies   	
	\begin{align*}
		\alpha^{*}&=\alpha_0 + F_1 (\alpha^*,\kappa^*, T^*)=\alpha^{*} + \ell_{\alpha^*, \kappa^* T^*}\left(\mathbf{g}_{\alpha^*}^1\right),\\
		\kappa^*&=\kappa_0 +F_{2} (\alpha^*,\kappa^*, T^*) + \alpha^* F_{3} (\alpha^*,\kappa^*, T^*)=\kappa^*+ \ell_{\alpha^*, \kappa^* T^*}\left(\mathbf{g}_{\alpha^*}^2\right) + \alpha^*\ell_{\alpha^*, \kappa^* T^*}\left(\mathbf{g}_{\alpha^*}^3\right),\\
		\frac{T^*}{T_0}&=1-F_{3} (\alpha^*,\kappa^*, T^*)=\frac{T^*}{T_0}- \ell_{\alpha^*, \kappa^* T^*}\left(\mathbf{g}_{\alpha^*}^3\right),
	\end{align*}
	Thus $\ell_{\alpha^*,\kappa^*, T^*} \equiv 0$, as desired.
\end{proof}

\subsection{Proof of stability of the generalized self-similar blow-up}
\begin{proof}[Proof of Theorem \ref{thm:main}]
	Fix $0<\delta<1$ and let $w_0 = 1-\delta$. From Proposition \ref{prop-main}, we pick $0<\epsilon_2<1$ and $C_2 \geq 1$. Then, for any real-valued $(f, g) \in H^{k+1}(\mathbb{R})\times H^{k}(\mathbb{R}) $ with $\|(f, g)\|_{H^{k+1}(\mathbb{R})\times H^{k}(\mathbb{R})} \leq \frac{\epsilon_2}{C_2^2}$ there exists a unique mild solution $\mathbf{q}_{\alpha^*,\kappa^*,T^*} = \left(q_{\alpha^*,\kappa^*,T^*, 1}, q_{\alpha^*,\kappa^*,T^*, 2}\right) \in C([0,\infty), \mathcal{H}^k(B))$ to \eqref{eq:nlw-mild} that is also a  classical solution to the abstract Cauchy problem \eqref{eq:q-nonlinear}. Let
	\begin{align*}
		u(t, x):= U_{\alpha^*,\kappa^*,T^*}\left(-\log \frac{T^*-t}{T^*}, \frac{x-x_0}{T^*-t}\right) + q_{\alpha^*,\kappa^*,T^*, 1}\left(-\log \frac{T^*-t}{T^*}, \frac{x-x_0}{T^*-t}\right)
	\end{align*}
	then $u(t,x)$ gives the unique solution to the Cauchy problem \eqref{eq:nlw} in the backward lightcone $\Gamma(T^*, x_0)$ with initial data
	$$ u(0,x) = u_{\alpha_0,\infty,\kappa_0,T_0,x_0}(0,x) + f(x), \quad \partial_t u(0,x) = \partial_t u_{\alpha_0,\infty,\kappa_0,T_0,x_0}(0,x) + g(x), \quad x\in B_{T^*}(x_0).$$
	Moreover, by scaling of the homogeneous seminorms, we infer from Proposition \ref{prop-main} the bounds
	\begin{align*}
		(T^*-t)^{-\frac{1}{2} + s} \| u(t,\cdot) - u_{\alpha^*, \infty, \kappa^*, T^*, x_0}(t,\cdot)\|_{\dot{H}^{s}(B_{T^*-t}(x_0))} &= \left\|q_1\left(-\log \frac{T^*-t}{T^*}, \cdot\right)\right\|_{\dot{H}^s(B)} \\	
		&\le \epsilon_2 (T^*-t)^{1-\delta} ,
	\end{align*}
	for $s=0,1,\ldots,k+1$, and 
	\begin{align*}    
		(T^*-t)^{-\frac{1}{2} + s} \| \partial_t u(t,\cdot) - \partial_t u_{\alpha^*, \infty, \kappa^*, T^*, x_0}\|_{ \dot{H}^{s-1}(B_{T^*-t}(x_0))}  &= \left\|q_2\left(-\log \frac{T^*-t}{T^*}, \cdot\right)\right\|_{\dot{H}^s(B)} \\	
		&\le \epsilon_2 (T^*-t)^{1-\delta} ,
	\end{align*}
	for $s=1,\ldots,k+1$.
	
\end{proof}

\section*{Acknowledgment} The authors would like to thank Professor Jean-Pierre Eckmann and Professor Hatem Zaag for introducing this problem and for their invaluable discussions and insightful comments.

\appendix

\section{Frobenius analysis}
Recall that the eigen-equation \eqref{eigen-eq} can be rewritten as the standard form
\begin{align}\label{eigen-eq-z-appendix}
	\varphi'' + \left[\frac{\gamma}{z}+ \frac{\delta}{z-1}  + \frac{\epsilon}{z-d}\right] \varphi' + \frac{abz-c}{z(z-1)} \varphi = 0,
\end{align}
where we take $\gamma = \lambda -\sqrt{1+\alpha}, \delta = \lambda +\sqrt{1+\alpha}, d = -\frac{\sqrt{1+\alpha}-1}{2}, a=\lambda, b = \lambda+1, c= -\frac12 (\lambda^2+\lambda) (\sqrt{1+\alpha}-1), \epsilon = 2 = \alpha+\beta +1 -\gamma-\delta.$ 

By Frobenius theory, we can always find an infinite series \textit{local} solution of the form
\begin{align}\label{series-exp}
	\varphi(z) = (z-z_0)^s \sum_{k=0}^\infty a_k (z-z_0)^k 
\end{align}
near each singular point $z_0\in \{0,1\}$. More precisely, by substituting \eqref{series-exp} into \eqref{eigen-eq-z-appendix} and matching the lowest order term, we have the following indicial polynomials 
\begin{align*}
	P_0(s) = s(s-1+\gamma) =0 \text{~for~} z_0 =0, 
\end{align*}
and
\begin{align*}
	P_1(s) = s(s-1+\delta) =0 \text{~for~} z_0 =1. 
\end{align*}

\begin{prop}\label{prop-1dsol-space}
	Let $\lambda \in \mathbb{C}$ with $\operatorname{Re}\lambda > -1,$ and $k\ge k_\alpha+2$, then any $H^{k+1}(0,1)$ solution to \eqref{eigen-eq-z-appendix} must belong to $C^{\infty}[0,1]$. Moreover, the set of local smooth solutions to \eqref{eigen-eq-z-appendix} around $1$ is one-dimensional.
\end{prop}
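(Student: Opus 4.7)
The plan is to combine standard interior regularity with a careful Frobenius analysis at the two endpoints $z=0$ and $z=1$. On the open interval $(0,1)$, the coefficients of \eqref{eigen-eq-z-appendix} are real-analytic (the third singular point $d=-(\sqrt{1+\alpha}-1)/2$ lies outside $[0,1]$ since $\alpha>0$), so any $H^{k+1}(0,1)$ distributional solution is classical and in fact real-analytic on $(0,1)$. The task therefore reduces to establishing smoothness at the two endpoints and counting the analytic branches near $z=1$.

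Next I would apply Lemma \ref{lem:Frobenius} at each endpoint and track which branches lie in $H^{k+1}$. The indicial roots are $\{0,\,1-\lambda+\sqrt{1+\alpha}\}$ at $z=0$ and $\{0,\,1-\lambda-\sqrt{1+\alpha}\}$ at $z=1$. The key auxiliary computation is that a Frobenius branch $(z-z_0)^s h(z)$ with $h$ analytic and $h(z_0)\neq 0$ belongs to $H^{k+1}$ locally at $z_0$ if and only if $s\in \mathbb{N}_0$ or $\operatorname{Re} s>k+\tfrac12$, while a branch containing a non-zero $(z-z_0)^{s_+}h_+(z)\log(z-z_0)$ term is never in $H^{k+1}$. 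Since $k\geq k_\alpha+2\geq \sqrt{1+\alpha}+1$, the quantity $k+\tfrac12$ dominates the real parts of both non-trivial indicial exponents on the half-plane $\operatorname{Re}\lambda>-1$, so the $\operatorname{Re} s>k+\tfrac12$ escape is unavailable.

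Applying this at $z=1$: the non-trivial exponent $s_-=1-\lambda-\sqrt{1+\alpha}$ satisfies $\operatorname{Re} s_-<2-\sqrt{1+\alpha}$, so the $s_-$-branch lies in $H^{k+1}$ only if $s_-\in \mathbb{N}_0$. Combined with $\operatorname{Re}\lambda>-1$, this forces $\sqrt{1+\alpha}<2$ and $s_-=0=s_+$, but then Lemma \ref{lem:Frobenius} guarantees $c\neq 0$, producing an unavoidable $\log(z-1)$ term that destroys $H^{k+1}$-regularity. In every other configuration (roots not differing by an integer, or differing by a positive integer) the analogous argument rules out either the $s_-$ branch or the logarithmic part of the second solution, as already carried out for the transformed equation in Cases~1'--3' of Section \ref{subsec:mode-stability}. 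Hence the only local $H^{k+1}$ solutions at $z=1$ are multiples of the single analytic Frobenius branch with exponent $0$, giving simultaneously the smoothness and the $1$-dimensionality claim. Repeating the same case analysis at $z=0$ completes the proof.

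The main technical point is the borderline situation where the two indicial roots at $z=1$ coincide or differ by a non-negative integer. Here the argument depends crucially on the last clause of Lemma \ref{lem:Frobenius} ensuring that the logarithmic coefficient $c$ is non-zero when $s_+=s_-$; I would just quote this and the fact that $\log(z-1)$ times any non-vanishing analytic factor fails to lie in $H^{k+1}$, rather than attempt an explicit computation of $c$.
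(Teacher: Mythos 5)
Your proposal is correct and follows essentially the same route as the paper's proof: interior analyticity away from the endpoints, Frobenius analysis at $z=0$ and $z=1$ with a case split on whether the indicial difference lies in $\mathbb{N}_0$, and the observation that only the exponent-$0$ analytic branch at $z=1$ survives in $H^{k+1}$, which yields both the smoothness and the one-dimensionality claims. Two minor points to tighten: the blanket statement that a branch with a nonzero logarithmic term is \emph{never} in $H^{k+1}$ is only true here because the exponents that can carry a logarithm are at most $k-1$ at $z=0$ and equal to $0$ at $z=1$, and the dominance $k+\tfrac12>2+\sqrt{1+\alpha}$ needed at $z=0$ requires the full bound $k\ge k_\alpha+2\ge\sqrt{1+\alpha}+2$ rather than the weaker $\sqrt{1+\alpha}+1$ you quote.
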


\begin{proof} We prove by performing Frobenius analysis at $z_0=0$ and $z_0=1$ respectively. For $z_0 =0$, the two roots of the corresponding indicial polynomial $P_0(s)$ are given by
	$$s_+ = 1+\sqrt{1+\alpha} -\lambda,\quad s_-= 0, \quad \textit{~if~}  -1< \operatorname{Re}\lambda \le 1+\sqrt{1+\alpha} $$
	and 
	$$s_+ =0, \quad s_-= 1+\sqrt{1+\alpha} -\lambda, \quad \textit{~if~} \operatorname{Re} \lambda > 1+\sqrt{1+\alpha}. $$
	Therefore, we have
	\begin{itemize}
		\item[Case 1]  if $-1< \operatorname{Re}\lambda \le 1+\sqrt{1+\alpha}$ and $ s_+ - s_{-}\notin \mathbb{N}_0$, by Lemma \ref{lem:Frobenius} the two independent local solutions are given by 
		\begin{align*}
			\varphi_{01} = z^{s_+} h_+(z) \quad \text{~and~} \quad \varphi_{02} = h_-(z),
		\end{align*}
		where $h_{\pm}$ are analytic functions around $0$. Note that $\varphi_{01}$ does not belong to $H^{k_\alpha+3}(0,1)$ since $s_+\notin \mathbb{N}_0$ and $\operatorname{Re} s_+ -k_\alpha-3 <-1$. Thus, any $H^{k+1}$ local solutions must be a multiple of $\varphi_{02}$, thus is smooth and analytic at $0$. 
		\item[Case 2] if $-1< \operatorname{Re}\lambda \le 1+\sqrt{1+\alpha}$ and $ s_+ - s_{-}\in \mathbb{N}_0$, the two independent local solutions are given by 
		\begin{align*}
			\varphi_{01} = z^{s_+} h_+(z) \quad \text{~and~} \quad \varphi_{02} = h_-(z) + c'z^{s_+} h_+(z) \log(z).
		\end{align*}
		where $h_{\pm}$ are analytic functions around $0$ and $c'\in \mathbb{C}$ is a (possibly vanishing) constant. Note that when $c'\neq 0$, $c'z^{s_+} h_+(z) \log(z)$ does not belong to $H^{k_\alpha+3}(0,1)$, so $\varphi_{02}\notin H^{k+1}(0,1)$. Thus, any $H^{k+1}$ local solution is a multiple of $\varphi_{01}$ unless $c'=0$. In either case, the $H^{k+1}$ local solution must be smooth and analytic at $0$.    
		\item[Case 3] if $\operatorname{Re} \lambda > 1+\sqrt{1+\alpha}$,
		the two independent local solutions are given by 
		\begin{align*}
			\varphi_{01} = h_+(z) \quad \text{~and~} \quad \varphi_{02} =  z^{s_-}h_-(z) + c' h_+(z) \log(z),
		\end{align*}
		where $h_{\pm}$ are analytic functions around $0$ and $c'\in \mathbb{C}$ is a (possibly vanishing) constant. Since $\operatorname{Re} s_{-} = 1+\sqrt{1+\alpha} -\operatorname{Re}\lambda<0$, $\varphi_{02}$ does not belong to $H^{k+1}(0,1)$ regardless the choice of $c'\in \mathbb{C}$. Thus, any $H^{k+1}(0,1)$ local smooth solution around $z_0=0$ is a multiple of $\varphi_{01}$, thus is smooth and analytic at $0$. 
	\end{itemize}
	
	Now, we consider $z_0 =1$, then the two roots of the corresponding indicial polynomial $P_1(s)$ are given by
	$$ s_+ =1 -\sqrt{1+\alpha} -\lambda, \quad s_{-} = 0, \quad \text{if~} -1< \operatorname{Re} \lambda \le 1 -\sqrt{1+\alpha}$$
	and
	$$ s_+ = 0,\quad  s_{-} = 1 -\sqrt{1+\alpha} -\lambda, \quad \text{if~} \operatorname{Re} \lambda > 1 -\sqrt{1+\alpha}. $$
	Therefore, we have
	\begin{itemize}
		\item[Case 1']  if  $-1< \operatorname{Re}\lambda \le 1-\sqrt{1+\alpha}$ and $ s_+ - s_{-}\notin \mathbb{N}_0$, by Lemma \ref{lem:Frobenius} the two independent local solutions are given by 
		\begin{align*}
			\varphi_{01} = (z-1)^{s_+} h_+(z-1) \quad \text{~and~} \quad \varphi_{02} = h_-(z-1),
		\end{align*}
		where $h_{\pm}$ are analytic functions around $0$. Note that $\varphi_{01}$ does not belong to $H^{k+1}(0,1)$ since $s_+\notin \mathbb{N}_0$ and $\operatorname{Re} s_+ - k-1 \le -1$ for all $\operatorname{Re}\lambda \ge -1$. Thus, any $H^{k+1}$ local solutions must be a multiple of $\varphi_{02}$, thus is smooth and analytic at $1$. 
		\item[Case 2'] if $ -1<\operatorname{Re}\lambda \le 1-\sqrt{1+\alpha}$ and $ s_+ - s_{-}\in \mathbb{N}_0$, then $s_{+} = s_{-} =0$, since $s_{+} \in \mathbb{N}_0$ and $\operatorname{Re} s_+ <1$ for all $\operatorname{Re}\lambda > -1$. Thus, the two independent local solutions are given by 
		\begin{align*}
			\varphi_{01} = h_+(z-1) \quad \text{~and~} \quad \varphi_{02} = h_-(z-1) + c' h_+(z-1) \log(z-1).
		\end{align*}
		where $h_{\pm}$ are analytic functions around $0$ and $c'\neq 0$. Note that $ h_+(z-1) \log(z-1)$ does not belong to $H^{k+1}(0,1)$, so $\varphi_{02}\notin H^{k+1}(0,1)$. Thus, any $H^{k+1}$ local solution must be a multiple of $\varphi_{01}$, thus is smooth and analytic at $1$.    
		\item[Case 3'] if $\operatorname{Re} \lambda > 1-\sqrt{1+\alpha}$,
		the two independent local solutions are given by 
		\begin{align*}
			\varphi_{01} = h_+(z-1) \quad \text{~and~} \quad \varphi_{02} =  (z-1)^{s_-}h_-(z-1) + c' h_+(z-1) \log(z-1),
		\end{align*}
		where $h_{\pm}$ are analytic functions around $0$ and $c'\in \mathbb{C}$ is a (possibly vanishing) constant. Since $\operatorname{Re} s_{-} = 1-\sqrt{1+\alpha} -\operatorname{Re}\lambda<0$, $\varphi_{02}$ does not belong to $H^{k+1}(0,1)$ regardless the choice of $c'\in \mathbb{C}$. Thus, any $H^{k+1}(0,1)$ local smooth solution around $z_0=0$ is a multiple of $\varphi_{01}$, thus is smooth and analytic at $1$. 
	\end{itemize}
	
	In summary, any $H^{k+1}(0,1)$ solution to \eqref{eigen-eq-z-appendix} must be smooth and analytic around $0$ and $1$. Since the eigen-equation \eqref{eigen-eq-z-appendix} is a second-order elliptic equation in $(0,1)$, any $H^{k+1}(0,1)$ solution must belong to $C^\infty(0,1)$, thus also  belongs to $C^\infty[0,1]$. Moreover, the set of local smooth solutions to \eqref{eigen-eq-z-appendix} around $1$ is one-dimensional.
\end{proof}

\section{Non-existence of other generalized eigenfunctions}

\begin{lem}\label{lem:generalized-eigenfunction} There does not exist functions $\mathbf{v}\in \mathcal{H}^{4}$ such that $\mathbf{L}_{3} \mathbf{v} = \mathbf{g}_{0,3}.$
\end{lem}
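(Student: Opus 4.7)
The plan is to reduce $\mathbf{L}_3 \mathbf{v} = \mathbf{g}_{0,3}$ to a scalar second-order ODE, solve it explicitly by variation of parameters, and exhibit an unavoidable $(1+y)^2 \log(1+y)$ term near $y = -1$ incompatible with the required $H^5$-regularity.

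First, writing $\mathbf{v} = (v_1, v_2)$, the first equation of the system gives $v_2 = y\partial_y v_1 + g_1(y)$ with $g_1(y) = -\log(2+y) - \tfrac{3}{4(2+y)}$. Substituting into the second equation eliminates $v_2$ and yields the scalar equation
\[
(1-y^2)\partial_{yy} v_1 - \Big(\tfrac{6}{2+y} + 2y\Big)\partial_y v_1 = F(y),
\]
with $F \in C^{\infty}[-1,1]$ explicitly computable. Setting $u := \partial_y v_1$, this becomes first order in $u$; a partial-fraction decomposition of $\tfrac{1}{(1-y)(1+y)(2+y)}$ (with simple poles at $1,-1,-2$) makes the integrating factor elementary and yields the homogeneous solution $u_h(y) = \tfrac{(1+y)^2}{(1-y)^2(2+y)^2}$. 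Variation of parameters then gives
\[
u(y) = u_h(y)\left(C + G(y)\right), \qquad G(y) := \int_0^y \tfrac{(1-z)(2+z)^2}{(1+z)^3}\,F(z)\,dz,
\]
parametrized by a single constant $C \in \mathbb{R}$.

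The heart of the argument is a Frobenius-type expansion at $y = -1$. With $s = 1+y$, the integrand of $G$ equals $\big(\tfrac{2}{s^3} + \tfrac{3}{s^2} - 1\big) F(-1+s)$, so Taylor-expanding $F$ produces an integrand of the form $\tfrac{a_{-3}}{s^3} + \tfrac{a_{-2}}{s^2} + \tfrac{a_{-1}}{s} + O(1)$ with $a_{-1} = F''(-1) + 3F'(-1)$. A direct (elementary but tedious) rational computation gives $F'(-1) = \tfrac{1}{4}$ and $F''(-1) = -\tfrac{15}{2}$, so $a_{-1} = -\tfrac{27}{4} \neq 0$. Integrating, $G(y)$ picks up a term $a_{-1}\log(s)$; multiplying by $u_h(y) = \tfrac{s^2}{4} + O(s^3)$, the polar terms $a_{-3}/s^2$ and $a_{-2}/s$ get absorbed into an analytic function of $s$, but a term $\tfrac{a_{-1}}{4}\,s^2 \log(s)$ survives in $u$, \emph{independently} of $C$ (which only contributes the analytic piece $C\,u_h$). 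Since $\partial_s^4 (s^2 \log s) \sim -2/s^2 \notin L^2$ near $s=0$, we conclude $u \notin H^4(-1,1)$, hence $v_1 \notin H^5(-1,1)$, contradicting $\mathbf{v} \in \mathcal{H}^4 = H^5 \times H^4$. The main obstacle is verifying that the logarithmic coefficient $a_{-1}$ is non-zero, which reduces to the explicit calculation of two derivatives of $F$ at $y = -1$; everything else is routine reduction of the first-order system to a scalar ODE, variation of parameters, and Frobenius expansion.
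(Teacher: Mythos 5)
Your proposal is correct and follows essentially the same route as the paper: you eliminate $v_2$, solve the resulting first-order ODE for $\partial_y v_1$ with the same homogeneous solution $\tfrac{(1+y)^2}{(1-y)^2(2+y)^2}$, and detect the non-removable $(1+y)^2\log(1+y)$ term near $y=-1$ whose coefficient (your $a_{-1}=F''(-1)+3F'(-1)=-\tfrac{27}{4}$, with $F'(-1)=\tfrac14$, $F''(-1)=-\tfrac{15}{2}$) matches the paper's Taylor expansion of $G$ and is incompatible with the required Sobolev regularity. The only cosmetic difference is that you note the logarithmic obstruction is independent of the free constant $C$ (which multiplies a function analytic at $y=-1$), so you bypass the paper's preliminary step of setting the constant to zero via the $L^2$-behavior near $y=1$.
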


\begin{proof}
	Assume that $\mathbf{v}=(v_1, v_2)   \in \mathcal{H}^{4}$ satisfies $\mathbf{L}_{3} \mathbf{v} = \mathbf{g}_{0,3},$ then we have
	\begin{align*}
		\left\{
		\begin{aligned}
			&v_2 -y\partial_y v_1 = g_{0,3,1}\\
			&\partial_{yy} v_1 -\frac{6}{2+y}\partial_y v_1 -v_2 -y\partial_y v_2 = g_{0,3,2}.
		\end{aligned}\right.
	\end{align*}
	Using the first equation to solve for $v_2$, we see that solving this system of ODEs reduces to
	\begin{align*}
		\left(2y+ \frac{6}{2+y} \right)\partial_y v_1  +(y^2-1)\partial_{yy} v_1 = g, 
	\end{align*} 
	where
	\begin{align*}
		g &= -g_{0,3,1} -y\partial_y g_{0,3,1} - g_{0,3,2}\\
		&= \log(2+y) + \frac{1}{(2+y)^2} \left(y^2 - \frac{3}{4} y - \frac{5}{2}\right).
	\end{align*}
	Thus, we have
	\begin{align*}
		\partial_y v_1 =c p(y)^{-1} +  p(y)^{-1} \int_y^1 \frac{p(z)g(z)}{1 -z^2} dz, \quad p(y) = \left(\frac{1-y}{1+y}\right)^{2}(2+y)^2,
	\end{align*}
	where $c\in\mathbb{R}$ is a constant. Since $v_1\in L^2(\frac{1}{2},1)$, we have $c=0$. 
	It suffices to show that $\partial_y v_1 \notin H^3(-1,-\frac{1}{2})$. Indeed, we have
	\begin{align*}
		(1-y)^2(2+y)^2\partial_y v_1 = (1+y)^2 \int_y^1 \frac{G(z)}{(1+z)^3} dz,
	\end{align*}
	where $G(z) = (1-z)\left( (2+z)^2 \log(2+z) + z^2 -\frac{3}{4}z - \frac52 \right)$. By Taylor expansion, we can write 
	$$ G(z) = -\frac32 - \frac{11}{4}(1+z) + \frac{27}{4}(1+z)^2 + R,  $$
	with $R = O((1+z)^3)$ for $z\to -1$. Then,
	\begin{align*}
		(1+y)^2 \int_y^1 \frac{G(z)}{(1+z)^3} dz& = \left(\frac{25}{16} + \frac{27}{4}\log 2\right)(1+y)^2-\frac{11}{4}(1+y) -\frac{3}{4} \\
		&- \frac{27}{4}(1+y)^2\log(1+y) + (1+y)^2  \int_y^1 \frac{R(z)}{(1+z)^3} dz.
	\end{align*} 
	Note that $(1+y)^2  \int_y^1 \frac{R(z)}{(1+z)^3} dz$ and $\frac{1}{(1-y)^2(2+y)^2} $ are smooth near $-1$, thus $\partial_y v_1 \notin H^3(-1,-\frac{1}{2})$ due to the term $- \frac{27}{4}(1+y)^2\log(1+y)$ above.

\end{proof}

\bibliographystyle{abbrv}
\bibliography{ref-nlw-p1}

\begin{thebibliography}{10}

\bibitem{chen2024co}
P.-N. Chen, R.~Donninger, I.~Glogi{\'c}, M.~McNulty, and B.~Sch{\"o}rkhuber.
\newblock Co-dimension one stable blowup for the quadratic wave equation beyond
  the light cone.
\newblock {\em Communications in Mathematical Physics}, 405(2):34, 2024.

\bibitem{chen2023singularity}
P.-N. Chen, M.~McNulty, and B.~Sch{\"o}rkhuber.
\newblock Singularity formation for the higher dimensional skyrme model in the
  strong field limit.
\newblock {\em arXiv preprint arXiv:2310.07042}, 2023.

\bibitem{collot2018type}
C.~Collot.
\newblock Type ii blow up manifolds for the energy supercritical semilinear
  wave equation.
\newblock {\em Memoirs of the American Mathematical Society}, 252(1205), 2018.

\bibitem{costin2017mode}
O.~Costin, R.~Donninger, and I.~Glogi{\'c}.
\newblock Mode stability of self-similar wave maps in higher dimensions.
\newblock {\em Communications in Mathematical Physics}, 351:959--972, 2017.

\bibitem{costin2016stability}
O.~Costin, R.~Donninger, I.~Glogi{\'c}, and M.~Huang.
\newblock On the stability of self-similar solutions to nonlinear wave
  equations.
\newblock {\em Communications in Mathematical Physics}, 343(1):299--310, 2016.

\bibitem{costin2016proof}
O.~Costin, R.~Donninger, and X.~Xia.
\newblock A proof for the mode stability of a self-similar wave map.
\newblock {\em Nonlinearity}, 29(8):2451, 2016.

\bibitem{cote2013construction}
R.~C{\^o}te and H.~Zaag.
\newblock Construction of a multisoliton blowup solution to the semilinear wave
  equation in one space dimension.
\newblock {\em Communications on Pure and Applied Mathematics},
  66(10):1541--1581, 2013.

\bibitem{csobo2024blowup}
E.~Csobo, I.~Glogi{\'c}, and B.~Sch{\"o}rkhuber.
\newblock On blowup for the supercritical quadratic wave equation.
\newblock {\em Analysis \& PDE}, 17(2):617--680, 2024.

\bibitem{dai2021self}
W.~Dai and T.~Duyckaerts.
\newblock Self-similar solutions of focusing semi-linear wave equations in
  $\mathbb{R}^{N}$.
\newblock {\em Journal of Evolution Equations}, 21:4703--4750, 2021.

\bibitem{donninger2011stable}
R.~Donninger.
\newblock On stable self-similar blowup for equivariant wave maps.
\newblock {\em Communications on Pure and Applied mathematics},
  64(8):1095--1147, 2011.

\bibitem{donninger2017strichartz}
R.~Donninger.
\newblock Strichartz estimates in similarity coordinates and stable blowup for
  the critical wave equation.
\newblock {\em Duke Mathematical Journal}, 166(9):1627--1683, 2017.

\bibitem{donninger2024spectral}
R.~Donninger.
\newblock Spectral theory and self-similar blowup in wave equations.
\newblock {\em Bulletin of the American Mathematical Society}, 61(4):659--685,
  2024.

\bibitem{donninger2020blowup}
R.~Donninger and Z.~Rao.
\newblock Blowup stability at optimal regularity for the critical wave
  equation.
\newblock {\em Advances in Mathematics}, 370:107219, 2020.

\bibitem{donninger2012stable}
R.~Donninger and B.~Sch{\"o}rkhuber.
\newblock Stable self-similar blow up for energy subcritical wave equations.
\newblock {\em Dynamics of Partial Differential Equations}, 9(1):63--87, 2012.

\bibitem{donninger2014stable}
R.~Donninger and B.~Sch{\"o}rkhuber.
\newblock Stable blow up dynamics for energy supercritical wave equations.
\newblock {\em Transactions of the American Mathematical Society},
  366(4):2167--2189, 2014.

\bibitem{donninger2016blowup}
R.~Donninger and B.~Sch{\"o}rkhuber.
\newblock On blowup in supercritical wave equations.
\newblock {\em Communications in Mathematical Physics}, 346:907--943, 2016.

\bibitem{donninger2017stable}
R.~Donninger and B.~Sch{\"o}rkhuber.
\newblock Stable blowup for wave equations in odd space dimensions.
\newblock In {\em Annales de l'Institut Henri Poincar{\'e} C, Analyse non
  lin{\'e}aire}, volume~34, pages 1181--1213. Elsevier, 2017.

\bibitem{donninger2012wavemap}
R.~Donninger, B.~Sch{\"o}rkhuber, and P.~C. Aichelburg.
\newblock On stable self-similar blow up for equivariant wave maps: the
  linearized problem.
\newblock In {\em Annales Henri Poincar{\'e}}, volume~13, pages 103--144.
  Springer, 2012.

\bibitem{donninger2023optimal}
R.~Donninger and D.~Wallauch.
\newblock Optimal blowup stability for supercritical wave maps.
\newblock {\em Advances in Mathematics}, 433:109291, 2023.

\bibitem{duyckaerts2012universality}
T.~Duyckaerts, C.~E. Kenig, and F.~Merle.
\newblock Universality of the blow-up profile for small type ii blow-up
  solutions of the energy-critical wave equation: the nonradial case.
\newblock {\em Journal of the European Mathematical Society}, 14(5):1389--1454,
  2012.

\bibitem{duyckaerts2024global}
T.~Duyckaerts and G.~Negro.
\newblock Global solutions with asymptotic self-similar behaviour for the cubic
  wave equation.
\newblock {\em Communications in Mathematical Physics}, 405(3):84, 2024.

\bibitem{eckmann2023instabilities}
J.-P. Eckmann, F.~Hassani, and H.~Zaag.
\newblock Instabilities appearing in cosmological effective field theories:
  when and how?
\newblock {\em Nonlinearity}, 36(9):4844, 2023.

\bibitem{engel2000one}
K.-J. Engel, R.~Nagel, and S.~Brendle.
\newblock {\em One-parameter semigroups for linear evolution equations}, volume
  194.
\newblock Springer, 2000.

\bibitem{ghoul2018construction}
T.~Ghoul, S.~Ibrahim, and V.~T. Nguyen.
\newblock Construction of type {II} blowup solutions for the 1-corotational
  energy supercritical wave maps.
\newblock {\em J. Differential Equations}, 265(7):2968--3047, 2018.

\bibitem{glogic2020threshold}
I.~Glogi{\'c}, M.~Maliborski, and B.~Sch{\"o}rkhuber.
\newblock Threshold for blowup for the supercritical cubic wave equation.
\newblock {\em Nonlinearity}, 33(5):2143, 2020.

\bibitem{glogic2021co}
I.~Glogi{\'c} and B.~Sch{\"o}rkhuber.
\newblock Co-dimension one stable blowup for the supercritical cubic wave
  equation.
\newblock {\em Advances in Mathematics}, 390:107930, 2021.

\bibitem{hassani2022new}
F.~Hassani, P.~Shi, J.~Adamek, M.~Kunz, and P.~Wittwer.
\newblock New nonlinear instability for scalar fields.
\newblock {\em Physical Review D}, 105(2):L021304, 2022.

\bibitem{hillairet2012smooth}
M.~Hillairet, P.~Rapha{\"e}l, et~al.
\newblock Smooth type ii blow-up solutions to the four-dimensional
  energy-critical wave equation.
\newblock {\em Anal. PDE}, 5(4):777--829, 2012.

\bibitem{kato1995}
T.~Kato.
\newblock {\em Perturbation theory for linear operators}.
\newblock Classics in Mathematics. Springer-Verlag, Berlin, 1995.
\newblock Reprint of the 1980 edition.

\bibitem{kim2022self}
J.~Kim.
\newblock Self-similar blow up for energy supercritical semilinear wave
  equation.
\newblock {\em arXiv preprint arXiv:2211.13699}, 2022.

\bibitem{krieger2020stability}
J.~Krieger.
\newblock On stability of type {II} blow up for the critical nonlinear wave
  equation on {$\mathbb{R}^{3+1}$}.
\newblock {\em Mem. Amer. Math. Soc.}, 267(1301):v + 129, 2020.

\bibitem{krieger2014full}
J.~Krieger and W.~Schlag.
\newblock Full range of blow up exponents for the quintic wave equation in
  three dimensions.
\newblock {\em Journal de Math{\'e}matiques Pures et Appliqu{\'e}es},
  101(6):873--900, 2014.

\bibitem{krieger2009slow}
J.~Krieger, W.~Schlag, and D.~Tataru.
\newblock Slow blow-up solutions for the $h^1(\mathbb{R}^3)$ critical focusing
  semilinear wave equation.
\newblock {\em Duke Mathematical Journal}, 147(1):1, 2009.

\bibitem{mcnulty2024singularity}
M.~McNulty.
\newblock Singularity formation for the higher dimensional skyrme model.
\newblock {\em arXiv preprint arXiv:2408.15345}, 2024.

\bibitem{MRR22}
F.~Merle, P.~Rapha\"el, I.~Rodnianski, and J.~Szeftel.
\newblock On blow up for the energy super critical defocusing nonlinear
  {S}chr\"odinger equations.
\newblock {\em Invent. Math.}, 227(1):247--413, 2022.

\bibitem{merle2003determination}
F.~Merle and H.~Zaag.
\newblock Determination of the blow-up rate for the semilinear wave equation.
\newblock {\em American journal of mathematics}, 125(5):1147--1164, 2003.

\bibitem{merle2005determination}
F.~Merle and H.~Zaag.
\newblock Determination of the blow-up rate for a critical semilinear wave
  equation.
\newblock {\em Mathematische Annalen}, 331(2):395--416, 2005.

\bibitem{merle2007existence}
F.~Merle and H.~Zaag.
\newblock Existence and universality of the blow-up profile for the semilinear
  wave equation in one space dimension.
\newblock {\em Journal of Functional Analysis}, 253(1):43--121, 2007.

\bibitem{merle2008openness}
F.~Merle and H.~Zaag.
\newblock Openness of the set of non-characteristic points and regularity of
  the blow-up curve for the 1 d semilinear wave equation.
\newblock {\em Communications in mathematical physics}, 282(1):55--86, 2008.

\bibitem{merle2012existence}
F.~Merle and H.~Zaag.
\newblock Existence and classification of characteristic points at blow-up for
  a semilinear wave equation in one space dimension.
\newblock {\em American Journal of Mathematics}, 134(3):581--648, 2012.

\bibitem{merle2012isolatedness}
F.~Merle and H.~Zaag.
\newblock Isolatedness of characteristic points at blowup for a 1-dimensional
  semilinear wave equation.
\newblock {\em Duke Mathematical Journal}, 161(15):2837--2908, 2012.

\bibitem{merle2015stability}
F.~Merle and H.~Zaag.
\newblock On the stability of the notion of non-characteristic point and
  blow-up profile for semilinear wave equations.
\newblock {\em Communications in Mathematical Physics}, 333(3):1529--1562,
  2015.

\bibitem{merle2016dynamics}
F.~Merle and H.~Zaag.
\newblock Dynamics near explicit stationary solutions in similarity variables
  for solutions of a semilinear wave equation in higher dimensions.
\newblock {\em Transactions of the American Mathematical Society},
  368(1):27--87, 2016.

\bibitem{olver2010nist}
F.~W. Olver, D.~W. Lozier, R.~F. Boisvert, and C.~W. Clark.
\newblock {\em NIST handbook of mathematical functions}.
\newblock Cambridge university press, 2010.

\bibitem{ostermann2024stable}
M.~Ostermann.
\newblock Stable blowup for focusing semilinear wave equations in all
  dimensions.
\newblock {\em Transactions of the American Mathematical Society}, 2024.

\bibitem{rammaha1995upper}
M.~Rammaha.
\newblock Upper bounds for the life span of solutions to systems of nonlinear
  wave equations in two and three space dimensions.
\newblock {\em Nonlinear Analysis: Theory, Methods \& Applications},
  25(6):639--654, 1995.

\bibitem{rammaha1997note}
M.~Rammaha.
\newblock A note on a nonlinear wave equation in two and three space
  dimensions.
\newblock {\em Communications in Partial Differential Equations},
  22(5-6):799--810, 1997.

\bibitem{sasaki2023lifespan}
T.~Sasaki, S.~Takamatsu, and H.~Takamura.
\newblock The lifespan of classical solutions of one dimensional wave equations
  with semilinear terms of the spatial derivative.
\newblock {\em AIMS Mathematics}, 8(11):25477--25486, 2023.

\bibitem{shao2024blow}
K.~Shao, H.~Takamura, and C.~Wang.
\newblock Blow-up of solutions to semilinear wave equations with spatial
  derivatives.
\newblock {\em arXiv preprint arXiv:2406.02098}, 2024.

\bibitem{Teschl2012}
G.~Teschl.
\newblock {\em Ordinary differential equations and dynamical systems}, volume
  140 of {\em Graduate Studies in Mathematics}.
\newblock American Mathematical Society, Providence, RI, 2012.

\bibitem{wallauch2023strichartz}
D.~Wallauch.
\newblock Strichartz estimates and blowup stability for energy critical
  nonlinear wave equations.
\newblock {\em Transactions of the American Mathematical Society},
  376(06):4321--4360, 2023.

\end{thebibliography}

\end{document}